\DeclareMathOperator{\Tr}{Tr}
\newcommand{\F}{{\mathbb F}}
\newcommand{\ie}{i.e., }
\newcommand{\ea}{{\em et al. }}
\newcommand*\samethanks[1][\value{footnote}]{\footnotemark[#1]}
\begin{document}

\title{Fibre Products of Supersingular Curves and the Enumeration of Irreducible
Polynomials with Prescribed Coefficients}

\author{Omran Ahmadi\inst{1} \and Faruk G\"olo\u{g}lu\inst{2} \and Robert Granger\thanks{Corresponding author.}\inst{3}\thanks{Research supported by the Swiss National Science Foundation via grant number 200021-156420.} 
  \and Gary McGuire\inst{4}\thanks{Research supported by Science Foundation Ireland Grant 13/IA/1914.} \and Emrah Sercan Yilmaz\inst{4}\samethanks}
  
\institute{School of Mathematics, Institute for Research in Fundamental Sciences (IPM), Tehran, Iran\\
\email{oahmadid@ipm.ir}
\and
Department of Algebra, Charles University Prague\\
\email{farukgologlu@gmail.com}
\and 
School of Computer and Communication Sciences, \'Ecole polytechnique f\'ed\'erale de Lausanne, Switzerland\\
\email{robert.granger@epfl.ch}
\and
School of Mathematics and Statistics, University College Dublin, Ireland\\
\email{gary.mcguire@ucd.ie,emrahsercanyilmaz@gmail.com}}

\maketitle

\begin{abstract}
For any positive integers $n\geq 3, r\geq 1$ we present formulae for the number of 
irreducible polynomials of degree $n$ over the finite field $\F_{2^r}$ where the coefficients of $x^{n-1}$, $x^{n-2}$ and $x^{n-3}$ are zero. 
Our proofs involve counting the number of points on certain algebraic curves over finite fields, a technique
which arose from Fourier-analysing the known formulae for the $\F_2$ base field cases, reverse-engineering an economical new proof and then
extending it. This approach gives rise to fibre products of supersingular curves and makes explicit why the formulae have period $24$ in $n$. 
\end{abstract}

\begin{keywords}
Supersingular curves, irreducible polynomials, prescribed coefficients, binary fields, characteristic polynomial of Frobenius. MSC: 12Y05, 14H99
\end{keywords}


\section{Introduction}\label{sec:intro}

For $p$ a prime and $r \ge 1$ let $\F_q$ denote the finite field of $q = p^r$ elements. 
The number of monic irreducible polynomials in $\F_q[x]$ of degree $n$ is
usually denoted $I_q(n)$, and the formula for $I_q(n)$ is a classical result due to Gauss~\cite[pp. 602-629]{gauss}.
A natural extension problem is to determine the 
number of monic irreducible polynomials in $\F_q[x]$ of degree $n$ for which certain coefficients are prescribed. 
An interesting set of subproblems here consists of counting the number of monic irreducible polynomials of degree $n$ for which the first $l$ coefficients 
have the prescribed values $t_1,\ldots,t_l$, while the remaining coefficients are arbitrary, \ie 
considering polynomials of the form
\[
x^n + t_1 x^{n-1} + \cdots + t_l x^{n-l} + a_{l+1} x^{n-l-1} + \cdots + a_{n-1}x + a_n,
\]
whose number we denote by $I_q(n,t_1,\ldots,t_l)$. 
While the asymptotics for such subproblems have been obtained by Cohen~\cite{Cohen}, several exact results are known.
Carlitz gave formulae for $I_q(n,t_1)$ in 1952~\cite{Carlitz}, and
Kuz'min gave formulae for $I_q(n,t_1,t_2)$ in 1990~\cite{kuzmin1,kuzmin2}. 
For the base field $\F_2$, Cattell \ea reproduced Kuz'min's results in 
1999~\cite{cattell}, while in 2001 Yucas and Mullen computed $I_2(n,t_1,t_2,t_3)$ for $n$ even~\cite{yucasmullen} and Fitzgerald and Yucas
computed $I_2(n,t_1,t_2,t_3)$ for $n$ odd~\cite{fitzyucas}. In 2013 Ri \ea gave formulae for 
$I_{2^r}(n,t_1,t_2)$ for all $r \ge 1$~\cite{RMR}.
In this paper we give a formula for $I_{2^r}(n,0,0,0)$ for all $r \ge 1$.

While our aim is to count the number of monic irreducible polynomials with certain coefficients prescribed, it turns out to be easier 
(and arguably more interesting) to count the number of elements of the relevant fields with correspondingly prescribed traces; a standard M\"obius inversion-type argument allows one to switch between the two. Indeed,~\cite{cattell,yucasmullen,fitzyucas,RMR} all take this approach.
With this in mind, for $r \ge 1$ let $q = 2^r$. For $a \in \F_{q^n}$, the characteristic polynomial of $a$ w.r.t. the extension $\F_{q^n}/\F_q$ is 
defined to be: 
\begin{equation}\label{characteristicpoly}
\prod_{i = 0}^{n-1} (x + a^{q^i}) = x^n + a_{n-1}x^{n-1} + \cdots + a_1x + a_0.
\end{equation}
The coefficient $a_{n-1}$ is known as the trace of $a$, while $a_{n-2}$ is known as the subtrace. We are concerned with the first three traces, 
which we henceforth denote by $T_1,T_2$ and $T_3$.
By~(\ref{characteristicpoly}) they are given by the following expressions:
\begin{eqnarray}
\nonumber T_1(a) &=& \sum_{i = 0}^{n-1} a^{q^i},\\
\nonumber T_2(a) &=& \sum_{0 \le i < j \le n-1} a^{q^i + q^j},\\
\nonumber T_3(a) &=& \sum_{0 \le i < j <k \le n-1} a^{q^i + q^j + q^k}.
\end{eqnarray}
For $t_1 \in \F_q$, let $F_q(n,t_1)$ be the number of elements $a \in \F_{q^n}$ for which $T_1(a) = t_1$. 
Similarly, for $t_1,t_2 \in \F_q$ let $F_q(n,t_1,t_2)$ be the number of elements $a \in \F_{q^n}$ for which
$T_1(a) = t_1$ and $T_2(a) = t_2$. Lastly, for $t_1,t_2,t_3 \in \F_q$ let $F_q(n,t_1,t_2,t_3)$ be the number 
of elements $a \in \F_{q^n}$ for which $T_1(a) = t_1, T_2(a) = t_2$ and $T_3(a) = t_3$. 

The exact counts of elements with prescribed traces found in~\cite{cattell,yucasmullen,fitzyucas,RMR} are intriguing since they depend on $n \bmod 8$ in the two trace ($l=2$) cases and on $n \bmod 24$ in the three trace ($l=3$) cases. In this work we give a new explanation for this phenomenon. It happens because the formulae are derived from the number of 
rational points on certain curves -- and the important point is that these curves are supersingular. The supersingularity implies that the 
roots of the Weil polynomials are roots of unity. The presence of these roots of unity explains the periodicity.

The algebraic curves that arise in this paper have a certain form, namely
$S(y)=T(x)$ where $T(x)=xR_1(x)+(xR_2(x))^2+\cdots +(xR_m(x))^{2^{m-1}}$
and each $S, R_i$ is a 2-linearised polynomial.
Such curves have been studied by van der Geer-van der Vlugt~\cite{Geer1995}.
Following them, we show that our curves can be viewed as fibre products of simpler curves,
and we compute the trace of Frobenius for the simpler curves.

Finding exact formulae for $F_q(n,t_1,\ldots,t_l)$ when $l > 3$ is an open and apparently much harder problem. We note that Koma has given 
approximations for these counts in the case $q = 2$ and $l = 4$, when $n \equiv 0,2 \pmod{4}$~\cite{Koma}. In forthcoming work by the third 
author, by using a similar curve-based approach it is shown that the formulae for when more than three coefficients are prescribed 
are in general not periodic, as they do not arise from supersingular curves alone. This fact explains why previous approaches to this problem -- which did not use our curve-based approach -- have failed to make any progress in the past $15$ years. Thus, in the present work we explain the periodicity of known formulae, extend this to larger base fields, and lay the foundations for further developments in this area.

The paper is organised as follows. In Section~\ref{sec:irreduciblecounts} we present the formula for $I_q(n,0,0,0)$ in
terms of $F_q(n,0,0,0)$, then in Section~\ref{backg} we provide some background on curves and their Jacobians.
In Section~\ref{sec:motivation} we motivate our approach by considering the $\F_2$ base field case with 
the first two traces specified. We then extend this and consider the same base field but with 
the first three traces specified, in Section~\ref{sec:3coeffsF2}. In Sections~\ref{sec:2r}, \ref{sec:rationalpoints} and~\ref{sec:explicit},
we compute $F_q(n,0,0,0)$ for all finite binary fields, and make some concluding remarks in Section~\ref{sec:conclusion}.



\section{Computing $I_q(n,0,0,0)$ from $F_q(n,0,0,0)$}\label{sec:irreduciblecounts}

In this section we express $I_q(n,0,0,0)$ in terms of $F_q(n,0,0,0)$, using simple extensions of results from~\cite{cattell} and~\cite{yucasmullen}. 
Recall that $I_q(n)$ is the number of irreducible polynomials of degree $n$ in $\F_q[x]$ and $I_q(n,t_1)$ is the number of irreducible polynomials 
of degree $n$ in $\F_q[x]$ with first coefficient $t_1$. Furthermore, let $\mu(\cdot)$ be the M\"obius function, and for a proposition $P$ let 
$[P]$ denote its truth value, \ie $[P] = 1$ if $P$ is true and $0$ if $P$ is false, with $1$ and $0$ interpreted as integers. The 
formula is given by the following theorem.
\begin{theorem}\label{thm:irreducibles}
Let $n \ge 3$. Then
\begin{eqnarray}
\label{thm91} I_q(n,0,0,0) &=& \frac{1}{n} \sum_{\substack{d \mid n \\ d \ \emph{odd}}} \mu(d) \big( F_q(n/d,0,0,0) - [n \ \emph{even}]q^{n/2d - 1} \big)\\
\label{thm92}             &=& \frac{1}{n} \sum_{\substack{d \mid n \\ d \ \emph{odd}}} \mu(d) F_q(n/d,0,0,0) - [n \ \emph{even}]I_{\sqrt{q}}(n,1). 
\end{eqnarray}
\end{theorem}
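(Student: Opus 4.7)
This will extend the Möbius-inversion arguments from~\cite{cattell,yucasmullen}. Every $a\in\F_{q^n}$ has a minimal polynomial $g$ over $\F_q$ of some degree $m\mid n$, and since each root of $g$ appears $n/m$ times in~(\ref{characteristicpoly}) the characteristic polynomial of $a$ equals $g^{n/m}$. Hence the conditions $T_1(a)=T_2(a)=T_3(a)=0$ depend only on $g$ and on $k:=n/m$, and since each irreducible $g$ of degree $m$ supplies exactly $m$ roots to $\F_{q^n}$ one has
\[
F_q(n,0,0,0) \;=\; \sum_{m\mid n} m\cdot N_q(m,n/m),
\]
where $N_q(m,k)$ counts monic irreducibles $g$ of degree $m$ whose $k$-th power has vanishing coefficients of $x^{n-1},x^{n-2},x^{n-3}$.

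To identify $N_q(m,k)$, I would expand $g^k$ with $g=x^m+c_1 x^{m-1}+c_2 x^{m-2}+c_3 x^{m-3}+\cdots$ and collect the three target coefficients as $kc_1$, $\binom{k}{2}c_1^2+kc_2$, and $\binom{k}{3}c_1^3+k(k-1)c_1 c_2+kc_3$. In characteristic two $k(k-1)$ is always even, and by Lucas's theorem the residues $\binom{k}{2},\binom{k}{3}\bmod 2$ are determined by the two lowest bits of $k$. A split on $k\bmod 4$ then yields: $k$ odd forces $c_1=c_2=c_3=0$, so $N_q(m,k)=I_q(m,0,0,0)$; $k\equiv 2\pmod{4}$ forces only $c_1=0$, so $N_q(m,k)=I_q(m,0)$; and $k\equiv 0\pmod{4}$ imposes nothing, so $N_q(m,k)=I_q(m)$. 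The small-$m$ edge cases ($m\in\{1,2\}$) are absorbed by treating absent $c_i$ as zero.

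Write $n=2^s n'$ with $n'$ odd; the three cases are indexed by $v_2(m)=s$, $v_2(m)=s-1$, and $v_2(m)\le s-2$. I would then invoke the elementary identities $\sum_{m\mid N}m\,I_q(m)=q^N$ and its trace-zero refinement $\sum_{m\mid N,\,v_2(m)=v_2(N)}m\,I_q(m,0)=q^{N-1}-[N\text{ even}]\,q^{N/2}$, the latter following from the former together with the fact that $\F_{q^N}$ has $q^{N-1}$ trace-zero elements while in characteristic two the trace of $a$ vanishes automatically whenever $v_2(\deg g)<v_2(N)$. Applied with $N=n/2$ and $N=n/4$ respectively, the two lower-slice sums telescope precisely into $[n\text{ even}]\,q^{n/2-1}$, isolating
\[
\sum_{m'\mid n'} 2^s m'\,I_q(2^s m',0,0,0) \;=\; F_q(n,0,0,0) - [n\text{ even}]\,q^{n/2-1}.
\]

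Since the same argument applied to $2^s N'$ for any odd $N'\mid n'$ yields the analogous identity, I would divide by $2^s$ and apply standard Möbius inversion over the divisor lattice of $n'$ — which is precisely the set of odd divisors of $n$ — to obtain~(\ref{thm91}). The equivalence with~(\ref{thm92}) then follows by recognising the correction sum $\frac{1}{n}\sum_{d\mid n,\,d\text{ odd}}\mu(d)\,q^{n/(2d)-1}$ as the classical closed form for prescribed-trace counts that equals $I_{\sqrt q}(n,1)$. The main conceptual point, and the chief bookkeeping obstacle, is the uniform collapse of the lower-slice contributions to $q^{n/2-1}$ for every $s\ge 1$; this uniform cancellation is exactly why the final formula admits a single correction term governed only by the parity of $n$, rather than by its full $2$-adic valuation.
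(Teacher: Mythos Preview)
Your proposal is correct and follows essentially the same strategy as the paper: decompose $F_q(n,0,0,0)$ according to the degree $m$ of the minimal polynomial, classify by $k=n/m\bmod 4$ (the paper writes $d$ for your $k$), show that the $k\equiv 0,2\pmod 4$ contributions collapse to $[n\text{ even}]\,q^{n/2-1}$, and then M\"obius-invert over the odd divisors. The only noteworthy difference is in how the collapse is proved: the paper packages it as a standalone lemma evaluating $B(n)=\sum_{d\equiv 0}\frac{n}{d}I_q(n/d)+\sum_{d\equiv 2}\frac{n}{d}I_q(n/d,0)$ by first computing an auxiliary $A(n)=(q-1)\sum_{d\equiv 2}\frac{n}{d}I_q(n/d,1)$ via Carlitz's formula and then using $A(n)+B(n)=q^{n/2}$, whereas you obtain the same result by the trace-zero refinement $\sum_{m\mid N,\ v_2(m)=v_2(N)}m\,I_q(m,0)=q^{N-1}-[N\text{ even}]\,q^{N/2}$ applied at $N=n/2$ and telescoped against $\sum_{m\mid n/4}m\,I_q(m)=q^{n/4}$. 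Your route is slightly more direct and avoids invoking Carlitz at this stage; both are short.
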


For $\beta \in \F_{q^n}$ let $p = \text{Min}(\beta)$ denote the minimum polynomial of $\beta$ over $\F_q$, which has degree $n/d$ for some $d \mid n$.
Note that $T_i(\beta)$ is the coefficient of $x^{n-i}$ in $p^d$~\cite[Lemma 2]{cattell}, and so abusing notation slightly we also write $T_i(\beta)$ as 
$T_i(p^d)$.
We use the following easy lemma, which was stated for the $q = 2$ case in~\cite[Prop. 1]{yucasmullen}.
\begin{lemma}
For each integer $d \ge 1$ and $p(x) \in \F_q[x]$,
\begin{enumerate}
\item $T_1(p^d) = d T_1(p)$
\item $T_2(p^d) = \binom{d}{2} T_1(p) + d T_2(p)$
\item $T_3(p^d) = \binom{d}{3} T_1(p) + d T_3(p)$
\end{enumerate}
\end{lemma}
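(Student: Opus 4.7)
The approach is a direct computation at the level of roots. Write $p(x)=\prod_{i=1}^{m}(x+\alpha_{i})$ over an algebraic closure of $\F_{q}$ with $m=\deg p$; then $p^{d}(x)=\prod_{i=1}^{m}(x+\alpha_{i})^{d}$, and in characteristic $2$ the coefficient $T_{k}(p^{d})$ equals the $k$-th elementary symmetric polynomial $e_{k}$ of the multiset in which each $\alpha_{i}$ appears $d$ times. The proof therefore reduces to expressing this multiset-$e_{k}$ in terms of $T_{1}(p),T_{2}(p),T_{3}(p)$ for $k=1,2,3$.

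Part (1) is immediate: every position in the multiset contributes once, giving $dT_{1}(p)$. For part (2) I would stratify an unordered pair of positions by whether they sit over a common underlying index $i$ (contributing $\binom{d}{2}$ pairs, each giving $\alpha_{i}^{2}$) or over distinct indices (contributing $d^{2}$ pairs, each giving $\alpha_{i}\alpha_{j}$), yielding $\binom{d}{2}\sum_{i}\alpha_{i}^{2}+d^{2}T_{2}(p)$. Frobenius in characteristic $2$ gives $\sum_{i}\alpha_{i}^{2}=(\sum_{i}\alpha_{i})^{2}=T_{1}(p)^{2}$, and $d^{2}\equiv d\pmod{2}$ simplifies the second coefficient. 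Part (3) is analogous with three-position cases (all at a common index, two at one index and one at another, all at distinct indices), producing $\binom{d}{3}\sum_{i}\alpha_{i}^{3}+\binom{d}{2}d\sum_{i\neq j}\alpha_{i}^{2}\alpha_{j}+d^{3}T_{3}(p)$. Using the characteristic-$2$ Newton identity $p_{3}=e_{1}^{3}+e_{1}e_{2}+e_{3}$ together with $\sum_{i\neq j}\alpha_{i}^{2}\alpha_{j}=e_{1}p_{2}+p_{3}=e_{1}e_{2}+e_{3}$ collapses the middle case; the resulting coefficient of $T_{1}(p)T_{2}(p)$ is $\binom{d}{3}+d\binom{d}{2}=d(d-1)(2d-1)/3$, which is always even because $d(d-1)$ is, so this cross-term vanishes and $\binom{d}{3}T_{1}(p)^{3}+dT_{3}(p)$ remains.

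One small subtlety is that the natural formulae one obtains contain $T_{1}(p)^{2}$ and $T_{1}(p)^{3}$ rather than $T_{1}(p)$ as stated; when $T_{1}(p)\in\F_{2}$ — automatic in the original Yucas--Mullen setting $q=2$, and trivially true in the $T_{1}(p)=0$ case to which Theorem~\ref{thm:irreducibles} actually applies the lemma — these Frobenius powers collapse to $T_{1}(p)$ itself and the written formulae are recovered. No step presents a serious obstacle: the whole argument is combinatorial bookkeeping, two uses of Frobenius, and one invocation of Newton's identity, with the only real care required being the parity check on the cross-coefficient $\binom{d}{3}+d\binom{d}{2}$ in part (3).
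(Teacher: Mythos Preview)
The paper does not prove this lemma; it simply cites \cite[Prop.~1]{yucasmullen} for the $q=2$ case and declares the extension easy, so there is no argument to compare against. Your root-multiset computation is correct and is the natural way to establish it.

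Your observation at the end is more than a small subtlety: as written, parts (2) and (3) of the lemma are \emph{false} for $q>2$. For instance, with $q=4$, $p(x)=x+a$ for $a\in\F_4\setminus\F_2$, and $d=2$, one has $T_2(p^2)=a^2$ while the stated formula gives $\binom{2}{2}T_1(p)+2T_2(p)=a$. The correct identities over $\F_{2^r}$ are
\[
T_2(p^d)=\tbinom{d}{2}T_1(p)^2+dT_2(p),\qquad T_3(p^d)=\tbinom{d}{3}T_1(p)^3+dT_3(p),
\]
exactly as your computation produces. You are right that this does not affect the paper: in the derivation of $F_q(n,0,0,0)$ the $T_1(p)$ term only survives in the $d\equiv 2\pmod 4$ branch, where the condition becomes $T_1(p)^2=0$, equivalent to $T_1(p)=0$; and in the odd-$d$ branch one already has $T_1(p)=t_1=0$. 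So the downstream conclusions are unaffected, but the lemma itself needs the squared and cubed exponents for general $q$.

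One minor point: your parity argument for $\binom{d}{3}+d\binom{d}{2}=d(d-1)(2d-1)/3$ is correct but terse. The cleanest justification is that $\binom{d}{3}$ and $d\binom{d}{2}$ have the same parity (both odd iff $d\equiv 3\pmod 4$, as the paper itself notes just after the lemma), so their sum is even.
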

Note that $\binom{d}{i} \bmod{2}$ for $i = 1,2,3$ depends on $d \bmod{4}$. In particular, $\binom{d}{2}$ is even if and only if 
$d \equiv 0,1 \pmod{4}$ and $\binom{d}{3}$ is even if and only if $d \equiv 0,1,2 \pmod{4}$.
Throughout we therefore write $d \equiv a$ to denote $d \equiv a \pmod{4}$.
As in~\cite{cattell} let ${\bf Irr}(n)$ denote the set of all irreducible polynomials of degree $n$ over $\F_q$, 
and let $a \cdot {\bf Irr}(n)$ denote the multiset consisting of $a$ copies of ${\bf Irr}(n)$. 
As in~\cite{cattell} and~\cite{yucasmullen} we have:
\begin{eqnarray}
\nonumber F_q(n,t_1,t_2,t_3) &=& \Big\vert \bigcup_{\beta \in \F_{q^n}, T_1(\beta) = t_1, T_2(\beta) = t_2, T_3(\beta) = t_3} Min(\beta) \Big\vert\\
\nonumber                    &=& \Big\vert \bigcup_{d \mid n} \frac{n}{d} \big\{ p \in {\bf Irr}(\frac{n}{d}):  dT_1(p) = t_1, 
                                 \binom{d}{2}T_1(p) + d T_2(p) = t_2, \binom{d}{3}T_1(p) + d T_3(p) = t_3 \big\} \Big\vert \\
\nonumber                    &=& \Big\vert \bigcup_{d \mid n, \ d \equiv 0} \frac{n}{d} \big\{ p \in {\bf Irr}(\frac{n}{d}):  
                                 0 = t_1, 0 = t_2, 0 = t_3 \big\} \Big\vert \\
\nonumber        	            &+& \Big\vert \bigcup_{d \mid n, \ d \equiv 1} \frac{n}{d} \big\{ p \in {\bf Irr}(\frac{n}{d}):  
                                 T_1(p) = t_1, T_2(p) = t_2, T_3(p) = t_3 \big\} \Big\vert \\ 
\nonumber        	            &+& \Big\vert \bigcup_{d \mid n, \ d \equiv 2} \frac{n}{d} \big\{ p \in {\bf Irr}(\frac{n}{d}):  
                                 0 = t_1, T_1(p) = t_2, 0 = t_3 \big\} \Big\vert \\
\nonumber        	            &+& \Big\vert \bigcup_{d \mid n, \ d \equiv 3} \frac{n}{d} \big\{ p \in {\bf Irr}(\frac{n}{d}):  
                                 T_1(p) = t_1, T_1(p) + T_2(p) = t_2, T_1(p) + T_3(p) = t_3 \big\} \Big\vert . 
\end{eqnarray}
For $t_1 = t_2 = t_3 = 0$ we have
\begin{equation}\label{eq:F000}
F_q(n,0,0,0) = \sum_{\substack{d \mid n \\ d \equiv 0}} \frac{n}{d} I_q(\frac{n}{d}) 
+ \sum_{\substack{d \mid n \\ d \equiv 2}} \frac{n}{d} I_q(\frac{n}{d},0)
+ \sum_{\substack{d \mid n \\ d \ \text{odd}}} \frac{n}{d} I_q(\frac{n}{d},0,0,0)       
\end{equation}
To evaluate the sum of the first two terms of~(\ref{eq:F000}) we employ the following analogue of~\cite[Lemma 5]{cattell}.
\begin{lemma}
\[
\sum_{\substack{d \mid n \\ d \equiv 0}} \frac{n}{d} I_q(\frac{n}{d}) 
+ \sum_{\substack{d \mid n \\ d \equiv 2}} \frac{n}{d} I_q(\frac{n}{d},0) = [n \ \emph{even}]\, q^{n/2 - 1}.
\]
\end{lemma}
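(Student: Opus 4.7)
The plan is to give a direct bijective/counting interpretation of the left-hand side as the number of trace-zero elements in a subfield, rather than manipulating the individual summands.

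First I would dispose of the odd case: if $n$ is odd, then $n$ has no even divisors, so both sums on the LHS are empty and the indicator $[n \text{ even}]$ makes the RHS vanish as well.

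For $n$ even, I would reinterpret the LHS as a count of elements of $\F_{q^n}$. The quantity $\frac{n}{d} I_q(n/d)$ is the number of $\beta \in \F_{q^n}$ whose minimum polynomial over $\F_q$ has degree $n/d$, and $\frac{n}{d} I_q(n/d, 0)$ is the number of such $\beta$ additionally satisfying $T_1(\text{Min}(\beta)) = 0$. So the LHS counts those $\beta \in \F_{q^n}$ for which $d := n/\deg(\text{Min}(\beta))$ is even, subject to the extra constraint $T_1(\text{Min}(\beta)) = 0$ whenever $d \equiv 2 \pmod 4$. Since $d$ is even in both cases, $\deg(\text{Min}(\beta)) \mid n/2$, so all such $\beta$ lie in $\F_{q^{n/2}}$.

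The crux is to recognize these combined conditions as a single natural condition on $\F_{q^{n/2}}$. Writing $k = \deg(\text{Min}(\beta)) = n/d$, transitivity of trace gives
\[
T_{\F_{q^{n/2}}/\F_q}(\beta) \;=\; (d/2)\cdot T_1^{(k)}(\beta) \quad \text{in } \F_q,
\]
where $T_1^{(k)}(\beta) = T_1(\text{Min}(\beta))$. Splitting on the parity of $d/2$ in characteristic $2$: when $d \equiv 0 \pmod 4$, $d/2$ is even and the right-hand side vanishes for all such $\beta$; when $d \equiv 2 \pmod 4$, $d/2$ is odd and the right-hand side equals $T_1(\text{Min}(\beta))$, so the constraint $T_1(\text{Min}(\beta)) = 0$ is precisely $T_{\F_{q^{n/2}}/\F_q}(\beta) = 0$. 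Hence in both cases the LHS counts exactly the trace-zero elements of $\F_{q^{n/2}}$ relative to $\F_q$.

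Finally, since the trace $\F_{q^{n/2}} \to \F_q$ is surjective and $\F_q$-linear, the number of trace-zero elements is $q^{n/2}/q = q^{n/2-1}$, giving the RHS. There is no serious obstacle; the only delicate step is the transitivity computation, which relies on the simple but essential observation that the parity of $d/2$ (equivalently $d \bmod 4$) controls whether the inner trace survives in characteristic $2$.
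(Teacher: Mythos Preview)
Your argument is correct and is genuinely different from the paper's proof. The paper argues computationally: it introduces the auxiliary sum $A(n) = (q-1)\sum_{d\mid n,\,d\equiv 2}\frac{n}{d}I_q(n/d,1)$, uses Carlitz's formula to show $A(n)=(q-1)q^{n/2-1}$, then uses the identity $I_q(m,0)+(q-1)I_q(m,1)=I_q(m)$ together with Gauss's formula $\sum_{d\mid m} d\,I_q(d)=q^m$ to show $B(n)+A(n)=q^{n/2}$, and subtracts.

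Your proof bypasses both classical formulae entirely by giving a direct counting interpretation: you recognise the two sums as jointly enumerating those $\beta\in\F_{q^{n/2}}$ with $\mathrm{Tr}_{\F_{q^{n/2}}/\F_q}(\beta)=0$, via the transitivity relation $\mathrm{Tr}_{\F_{q^{n/2}}/\F_q}(\beta)=(d/2)\,\mathrm{Tr}_{\F_{q^k}/\F_q}(\beta)$ in characteristic~$2$. This is cleaner and more self-contained, and it explains structurally \emph{why} the two pieces of the LHS fit together: the $d\equiv 0$ part contributes all elements whose degree forces the half-trace to vanish automatically, while the $d\equiv 2$ part picks out exactly those remaining elements whose half-trace vanishes. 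The paper's route has the mild advantage of staying within the same formula-manipulation style as the surrounding M\"obius-inversion argument, but yours gives more insight for essentially no extra cost.
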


\begin{proof}
Recall the classical result of Gauss~\cite[pp. 602-629]{gauss}, and that of Carlitz~\cite{Carlitz}, for $t_1 \ne 0$, respectively:
\begin{equation}
\label{carlitzformula} I_q(n) = \frac{1}{n} \sum_{d \mid n} \mu(d) q^{n/d}, \hspace{3mm}
I_q(n,t_1) = \frac{1}{qn} \sum_{\substack{d \mid n \\ d \ \text{odd}}} \mu(d) q^{n/d},
\end{equation}
and consequently $I_q(n,0) = I_q(n) - (q-1)I_q(n,1)$.
Let 
\[
A(n) = (q-1) \sum_{\substack{d \mid n \\ d \equiv 2}} \frac{n}{d} I_q(\frac{n}{d},1)
\] 
and let 
\[
B(n) = \sum_{\substack{d \mid n \\ d \equiv 0}} \frac{n}{d} I_q(\frac{n}{d}) 
+ \sum_{\substack{d \mid n \\ d \equiv 2}} \frac{n}{d} I_q(\frac{n}{d},0).
\]
If $n$ is odd then the sums in $B(n)$ are empty and hence the Lemma is true. Therefore let $n$ be even.
By essentially the same argument given in the proof of~\cite[Lemma 5]{cattell}, we have $A(n) = (q-1)q^{n/2-1}$.
Similarly, we have
\begin{eqnarray}
\nonumber B(n) + A(n) &=& \sum_{\substack{d \mid n \\ d \equiv 0}} \frac{n}{d} I_q(\frac{n}{d}) 
                          + \sum_{\substack{d \mid n \\ d \equiv 2}} \frac{n}{d} I_q(\frac{n}{d},0) 
                          + (q-1) \sum_{\substack{d \mid n \\ d \equiv 2}} \frac{n}{d} I_q(\frac{n}{d},1) \\
\nonumber &=&  \sum_{\substack{d \mid n \\ d \ \text{even}}} \frac{n}{d} I_q(\frac{n}{d}) 
           = \sum_{d \mid \frac{n}{2}} \frac{n}{2d} I_q(\frac{n}{2d}) = q^{n/2}.
\end{eqnarray}
Hence $B(n) = q^{n/2} - A(n) = q^{n/2} - (q-1)q^{n/2 - 1} = q^{n/2-1}$. \qed
\end{proof}
Using this result,~(\ref{eq:F000}) becomes
\[
F_q(n,0,0,0) - [n \ \text{even}]\, q^{n/2 - 1} = \sum_{\substack{d \mid n \\ d \ \text{odd}}} \frac{n}{d} I_q(\frac{n}{d},0,0,0).
\]
Applying~\cite[Corollary 1]{cattell} with $\alpha(n) = n I_q(n,0,0,0)$ finally proves expression~(\ref{thm91}) of Theorem~\ref{thm:irreducibles}, and
applying Carlitz's result from~(\ref{carlitzformula}) gives~(\ref{thm92}).


\section{Background on curves and Jacobians}\label{backg}


Let $p$ be a prime number and let $q=p^r$.
Let $C$ be a curve of genus $g$ defined over $\mathbb{F}_q$.
All curves in this paper are projective plane curves, lying in $\mathbb{P}^2(k)$
where $k$ is an algebraic closure of $\mathbb{F}_q$.
As is common, we will work with affine versions of our curves and include the point at infinity in our point counts. 
Note that we do not include the point at infinity when counting the relevant $F_q(n,t_1,t_2,t_3)$.

We denote by $J_C$ the Jacobian 
of $C$, an abelian variety which as a group is isomorphic to
the divisor class group $\textrm{Pic}^0 (C)$
(the degree 0 divisors modulo the principal divisors).

The map $(x,y)\mapsto (x^q,y^q)$ on $C$  (defined over an
algebraic closure of $\mathbb{F}_q$) induces an endomorphism on $J_C$
called the Frobenius endomorphism.
The characteristic polynomial of the Frobenius endomorphism plays a key role.
The roots of this characteristic polynomial are called
Frobenius eigenvalues.

\subsection{Abelian varieties}
Suppose that $A$ and $B$ are Abelian varieties over the same field $K$.
A homomorphism $f : A\longrightarrow B$ is 
 a morphism that is also a group homomorphism. A homomorphism
is an isogeny over $K$ if $f$ is surjective and defined over $K$
and $\dim(A) = \dim(B)$.  If $A$ is an Abelian variety over $K$, 
$A$ is called simple (over $K$) if it
is not isogenous over $K$ to a product of lower-dimensional Abelian varieties.

Because $J_C$ is an abelian variety
we continue with some facts from the theory of abelian varieties.

Let $A$ be any abelian variety of dimension $g$ over $\mathbb{F}_q$. 
It can be shown that $A$ has a Frobenius endomorphism, and that
the characteristic polynomial $P_A(t)\in \mathbb{Z}[t]$ 
of the Frobenius endomorphism has the form
\begin{displaymath}
P_A(t) = t^{2g} + a_{1}t^{2g-1} + \cdots+ a_{g-1}t^{g+1}+
a_gt^g +qa_{g-1}t^{g-1}+ \cdots + a_1 q^{g-1} t+q^g.
\end{displaymath}
The Weil polynomial of an abelian variety over a finite field $\F_q$ is the
characteristic polynomial of its Frobenius endomorphism. 
The Weil polynomial of a curve over $\F_q$ is the Weil polynomial of its Jacobian.

The isogeny classes of abelian varieties
are completely classified by their Weil polynomials,
as the following theorem of Tate shows.

\begin{theorem}(Tate) \label{Tate} 
Let $A$ and $B$ be abelian varieties defined over $\mathbb{F}_q$. Then an
abelian variety $A$ is $\mathbb{F}_q$-isogenous to an abelian subvariety of
$B$ if and only if $P_A(t)$ divides $P_B(t)$ over $\mathbb{Q}[t]$. In
particular, $P_A(t) = P_B(t)$ if and only if $A$ and $B$ are
$\mathbb{F}_q$-isogenous.
\end{theorem}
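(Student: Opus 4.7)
The plan is to reduce to Tate's deep theorems on Tate modules of abelian varieties over finite fields. Fix a prime $\ell \neq p$ and work with the rational $\ell$-adic Tate modules $V_\ell A = T_\ell A \otimes_{\Z_\ell} \Q_\ell$ and $V_\ell B$, on which Frobenius acts with characteristic polynomials $P_A(t)$ and $P_B(t)$ respectively (a theorem of Weil). Two results of Tate are essential: (i) the isogeny theorem, which identifies $\text{Hom}(A,B) \otimes \Z_\ell$ with the $\Z_\ell$-module of Frobenius-equivariant homomorphisms $T_\ell A \to T_\ell B$; and (ii) semisimplicity of the Frobenius action on each $V_\ell$.

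For the forward direction, suppose $A$ is $\F_q$-isogenous to an abelian subvariety $A' \subseteq B$. The exact sequence $0 \to A' \to B \to B/A' \to 0$ of abelian varieties induces an exact sequence of Frobenius representations on the rational Tate modules, giving $P_B(t) = P_{A'}(t) \cdot P_{B/A'}(t)$. Since isogenous abelian varieties share a Weil polynomial, $P_{A'} = P_A$, so $P_A(t) \mid P_B(t)$.

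For the converse, assume $P_A \mid P_B$. Factor $P_B$ into distinct irreducibles over $\Q_\ell[t]$; semisimplicity then decomposes $V_\ell B$ as a direct sum of isotypic components indexed by these irreducibles, and similarly for $V_\ell A$. The divisibility $P_A \mid P_B$ translates into an inequality of isotypic multiplicities, which yields a Frobenius-equivariant injection $\iota : V_\ell A \hookrightarrow V_\ell B$. By Tate's isogeny theorem, $\iota$ lies in $\text{Hom}(A,B) \otimes \Q_\ell$. The subset of $\phi \in \text{Hom}(A,B) \otimes \Q_\ell$ whose induced $V_\ell$-map is injective is Zariski-open and contains $\iota$, so I approximate $\iota$ by an element of $\text{Hom}(A,B) \otimes \Q$ sharing this property; clearing denominators produces a genuine homomorphism $\phi : A \to B$ whose induced map on $V_\ell$ is still injective. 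Such a $\phi$ has finite kernel and is therefore an isogeny onto its image $A' = \phi(A) \subseteq B$, the desired abelian subvariety. The special case $P_A = P_B$ forces $\dim A = \dim B$, whence $A' = B$ and $A, B$ are $\F_q$-isogenous.

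The principal obstacle is entirely external: both Tate's isogeny theorem and the semisimplicity of the Frobenius action are deep results from his 1966 paper, with the isogeny theorem itself routed through the Tate conjecture for divisors on surfaces over finite fields. Granting these two inputs, the remaining work consists only of the linear-algebraic decomposition in the converse direction and the density argument lifting a $\Q_\ell$-point of $\text{Hom}(A,B) \otimes \Q_\ell$ to a $\Q$-point with the same injectivity property, both of which are routine.
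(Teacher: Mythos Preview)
The paper does not prove this theorem at all; it is quoted as background from Tate's work and used as a black box. Your sketch is a correct outline of the standard proof, reducing everything to Tate's 1966 results (the isogeny theorem and semisimplicity of Frobenius on $V_\ell$), and you are appropriately explicit that these two inputs carry all the depth. The density step is fine once made precise: $\mathrm{Hom}(A,B)$ is a free $\Z$-module of finite rank, the injectivity locus in $\mathrm{Hom}(A,B)\otimes\Q_\ell$ is Zariski-open and hence $\ell$-adically open, and $\mathrm{Hom}(A,B)\otimes\Q$ is $\ell$-adically dense there, so a rational approximation exists and clearing denominators yields a genuine $\phi$ with finite kernel.
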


\subsection{L-polynomials}

For a curve $C$ over $\F_q$ we
define for any $n\geq 1$ 
the number $N_n(C)$ of $\F_{q^n}$ rational points of $C$, 
and we encode these numbers
into the zeta function of $C$
\[
Z_C(t ) = exp \biggl( \sum_{n\geq 1} N_n (C)  \frac{t^n}{n} \biggr).
\]
It can be shown that the zeta function is a rational function of $t$
with denominator $(1-t)(1-qt)$.
The numerator of the zeta function of a curve $C$
is called the L-polynomial of $C$.
It can be shown that
the L-polynomial of $C$ is the reverse polynomial of the Weil polynomial of $J_C$, 
which we denote $P_C(t)$, i.e.,
\[
L_C (t)=t^{2g} P_{C} (1/t).
\]
Let $P_{C} (t)=\prod_{i=1}^{2g}(t-\eta_i)$, so
$L_{C} (t)=\prod_{i=1}^{2g}(1-\eta_i t)$.
 It can be shown that if $L_n(t)$ denotes the L-polynomial of 
 C over the extension $\F_{q^n}$,
then $L_n(t)=\prod_{i=1}^{2g} (1-\eta_i^n t)$.
We also have that
$N_n:=|C(\F_{q^n})|=q^n+1-\sum_{i=1}^{2g} \eta_i^n$ for all $n\geq 1$,
and the Riemann Hypothesis $|\eta_i|=\sqrt{q}$,
which together yield 
$|N_n -(q^n+1)| \leq 2g\sqrt{q^n}$ (the Hasse-Weil bound).

In this paper, all L-polynomials are for curves defined over $\F_2$ unless otherwise stated.

\subsection{Supersingularity}

An elliptic curve $E$ over $\F_q$  is called supersingular if its Weil polynomial
$t^2+at+q$ has the property that $a$ is divisible by $p$.
Equivalently, $E$ is supersingular if
$E(\overline{\mathbb{F}_q})$ has no points of order $p$.
 A curve $C$ of genus $g$ over a finite field $\mathbb{F}_q$ is called supersingular if its
Jacobian is a supersingular abelian variety, i.e.,
its Jacobian is isogenous (over $\overline{\mathbb{F}_q}$) to $E^g$, where $E$ is
a supersingular elliptic curve.
  
For a supersingular curve $C$ defined over a finite field $\mathbb{F}_q$,
it was shown by Oort that $\eta_i = \sqrt{q}\cdot \zeta_i$ 
where $\zeta_i$ is a (not necessarily $i$-th) root of unity, for all $i$.

\subsection{Maximal and minimal curves}

	Let $g$ be genus of the curve $C$ defined over $\mathbb F_{2^r}$ and let $\eta_1,\cdots,\eta_{2g}$ be the roots of reciprocal of the $L$- polynomial of $C$ over $\mathbb F_{2^r}$. Then the number of rational points of $C$ on $\mathbb F_{2^{rn}}$ is $$\#C(\mathbb F_{2^{rn}})=(2^{rn}+1)- \sum\limits_{i=1}^{2g}\eta_i^n.$$ 
	As stated earlier $|\eta_i|=\sqrt{2^r}$ for all $i=1,\cdots,2g$. We call $C(\mathbb F_{2^{rn}})$ maximal if $\eta_i^n=-\sqrt{2^{rn}}$ for all $i=1,\cdots,2g$, so the Hasse-Weil
	upper bound is met. 
	We call $C(\mathbb F_{2^{rn}})$ minimal if $\eta_i^n=\sqrt{2^{rn}}$ for all $i=1,\cdots,2g$.

A supersingular curve defined over $\F_q$ becomes maximal over some
finite extension of $\F_q$, because $\eta_i = \sqrt{q}\cdot \zeta_i$ 
where $\zeta_i$ is a (not necessarily $i$-th) root of unity, for all $i$.

	\begin{proposition} \label{minimal-prop}
		If $C(\mathbb F_{2^{rn}})$ is maximal or minimal for some $n \geq 1$ then  $C(\mathbb F_{2^{r(2n)}})$ is minimal. 
	\end{proposition}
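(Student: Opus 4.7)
The plan is to unpack the definitions and square the relevant equation relating the Frobenius eigenvalues over the extensions.

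First I would recall that the number of $\mathbb F_{2^{rn}}$-points is controlled by the reciprocal roots $\eta_1,\ldots,\eta_{2g}$ of the $L$-polynomial via the formula displayed just above the proposition, and that under both the maximal and minimal hypothesis \emph{every} $\eta_i$ is fixed to a single specific value of absolute value $\sqrt{2^r}$, namely $\eta_i^n=-\sqrt{2^{rn}}$ in the maximal case and $\eta_i^n=\sqrt{2^{rn}}$ in the minimal case. The key observation is that the statement about $C(\mathbb F_{2^{r(2n)}})$ only depends on $\eta_i^{2n}$, i.e., on the square of $\eta_i^n$, and squaring kills the sign.

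Concretely, I would argue: in either of the two hypothesised cases we have $\eta_i^n = \pm\sqrt{2^{rn}}$ for every $i$, so
\[
\eta_i^{2n} = (\eta_i^n)^2 = 2^{rn} = \sqrt{2^{r(2n)}}
\]
for every $i=1,\ldots,2g$. By the definition of minimality applied to the integer $2n$ in place of $n$, this is precisely the statement that $C(\mathbb F_{2^{r(2n)}})$ is minimal.

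There is essentially no obstacle; the only thing worth double-checking is the arithmetic identity $(\sqrt{2^{rn}})^2 = \sqrt{2^{r(2n)}}$, which is immediate since both sides equal $2^{rn}$, and the fact that the same eigenvalues $\eta_i$ govern the point counts over every extension of $\mathbb F_{2^r}$ (this was noted in the L-polynomial subsection, where it was stated that the L-polynomial over $\mathbb F_{q^n}$ has reciprocal roots $\eta_i^n$). Thus the proposition reduces to the tautology $(\pm x)^2 = x^2$ once the definitions are in place.
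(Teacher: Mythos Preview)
Your proof is correct and is essentially identical to the paper's own argument: the paper simply notes that $\eta_i^n=\pm\sqrt{2^{rn}}$ in either case, squares to get $\eta_i^{2n}=\sqrt{2^{r(2n)}}$, and invokes the definition of minimality. Your added remarks about the arithmetic identity and the fact that the same $\eta_i$ control all extensions are fine sanity checks but go slightly beyond what the paper bothers to record.
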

	\begin{proof}
		Since $\eta_i^n=\pm \sqrt{2^{rn}}$ for all $i=1,\cdots,2g$, we have  $\eta_i^{2n}=(\pm \sqrt{2^{rn}})^2=\sqrt{2^{r(2n)}}$ for all $i=1,\cdots,2g$. The proof follows by definition of a minimal curve. \qed
	\end{proof}
	
	We will need the following simple observations later. 

	\begin{lemma}\label{lemma-unity}
		The real numbers $1$ and $\sqrt 2$ cannot be written as a $\mathbb Q$-linear combination of primitive $16$-th roots of unity. Moreover,  $1$ cannot be written as a $\mathbb Q$-linear combination of primitive $8$-th roots of unity.
	\end{lemma}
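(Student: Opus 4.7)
My plan is to reduce every statement to a linear independence question in the cyclotomic field $\mathbb{Q}(\zeta_{16})$ (respectively $\mathbb{Q}(\zeta_8)$) and then read off a contradiction from coefficients in the standard power basis. Since $\Phi_{16}(x)=x^8+1$, the set $\{1,\zeta_{16},\zeta_{16}^2,\ldots,\zeta_{16}^7\}$ is a $\mathbb{Q}$-basis of $\mathbb{Q}(\zeta_{16})$, and since $\Phi_8(x)=x^4+1$, the set $\{1,\zeta_8,\zeta_8^2,\zeta_8^3\}$ is a $\mathbb{Q}$-basis of $\mathbb{Q}(\zeta_8)$. These are the only facts from Galois theory I will need.

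\textbf{Step 1: reducing the spanning set.} The primitive $16$th roots of unity are $\zeta_{16}^k$ for $k\in\{1,3,5,7,9,11,13,15\}$. Because $\zeta_{16}^8=-1$, we have $\zeta_{16}^{k+8}=-\zeta_{16}^k$, so any $\mathbb{Q}$-linear combination of primitive $16$th roots of unity actually lies in $\mathrm{span}_{\mathbb{Q}}\{\zeta_{16},\zeta_{16}^3,\zeta_{16}^5,\zeta_{16}^7\}$. In particular, expressed in the basis $\{1,\zeta_{16},\ldots,\zeta_{16}^7\}$, such a combination has zero coefficient on each of $1,\zeta_{16}^2,\zeta_{16}^4,\zeta_{16}^6$. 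The same consideration for $8$th roots shows that any $\mathbb{Q}$-linear combination of primitive $8$th roots of unity lies in $\mathrm{span}_{\mathbb{Q}}\{\zeta_8,\zeta_8^3\}$, and so has zero coefficient on $1$ and $\zeta_8^2$ in the basis $\{1,\zeta_8,\zeta_8^2,\zeta_8^3\}$.

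\textbf{Step 2: writing $1$ and $\sqrt 2$ in the basis.} The element $1$ is already a basis element, and its coefficient on $1$ in either basis equals $1\neq 0$, so it cannot be written as a $\mathbb{Q}$-linear combination of primitive $16$th (or primitive $8$th) roots of unity by Step 1. For $\sqrt 2$, note $\sqrt 2=2\cos(\pi/4)=\zeta_8+\zeta_8^{-1}=\zeta_{16}^2+\zeta_{16}^{14}$. Using $\zeta_{16}^{14}=-\zeta_{16}^6$, this gives the expression
\[
\sqrt 2=\zeta_{16}^2-\zeta_{16}^6
\]
in the basis $\{1,\zeta_{16},\ldots,\zeta_{16}^7\}$. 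The coefficient on $\zeta_{16}^2$ (or on $\zeta_{16}^6$) is nonzero, so by Step 1, $\sqrt 2$ is not a $\mathbb{Q}$-linear combination of primitive $16$th roots of unity either.

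\textbf{Main obstacle.} There is really no deep obstacle: the key computational input is just the explicit identity $\sqrt 2=\zeta_{16}^2-\zeta_{16}^6$ (equivalently $\zeta_8+\zeta_8^{-1}$), which is immediate from $\cos(\pi/4)=\sqrt 2/2$. Everything else is linear independence of the power basis of a cyclotomic field. The only thing worth being careful about is consistently reducing indices modulo the relation $\zeta_{16}^8=-1$ (resp. $\zeta_8^4=-1$) so that the ``forbidden'' basis vectors $1,\zeta_{16}^{2k}$ (resp. $1,\zeta_8^2$) really are forbidden for sums of primitive roots.
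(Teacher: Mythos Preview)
Your proof is correct and follows essentially the same approach as the paper: both arguments use the power basis $\{1,\zeta_{16},\ldots,\zeta_{16}^7\}$ of $\mathbb{Q}(\zeta_{16})$, observe that primitive $16$th roots span only the odd-index part, and express $1$ and $\sqrt{2}=\zeta_{16}^2-\zeta_{16}^6$ in this basis to read off a contradiction. Your Step~1 makes the reduction via $\zeta_{16}^8=-1$ slightly more explicit than the paper does, but the content is identical.
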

	\begin{proof}
		Let $\eta_{2^n}=\exp(\pi/2^{n-1}) \in \mathbb C$ be a primitive $2^n$-th root of unity. Then $\eta_{2^n}$ is a root of $X^{2^{n-1}}+1$ which is the $2^n$-th cyclotomic polynomial and therefore irreducible over $\mathbb Q$. Let $K_n := \mathbb Q(\eta_{2^n})$. Then $1,\eta_{2^n},\cdots,\eta_{2^n}^{2^{n-1}-1}$ are linearly independent over $\mathbb Q$. Note that $\eta_{2^n}^i$ is a primitive $2^n$-th root of unity if and only if $i$ is odd.\\
		Since $1=1$ and $\sqrt{2}=\eta_{16}^2-\eta_{16}^6$,  $1$ and $\sqrt 2$ cannot be written as $\mathbb Q$-linear combination of primitive $16$-th roots of unity. Since $1=1$,  $1$ cannot be written as $\mathbb Q$-linear combination of primitive $8$-th roots of unity. \qed
	\end{proof}

\subsection{Quadratic forms}\label{QF}

We now recall the basic theory of quadratic forms over $\mathbb{F}_{2}$,
which we will use later.

Let $K=\mathbb{F}_{2^m}$, and 
let $Q:K\longrightarrow \mathbb{F}_{2}$ be a quadratic form.
The polarization of $Q$ is the symplectic bilinear form $B$ defined by
\[
B(x,y)=Q(x+y)-Q(x)-Q(y).
\]
By definition the radical of $B$ (denoted $W$) is 
\[
W =\{ x\in K : B(x,y)=0 \text{  for all $y\in K$}\}.
\]
The rank of $B$ is defined to be $m-\dim(W)$, and a well known theorem
states that the rank must be even.

Next let $Q|_W$ denote the restriction of $Q$ to $ W$, and let 
\[
W_0=\{ x\in W : Q(x)=0\}
\]
(sometimes $W_0$ is called the singular radical of $Q$).
Note that $Q|_W$ is a linear map $W\longrightarrow \mathbb{F}_2$
with kernel $W_0$.  Therefore
\begin{equation*}
\dim W_0 =
\begin{cases}
\dim(W) -1 & \text{if $Q|_W$ is onto} \\ 
\dim(W) & \text{if $Q|_W =0$ (i.e. $W=W_0$)}.%
\end{cases}%
\end{equation*}
The rank of $Q$ is defined to be $m-\dim(W_0)$.
The following theorem is well known, see Chapter $6$ of \cite{lidl}  for example.
\bigskip

\begin{proposition}\label{counts}
	Continue the above notation.
	Let $N=|\{x\in K : Q(x)=0\}|$, and let $w=\dim(W)$.
	
	If $Q$ has odd rank then $N=2^{m-1}$.
	In this case, $\sum_{x\in K} (-1)^{Q(x)} =0$.

	If $Q$ has even rank then $N=2^{m-1}\pm 2^{(m-2+w)/2}$.
	\end{proposition}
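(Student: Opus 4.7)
The plan is to reduce to the non-degenerate case by splitting off the radical and then to invoke the standard classification of non-degenerate quadratic forms over $\F_2$. First I would note that if $x,y\in W$ then $B(x,y)=0$, so $Q(x+y)=Q(x)+Q(y)$, meaning $Q|_W : W\to\F_2$ is an $\F_2$-linear map. This is what justifies the dichotomy stated in the excerpt: either $Q|_W$ is surjective (and $\dim W_0=w-1$, so the rank of $Q$ is $m-w+1$, which is odd since the rank $m-w$ of $B$ is even), or $Q|_W\equiv 0$ (and $\dim W_0=w$, so the rank of $Q$ is $m-w$, which is even). So the case split in the proposition corresponds exactly to these two behaviours of $Q|_W$.

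Next I would pass to the quotient. Because $B$ vanishes on $W$, the form $B$ descends to a well-defined non-degenerate alternating form $\bar B$ on $V:=K/W$, and for the same reason $Q$ descends to a quadratic form $\bar Q$ on $V$ (after choosing a section, which only introduces an ambiguity that is killed by $Q|_W\equiv 0$ in the even-rank case; in the odd-rank case one works with $\bar Q$ on any chosen set of coset representatives). Writing any $x\in K$ uniquely as $x=x_0+u$ with $x_0$ a chosen coset representative and $u\in W$, one has
\[
Q(x)=Q(x_0)+Q(u)+B(x_0,u)=\bar Q(x_0)+Q(u),
\]
since $B(x_0,u)=0$ because $u\in W$. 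This identity is the backbone of the counting.

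Now I would handle the two cases. In the odd-rank case $Q|_W$ is a nonzero linear form on $W$, so for each fixed $x_0$ the equation $Q(u)=\bar Q(x_0)$ has exactly $|W|/2=2^{w-1}$ solutions in $W$; summing over the $2^{m-w}$ cosets gives $N=2^{m-w}\cdot 2^{w-1}=2^{m-1}$, and the character sum $\sum_{x\in K}(-1)^{Q(x)}=2N-2^m$ is then $0$, proving the first part. In the even-rank case $Q(x)=\bar Q(x_0)$ depends only on the coset, so $N=2^{w}\cdot\#\{x_0\in V:\bar Q(x_0)=0\}$, and the task reduces to counting zeros of a non-degenerate quadratic form on the $2k$-dimensional space $V$, where $2k=m-w$.

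For that last step I would quote the standard classification (Lidl--Niederreiter, Chapter~6): every non-degenerate quadratic form on $\F_2^{2k}$ is $\F_2$-equivalent to the hyperbolic form $x_1y_1+\cdots+x_ky_k$ or to the elliptic form $x_1^2+x_1y_1+y_1^2+x_2y_2+\cdots+x_ky_k$, whose numbers of zeros are $2^{2k-1}+2^{k-1}$ and $2^{2k-1}-2^{k-1}$ respectively (an easy induction on $k$ using the split of variables into pairs). Multiplying by $2^w$ yields $N=2^{m-1}\pm 2^{w+k-1}=2^{m-1}\pm 2^{(m-2+w)/2}$, as claimed. The only serious ingredient is this classification and its zero count; every other step is a direct manipulation of the identity $Q(x_0+u)=\bar Q(x_0)+Q(u)$, so I expect the main (but purely expository) obstacle to be stating the classification cleanly rather than any genuine difficulty. \qed
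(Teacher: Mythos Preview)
Your argument is correct. The paper does not actually prove this proposition; it simply states it as well known and refers the reader to Chapter~6 of Lidl--Niederreiter. Your proof is precisely the standard reduction found in such references: split off the radical $W$, use the identity $Q(x_0+u)=Q(x_0)+Q(u)$ for $u\in W$, handle the odd-rank case by the balancedness of the nonzero linear map $Q|_W$, and in the even-rank case invoke the classification of non-degenerate binary quadratic forms into hyperbolic and elliptic types. One cosmetic remark: your parenthetical about ``working with $\bar Q$ on any chosen set of coset representatives'' in the odd-rank case is superfluous, since your actual count there never uses $\bar Q$ at all---it only uses that $Q|_W$ is a surjective linear form, so each value in $\F_2$ is hit $2^{w-1}$ times.
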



\section{Motivating example: $F_2(n,t_1,t_2)$}\label{sec:motivation}

Since precisely half of the elements of $\F_{2^n}$ have trace zero and half of the elements have trace one, we have $F_2(n,0) =
F_2(n,1) = 2^{n-1}$ for all $n \ge 1$. Hence the four $F_2(n,t_1,t_2)$ are the
first interesting cases. 

Cattell \ea \ have expressed $F_2(n,t_1,t_2)$ as sums of binomial coefficients~\cite[Thm. 4]{cattell}. 
For $n \ge 2$, we instead write $F_2(n,t_1,t_2) = 2^{n-2} + f_2(n,t_1,t_2)$ and deduce
the formulae for $f_2(n,t_1,t_2)$ from~\cite[Thm. 3.4]{fitzyucas} for odd $n$, and~\cite[Thm. 3]{yucasmullen} for even $n$,
which are presented in Table~\ref{table:2coeffs}, in which the rightmost four columns cover the four
$(t_1,t_2)$ pairs.
\begin{table}[t]
\caption{$f_2(n,t_1,t_2)$}
\begin{center}\label{table:2coeffs}
\begin{tabular}{c|cccc}
\hline
$n \pmod{8}$ & $(0,0)$ & $(0,1)$ & $(1,0)$ & $(1,1)$ \\
\hline
$0$ & $-2^{n/2 - 1}$     & $2^{n/2 - 1}$  & $0$   & $0$ \\
$1$ & $2^{n/2 - 3/2}$  & $-2^{n/2 - 3/2}$ & $2^{n/2 - 3/2}$ & $-2^{n/2 - 3/2}$\\
$2$ & $0$               & $0$  &  $-2^{n/2-1}$ & $2^{n/2-1}$ \\
$3$ & $-2^{n/2 - 3/2}$  & $2^{n/2 - 3/2}$  & $2^{n/2 - 3/2}$  & $-2^{n/2 - 3/2}$ \\
$4$ & $2^{n/2 - 1}$      & $-2^{n/2 - 1}$  & $0$ & $0$\\
$5$ & $-2^{n/2-3/2}$    & $2^{n/2-3/2}$ & $-2^{n/2-3/2}$ & $2^{n/2-3/2}$ \\
$6$ & $0$               &  $0$  & $2^{n/2-1}$ & $-2^{n/2-1}$ \\
$7$ & $2^{n/2 - 3/2}$  & $-2^{n/2 - 3/2}$  &  $-2^{n/2 - 3/2}$ & $2^{n/2 - 3/2}$   \\
\hline
\end{tabular}
\end{center}
\end{table}
As one can see, for each $(t_1,t_2)$ the expression for $f_2(n,t_1,t_2)$ depends on $n \bmod 8$.
This observation naturally gives rise to two related questions: why are the
formulae periodic, and why is the period $8$? 

To answer these questions we Fourier-analyse the formulae, using the complex $8$-th roots of unity. 
In particular, observe that dividing each $f_2(n,t_1,t_2)$ by $2^{n/2}$ gives a constant. We can thus 
normalise by this factor and expand each as follows:
\begin{equation}\label{fourier}
f_2(n,t_1,t_2) = \sum_{k=0}^{7} g(t_1,t_2)_k \, (\sqrt{2}\omega_{8}^{k})^n = 2^{n/2}
\, \sum_{k=0}^{7} g(t_1,t_2)_k \, \omega_{8}^{kn},
\end{equation}
where the $g(t_1,t_2)_k$ are unknown constants and $\omega_8 =  e^{i \pi/4} = (1 + i)/\sqrt{2}$.  
For each $(t_1,t_2)$, let $\overline{v}_{t_1,t_2}$ be the length-$8$ vector
$f_2(n,t_1,t_2)/2^{n/2}$, \ie
\begin{eqnarray}
\nonumber \overline{v}_{0,0} &=& (-2^{-1},2^{-3/2},0,-2^{-3/2},2^{-1},-2^{-3/2},0,2^{-3/2}),\\
\nonumber \overline{v}_{0,1} &=& (2^{-1},-2^{-3/2},0,2^{-3/2},-2^{-1},2^{-3/2},0,-2^{-3/2}),\\
\nonumber \overline{v}_{1,0} &=& (0,2^{-3/2},-2^{-1},2^{-3/2},0,-2^{-3/2},2^{-1},-2^{-3/2}),\\
\nonumber \overline{v}_{1,1} &=& (0,-2^{-3/2},2^{-1},-2^{-3/2},0,2^{-3/2},-2^{-1},2^{-3/2}).
\end{eqnarray}
Let $M$ be the discrete Fourier transform matrix for $\omega_8$ with entries $M_{j,k} =
\omega_{8}^{(j-1)\cdot(k-1)}$. Then $\overline{g}(t_1,t_2) = [g(t_1,t_2)_0,\ldots,g(t_1,t_2)_{7}]$
satisfies:
\[
M \cdot \overline{g}(t_1,t_2)^{T} = \overline{v}_{t_1,t_2}^T. 
\]
Since $M_{j,k}^{-1} = \frac{1}{8} \cdot \omega_{8}^{-(j-1)\cdot(k-1)}$, we easily compute
\[
\overline{g}(t_1,t_2)^T = M^{-1} \cdot \overline{v}_{t_1,t_2}^T.
\]
Then substituting each $\overline{g}(t_1,t_2)$ into the Fourier expansion~(\ref{fourier}), we obtain
\begin{proposition}
For $n \ge 2$ we have
\begin{eqnarray}
\nonumber f_2(n,0,0) &=& -\frac{1}{4}((\sqrt{2}\omega_{8}^3)^n +
(\sqrt{2}\omega_{8}^5)^n)  = -\frac{1}{4}((-1+i)^n + (-1-i)^n),\\
\nonumber f_2(n,0,1) &=& \frac{1}{4}((\sqrt{2}\omega_{8}^3)^n +
(\sqrt{2}\omega_{8}^5)^n) = \frac{1}{4}((-1+i)^n + (-1-i)^n),\\
\nonumber f_2(n,1,0) &=& -\frac{i}{4}((\sqrt{2}\omega_{8}^3)^{n} -
(\sqrt{2}\omega_{8}^5)^{n}) = -\frac{i}{4}((-1+i)^n - (-1-i)^n),\\
\nonumber f_2(n,1,1) &=& \frac{i}{4}((\sqrt{2}\omega_{8}^3)^{n} -
(\sqrt{2}\omega_{8}^5)^{n}) = \frac{i}{4}((-1+i)^n - (-1-i)^n).
\end{eqnarray}
\end{proposition}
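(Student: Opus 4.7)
The plan is to directly invert the discrete Fourier transform $M\cdot\overline{g}(t_1,t_2)^T = \overline{v}_{t_1,t_2}^T$ that was set up in the paragraph immediately preceding the statement. The four explicit vectors $\overline{v}_{t_1,t_2}$ have already been read off from Table~\ref{table:2coeffs} and displayed, so the claim~(\ref{fourier}) for all $n\ge 2$ is equivalent to computing $\overline{g}(t_1,t_2)^T = M^{-1}\overline{v}_{t_1,t_2}^T$ and verifying that only the components at $k=3$ and $k=5$ survive, with the specific values asserted.

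Using $M_{j,k}^{-1} = \frac{1}{8}\omega_8^{-(j-1)(k-1)}$, each component reduces to the single sum $g(t_1,t_2)_k = \frac{1}{8}\sum_{m=0}^{7}(\overline{v}_{t_1,t_2})_m\,\omega_8^{-mk}$. In each of the four displayed vectors the entries at positions $m$ and $m+4$ are negatives of each other; combined with $\omega_8^{-4k}=(-1)^k$, this folds the eight-term sum to $\frac{1}{4}\sum_{m=0}^{3}(\overline{v}_{t_1,t_2})_m\,\omega_8^{-mk}$ when $k$ is odd and to $0$ when $k$ is even. Four reduced sums of four terms per vector then give the components $g(t_1,t_2)_k$ for $k\in\{1,3,5,7\}$; I expect $g(t_1,t_2)_1 = g(t_1,t_2)_7 = 0$ and the surviving pair $(g(t_1,t_2)_3, g(t_1,t_2)_5)$ to be exactly the complex-conjugate coefficients $\pm\tfrac{1}{4}$ or $\pm\tfrac{i}{4}$ dictated by the four identities.

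Substituting these coefficients back into~(\ref{fourier}) and multiplying through by $2^{n/2}$ immediately yields the first form of each identity. The alternate form follows from the elementary identity $\sqrt{2}\omega_8 = 1+i$, which gives $\sqrt{2}\omega_8^3 = (1+i)\cdot\omega_8^2 = (1+i)\cdot i = -1+i$, and, via $\omega_8^5 = \overline{\omega_8^3}$, also $\sqrt{2}\omega_8^5 = -1-i$.

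The argument is essentially mechanical, so I do not anticipate a conceptual obstacle; the only care needed is bookkeeping of indices and signs in the four reduced inner products per vector. As independent sanity checks I would verify that $\sum_{t_1,t_2\in\{0,1\}}f_2(n,t_1,t_2)=0$ for $n\ge 2$ (which follows from $\sum_{t_1,t_2}F_2(n,t_1,t_2)=2^n$) and that each claimed right-hand side is a real integer, both of which are manifest from the complex-conjugate pairing in $(-1+i)^n+(-1-i)^n$ and its imaginary analogue $i\bigl((-1+i)^n-(-1-i)^n\bigr)$.
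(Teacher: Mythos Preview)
Your proposal is correct and follows exactly the approach the paper takes: the paper sets up the discrete Fourier transform, states that one ``easily computes'' $\overline{g}(t_1,t_2)^T = M^{-1}\overline{v}_{t_1,t_2}^T$, and then substitutes the result into~(\ref{fourier}). You have supplied the details the paper omits---in particular the antisymmetry observation $v_{m+4}=-v_m$ that kills the even-$k$ components and halves the work for odd $k$---but the strategy is identical.
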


By a theorem due to G\"olo\u{g}lu, McGuire and Moloney, the expressions for
$F_2(n,0,0)$, $F_2(n,1,1)$, $F_2(n,0,1)$ and $F_2(n,1,0)$ count the number of
elements $a \in \F_{2^n}$ for which the corresponding binary Kloosterman sum is
congruent to $0,4,8$ and $12 \pmod{16}$, respectively~\cite[Thm. 1.2]{faruk1}.  
Furthermore, Lison\v{e}k and Moisio have explicitly computed the number of elements
$a \in \F_{2^n}$ for which the Kloosterman sum is divisible by $16$~\cite[Thm 3.6]{lisonekmoisio}, 
relating it to the number of $\F_{2^n}$-rational points on the supersingular elliptic curve
\[
E_1/\F_2 : y^2 + y = x^3 + x.
\]
By G\"olo\u{g}lu \ea's result, this number is therefore related to $F_2(n,0,0)$.

We will make repeated use of the following easy lemma, which is an immediate generalisation of~\cite[Lemma 1.1]{fitzyucas},
which was proven for the base field $\F_2$ case.
\begin{lemma}\label{lem:T2T3}
For all $\alpha, \beta \in \F_{q^n}$ we have:
\begin{itemize}
\item[(i)] $T_2(\alpha + \beta) + T_2(\alpha) + T_2(\beta) = T_1(\alpha)T_1(\beta)
  + T_1(\alpha\beta)$
\item[(ii)] $T_3(\alpha + \beta) + T_3(\alpha) + T_3(\beta) =
  T_2(\alpha)T_1(\beta) + T_1(\alpha)T_2(\beta) 
+ T_1(\alpha^2\beta + \alpha\beta^2) + T_1(\alpha\beta)T_1(\alpha+\beta)$.
\end{itemize}
\end{lemma}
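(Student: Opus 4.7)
The plan is to prove both identities by directly expanding the left-hand sides using the Frobenius-sum definitions of $T_1, T_2, T_3$ and then reorganising the cross-terms. Throughout, I use that we work in characteristic $2$, so $+$ and $-$ coincide, and that for any $\gamma \in \F_{q^n}$ the operator $\gamma \mapsto \gamma^{q^i}$ is an $\F_q$-algebra map, hence $(\alpha+\beta)^{q^i} = \alpha^{q^i}+\beta^{q^i}$ and $T_1$ is additive.

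For part (i), I would expand
\[
T_2(\alpha+\beta)=\sum_{0\le i<j\le n-1}(\alpha^{q^i}+\beta^{q^i})(\alpha^{q^j}+\beta^{q^j})
\]
into four groups. The two ``pure'' groups give $T_2(\alpha)+T_2(\beta)$, while the two ``mixed'' groups combine into $\sum_{i\ne j}\alpha^{q^i}\beta^{q^j}$ (by pairing $(i,j)$ and $(j,i)$). Splitting the product $T_1(\alpha)T_1(\beta)=\sum_{i,j}\alpha^{q^i}\beta^{q^j}$ into its diagonal part $\sum_i(\alpha\beta)^{q^i}=T_1(\alpha\beta)$ and its off-diagonal part gives
\[
\sum_{i\ne j}\alpha^{q^i}\beta^{q^j}=T_1(\alpha)T_1(\beta)+T_1(\alpha\beta),
\]
which rearranges to (i).

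For part (ii), I would apply the same expansion to $T_3(\alpha+\beta)$. After extracting $T_3(\alpha)+T_3(\beta)$ the remaining terms split as $S_1+S_2$, where
\[
S_1=\sum_{i<j<k}\bigl(\alpha^{q^i+q^j}\beta^{q^k}+\alpha^{q^i+q^k}\beta^{q^j}+\alpha^{q^j+q^k}\beta^{q^i}\bigr),
\]
and $S_2$ is the analogous sum with $\alpha$ and $\beta$ swapped. The key combinatorial step is to recognise that for any ordered pair $a<b$ and any third index $c \notin\{a,b\}$, exactly one of the three summands above contributes (according to whether $c<a$, $a<c<b$, or $c>b$), so $S_1=\sum_{a<b,\ c\notin\{a,b\}}\alpha^{q^a+q^b}\beta^{q^c}$. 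Comparing with $T_2(\alpha)T_1(\beta)=\sum_{a<b}\sum_c\alpha^{q^a+q^b}\beta^{q^c}$, the missing $c\in\{a,b\}$ diagonal equals
\[
\sum_{a<b}\bigl(\alpha^{q^b}(\alpha\beta)^{q^a}+\alpha^{q^a}(\alpha\beta)^{q^b}\bigr)=\sum_{a\ne b}\alpha^{q^a}(\alpha\beta)^{q^b}=T_1(\alpha)T_1(\alpha\beta)+T_1(\alpha^2\beta).
\]
Thus $S_1=T_2(\alpha)T_1(\beta)+T_1(\alpha)T_1(\alpha\beta)+T_1(\alpha^2\beta)$, and by symmetry $S_2=T_1(\alpha)T_2(\beta)+T_1(\beta)T_1(\alpha\beta)+T_1(\alpha\beta^2)$. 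Adding and using $T_1(\alpha)+T_1(\beta)=T_1(\alpha+\beta)$ gives the identity in (ii).

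The only real obstacle is the bookkeeping in the expansion of $T_3(\alpha+\beta)$, specifically the case analysis that recasts $S_1$ as a sum over $(a,b,c)$ with $a<b$ and $c\notin\{a,b\}$; once that reindexing is in hand, everything reduces to the same diagonal-extraction trick that proves part (i), and (i) itself may even be cited directly when verifying the cross-terms of (ii). I would present (i) in full and then state (ii) as ``by the same method''\!, highlighting only the reindexing step and the char-$2$ simplifications. \qed
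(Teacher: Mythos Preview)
Your argument is correct: the direct expansion of $T_2(\alpha+\beta)$ and $T_3(\alpha+\beta)$ via the Frobenius-sum definitions, followed by the diagonal-extraction trick, yields both identities exactly as you describe, and the reindexing of $S_1$ as a sum over $a<b$, $c\notin\{a,b\}$ is valid. The paper does not actually supply a proof of this lemma; it simply observes that the statement is an immediate generalisation of~\cite[Lemma~1.1]{fitzyucas} (the $\F_2$ case), so your write-up is in fact more detailed than what the paper provides and follows the expected route.
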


We can now reprove the connection between $F_2(n,0,0)$ and $\#E_1(\F_{2^n})$ directly as follows. 
If $T_1(a) = 0$ then $a = x^2 + x$ for two $x \in \F_{2^n}$.
By Lemma~\ref{lem:T2T3}(i), we have $T_2(x^2 + x) = T_1(x^3 + x)$, and so for $n \ge 2$ we have:
\begin{eqnarray}
\nonumber F_2(n,0,0) &=& \frac{1}{2} \, \#\{ x \in \F_{2^n} \mid T_1(x^3 + x) = 0\}\\
\nonumber            &=& \frac{1}{4} \, \#\{ (x,y) \in \F_{2^n} \times
\F_{2^n} \mid y^2 + y = x^3 + x\}\\
\nonumber           &=& \frac{1}{4} \, (\#E_1(\F_{2^n}) - 1)\\
\nonumber           &=& 2^{n-2} - \frac{(-1+i)^n + (-1 - i)^n}{4},
\end{eqnarray}
where in the final line we have used the fact that the characteristic polynomial of Frobenius of $E_1$ 
is $P_{E_1}(X) = X^2 + 2X + 2 = (X - \sqrt{2}\omega_{8}^3)(X - \sqrt{2}\omega_{8}^5)$, since the trace of $E_1$ is $-2$.
As $F_2(n,0,0) + F_2(n,0,1) = F_2(n,0) = 2^{n-1}$, we immediately have
$f_2(n,0,1) = -f_2(n,0,0)$. The periodicity for the trace zero cases thus arises from the connection
with the supsersingular elliptic curve $E_1$, whose Weil $q$-numbers are $\sqrt{2}\omega_{8}^{\pm 3}$.
For the trace one cases, note that the form of $f_2(n,1,0)$ and $f_2(n,1,1)$ implies that they do not arise from the
zeta function of an abelian variety, but are related to
$f_2(n,0,0)$ and $f_2(n,0,1)$, as derived in Lemma 2.6 of~\cite{fitzyucas} for odd $n$,
and~\cite{yucasmullen} (deduced from~\cite{cattell}) for even $n$. Since our focus is on the
application of supersingular curves to the counting problem, we omit these cases for now, but indicate how supersingular
curves may also be used for the trace one cases, for odd $n$, in the next section.


\section{Recomputing $F_2(n,0,t_2,t_3)$}\label{sec:3coeffsF2}

We now apply the approach of the previous section to compute $F_2(n,0,t_2,t_3)$.
For the general case where $t_1$ may be 0 or $1$, there are two complementary results
already known. 
Fitzgerald and Yucas gave formulae for $F_2(n,t_1,t_2,t_3)$ when $n$ is odd~\cite{fitzyucas}, while Yucas and Mullen gave
formulae for $F_2(n,t_1,t_2,t_3)$ when $n$ is even~\cite{yucasmullen}. As in Section~\ref{sec:motivation} we Fourier-analyse 
these formulae in order to determine which supersingular curves are relevant and then prove the formulae directly.

As before, for $n \ge 3$ we write $F_2(n,0,t_2,t_3) = 2^{n-3} + f_2(n,0,t_2,t_3)$. Table~\ref{table:3coeffs} gives the 
set of formulae for $f_2(n,0,t_1,t_2)$, in which the rightmost four columns cover the possible $(0,t_2,t_3)$ triples.
\begin{table}[t]
\caption{$f_2(n,0,t_2,t_3)$}
\begin{center}\label{table:3coeffs}
\begin{tabular}{c|cccc}
\hline
$n \pmod{24}$ & $(0,0,0)$ & $(0,0,1)$ & $(0,1,0)$ & $(0,1,1)$\\
\hline
$0$ & $-5 \cdot 2^{n/2 - 2}$     & $3 \cdot 2^{n/2-2}$   &  $2^{n/2-2}$       &   $2^{n/2-2}$ \\
$1$ & $3 \cdot 2^{n/2 - 5/2}$  & $-2^{n/2-5/2}$       &   $-2^{n/2-5/2}$ &  $-2^{n/2-5/2}$  \\
$2$ & $2^{n/2-2}$                & $-2^{n/2-2}$          &  $2^{n/2-2}$      &    $-2^{n/2-2}$ \\
$3$ & $0$                       & $-2^{n/2-3/2}$       & $-2^{n/2-3/2}$  &  $2^{n/2 - 1/2}$ \\
$4$ & $0$                       & $2^{n/2-1}$           &  $0$             &  $-2^{n/2-1}$   \\
$5$ & $-3 \cdot 2^{n/2-5/2}$   & $2^{n/2-5/2}$        & $2^{n/2-5/2}$    &   $2^{n/2-5/2}$ \\
$6$ & $2^{n/2-2}$                & $-2^{n/2-2}$          &   $2^{n/2-2}$     &  $-2^{n/2-2}$ \\
$7$ & $3 \cdot 2^{n/2 - 5/2}$   & $-2^{n/2-5/2}$      & $-2^{n/2-5/2}$   &   $-2^{n/2-5/2}$ \\
$8$ & $-2^{n/2 - 1}$              & $0$                 &  $-2^{n/2-1}$     & $2^{n/2}$ \\
$9$ & $0$                        & $2^{n/2-3/2}$      & $2^{n/2-3/2}$    &  $-2^{n/2 - 1/2}$  \\
$10$ & $2^{n/2-2}$                & $-2^{n/2-2}$         &  $2^{n/2-2}$      &  $-2^{n/2-2}$    \\
$11$ & $-3 \cdot 2^{n/2-5/2}$    & $2^{n/2-5/2}$     &  $2^{n/2-5/2}$    &  $2^{n/2-5/2}$  \\
$12$ & $3 \cdot 2^{n/2 - 2}$      & $-2^{n/2-2}$         &  $-3 \cdot 2^{n/2-2}$& $2^{n/2-2}$  \\
$13$ & $-3 \cdot 2^{n/2-5/2}$   & $2^{n/2-5/2}$      & $2^{n/2-5/2}$    &   $2^{n/2-5/2}$ \\
$14$ & $2^{n/2-2}$               & $-2^{n/2-2}$         &  $2^{n/2-2}$      & $-2^{n/2-2}$  \\
$15$ & $0$                       & $2^{n/2-3/2}$      &  $2^{n/2-3/2}$  &   $-2^{n/2 - 1/2}$  \\
$16$ & $-2^{n/2 - 1}$            & $0$                  &  $-2^{n/2-1}$    &    $2^{n/2}$   \\
$17$ & $3 \cdot 2^{n/2 - 5/2}$   & $-2^{n/2-5/2}$     & $-2^{n/2-5/2}$  &  $-2^{n/2-5/2}$  \\
$18$ & $2^{n/2-2}$                 & $-2^{n/2-2}$       &  $2^{n/2-2}$      & $-2^{n/2-2}$    \\
$19$ & $-3 \cdot 2^{n/2 - 5/2}$   & $2^{n/2-5/2}$     & $2^{n/2-5/2}$   &  $2^{n/2-5/2}$  \\
$20$ & $0$                        & $2^{n/2-1}$        & $0$              &   $-2^{n/2-1}$\\
$21$ & $0$                       & $-2^{n/2-3/2}$     & $-2^{n/2-3/2}$  &   $2^{n/2 - 1/2}$   \\
$22$ & $2^{n/2-2}$                 & $-2^{n/2-2}$       &  $2^{n/2-2}$      &  $-2^{n/2-2}$ \\
$23$ & $3 \cdot 2^{n/2 - 5/2}$   & $-2^{n/2-5/2}$    & $-2^{n/2-5/2}$   &  $-2^{n/2-5/2}$ \\
\hline
\end{tabular}
\end{center}
\end{table}
As in Section~\ref{sec:motivation}, since the set of formulae have period $24$, one can express
each in terms of the complex $24$-th roots of unity. We thus expand each as follows:
\begin{equation}\label{fourier2}
f_2(n,0,t_2,t_3) = \sum_{k=0}^{23} g(0,t_2,t_3)_k \, (\sqrt{2} \omega_{24}^{k})^n = 
2^{n/2} \, \sum_{k=0}^{23} g_k \, \omega_{24}^{kn},
\end{equation}
with $\omega_{24} = e^{i \pi /12}  = ((1+\sqrt{3}) + (-1+\sqrt{3})i)/2\sqrt{2}$.
For each $(t_2,t_3)$, let $\overline{v}_{0,t_2,t_3}$ be the vector of constants
$f_2(n,0,t_2,t_3)/2^{n/2}$. 
Let $M$ be the discrete Fourier transform matrix with $M_{j,k} =
\omega_{24}^{(j-1)\cdot(k-1)}$. Then $\overline{g}(0,t_2,t_3) = [g(0,t_2,t_3)_k]_{0 \le k \le 23}$
satisfies:
\[
M \cdot \overline{g}(0,t_2,t_3)^{T} = \overline{v}_{0,t_2,t_3}^T. 
\]
Since $M_{j,k}^{-1} = \frac{1}{24} \omega_{24}^{-(j-1)\cdot(k-1)}$ we easily compute
\[
\overline{g}(0,t_2,t_3)^T = M^{-1} \cdot \overline{v}_{0,t_2,t_3}^{T}.
\]
Then substituting each $\overline{g}(t_1,t_2)$ into the Fourier expansion~(\ref{fourier2}), we obtain

\begin{proposition}\label{prop:F2formulae}
For $n \ge 3$ we have
\begin{eqnarray}
\nonumber f_2(n,0,0,0) &=& -\frac{1}{4}((\sqrt{2}\omega_{8}^3)^n + (\sqrt{2}\omega_{8}^5)^n)
-\frac{1}{8}((\sqrt{2}i)^n + (-\sqrt{2}i)^n)\\
\nonumber & & -\frac{1}{8}((\sqrt{2}\omega_{24}^{5})^n + (\sqrt{2}\omega_{24}^{11})^n
+ (\sqrt{2}\omega_{24}^{13})^n + (\sqrt{2}\omega_{24}^{19})^n)\\
\nonumber f_2(n,0,0,1) &=& \frac{1}{8}((\sqrt{2}i)^n + (-\sqrt{2}i)^n)\\
\nonumber & & +\frac{1}{8}((\sqrt{2}\omega_{24}^{5})^n + (\sqrt{2}\omega_{24}^{11})^n
+ (\sqrt{2}\omega_{24}^{13})^n + (\sqrt{2}\omega_{24}^{19})^n)\\
\nonumber f_2(n,0,1,0) &=& -\frac{1}{8}((\sqrt{2}i)^n + (-\sqrt{2}i)^n)\\
\nonumber & & +\frac{1}{8}((\sqrt{2}\omega_{24}^{5})^n + (\sqrt{2}\omega_{24}^{11})^n
+ (\sqrt{2}\omega_{24}^{13})^n + (\sqrt{2}\omega_{24}^{19})^n)\\
\nonumber f_2(n,0,1,1) &=& \frac{1}{4}((\sqrt{2}\omega_{8}^3)^n + (\sqrt{2}\omega_{8}^5)^n)
+ \frac{1}{8}((\sqrt{2}i)^n + (-\sqrt{2}i)^n)\\
\nonumber & & -\frac{1}{8}((\sqrt{2}\omega_{24}^{5})^n + (\sqrt{2}\omega_{24}^{11})^n + (\sqrt{2}\omega_{24}^{13})^n + (\sqrt{2}\omega_{24}^{19})^n).
\end{eqnarray}
\end{proposition}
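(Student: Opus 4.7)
The proof is a direct verification via the discrete Fourier inversion already set up in the paragraph preceding the proposition. By~\cite{fitzyucas,yucasmullen}, the entries of Table~\ref{table:3coeffs} hold for every $n\ge 3$, so dividing each entry by $2^{n/2}$ produces a length-$24$ vector $\overline{v}_{0,t_2,t_3}$ whose $(n\bmod 24)$-th coordinate is the constant $f_2(n,0,t_2,t_3)/2^{n/2}\in\Q(\sqrt{2})$. Any function of $n\bmod 24$ has a unique expansion in the basis $\{\omega_{24}^{kn}\}_{k=0}^{23}$, so the linear system $M\,\overline{g}(0,t_2,t_3)^T=\overline{v}_{0,t_2,t_3}^T$ determines $\overline{g}(0,t_2,t_3)$ uniquely; the displayed formula is then nothing more than the reconstruction $f_2(n,0,t_2,t_3)=2^{n/2}\sum_k g_k\,\omega_{24}^{kn}$.

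The plan is therefore to compute $\overline{g}(0,t_2,t_3)^T=M^{-1}\overline{v}_{0,t_2,t_3}^T$ explicitly for each of the four triples $(0,t_2,t_3)$, using $M^{-1}_{j,k}=\tfrac{1}{24}\omega_{24}^{-(j-1)(k-1)}$. In practice one would split $\overline{v}_{0,t_2,t_3}$ into its even-$n$ and odd-$n$ subvectors (whose entries are rationals, respectively $\sqrt{2}$-multiples of rationals), then group index pairs $k$ and $k+12$ using $\omega_{24}^{12}=-1$ to collapse the length-$24$ inner products to length-$12$ ones, and finally use the conjugation $\omega_{24}^{24-k}=\overline{\omega_{24}^{k}}$ to combine conjugate coordinates into real expressions.

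One expects that only a small subset of $\{g_k\}_{k=0}^{23}$ is nonzero, and the proposition records exactly which: the conjugate pair $k\in\{3,5\}$, giving the Weil numbers $\sqrt{2}\omega_{8}^{\pm 3}$ of the supersingular elliptic curve $E_{1}$ already appearing in Section~\ref{sec:motivation}; the pair $k\in\{6,18\}$, giving the Weil numbers $\pm\sqrt{2}\,i$; and the orbit $k\in\{5,11,13,19\}$, whose product $(t-\sqrt{2}\omega_{24}^{5})(t-\sqrt{2}\omega_{24}^{11})(t-\sqrt{2}\omega_{24}^{13})(t-\sqrt{2}\omega_{24}^{19})=t^{4}+2t^{3}+2t^{2}+4t+4$ is the Weil polynomial of a supersingular abelian surface. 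Thus the Fourier analysis simultaneously pinpoints the algebraic objects (an elliptic curve of trace $-2$, an elliptic curve of trace $0$, and a genus-$2$ supersingular curve) whose rational-point counts the later sections will reconstruct directly, thereby explaining the period-$24$ behaviour.

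The main obstacle is purely computational bookkeeping: carrying out the four $24$-point DFTs and confirming that the surviving coefficients agree, with the correct signs and with the factors $\pm\tfrac{1}{4}$, $\pm\tfrac{1}{8}$, across all four triples. A convenient internal check, which catches any sign error in the inversion, is that $\sum_{t_2,t_3\in\F_2}f_2(n,0,t_2,t_3)=F_2(n,0)-4\cdot 2^{n-3}=0$ for every $n\ge 3$; inspecting the proposition, the coefficients of each of the three families of roots of unity sum to zero across the four right-hand sides, which is consistent and easy to verify by hand.
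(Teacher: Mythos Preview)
Your approach is correct in principle: the Fourier inversion of Table~\ref{table:3coeffs} (which is itself justified by~\cite{fitzyucas,yucasmullen}) does yield the proposition, and indeed the paper \emph{derives} the statement exactly this way in the paragraph preceding it. However, the paper's formal proof is a genuinely different argument. Rather than invert a DFT, the paper substitutes $a=x^{2}+x$ and uses Lemma~\ref{lem:T2T3} to rewrite $T_{2}(a)=T_{1}(x^{3}+x)$ and $T_{3}(a)=T_{1}(x^{5}+x)$; then Lemma~\ref{lem:fitz} expresses $F_{2}(n,0,0,0)$ as a combination of the zero-counts $Z(q_{1}),Z(q_{2}),Z(q_{1}+q_{2})$, each of which is half the number of affine points on one of the curves $E_{1}:y^{2}+y=x^{3}+x$, $H_{2}:y^{2}+y=x^{5}+x$, $H_{1}:y^{2}+y=x^{5}+x^{3}$. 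The known Weil polynomials of these supersingular curves then produce the displayed formula directly, and the remaining three cases follow from the complementary identities $F_{2}(n,0,0,0)+F_{2}(n,0,0,1)=F_{2}(n,0,0)$, $F_{2}(n,0,0,0)+F_{2}(n,0,1,0)=F_{2}(n,0,\ast,0)=\tfrac{1}{2}Z(q_{2})$, and $F_{2}(n,0,1,0)+F_{2}(n,0,1,1)=F_{2}(n,0,1)$.

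What each approach buys: yours is a legitimate proof but it \emph{imports} the periodicity from~\cite{fitzyucas,yucasmullen} and only observes a posteriori that the nonzero Fourier coefficients cluster at Weil numbers of supersingular curves. The paper's proof is self-contained in the sense that it \emph{explains} the periodicity: the numbers $\sqrt{2}\omega_{8}^{3},\sqrt{2}\omega_{8}^{5},\pm\sqrt{2}i,\sqrt{2}\omega_{24}^{5,11,13,19}$ appear because they are Frobenius eigenvalues of $E_{1}$, $E_{2}$, $H_{1}$, and it works uniformly for all $n$ rather than splitting into odd/even cases. One small slip in your sketch: in the length-$24$ expansion the Weil numbers $\sqrt{2}\omega_{8}^{3}$ and $\sqrt{2}\omega_{8}^{5}$ correspond to indices $k=9$ and $k=15$ (since $\omega_{8}=\omega_{24}^{3}$), not $k\in\{3,5\}$ as you wrote.
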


As before the $(\sqrt{2}\omega_{8}^3)^n + (\sqrt{2}\omega_{8}^5)^n$ term
arises from the roots of the characteristic polynomial of Frobenius $P_{E_1}(X) = X^2 + 2X + 2$. The term
$(\sqrt{2}i)^n + (-\sqrt{2}i)^n$ arises from the roots of $P_{E_2}(X) = X^2 + 2$ corresponding to the elliptic curve
\[
E_2/\F_2: y^2 + y = x^3 + 1.
\] 
Finally the term $(\sqrt{2}\omega_{24}^{5})^n + (\sqrt{2}\omega_{24}^{11})^n + (\sqrt{2}\omega_{24}^{13})^n + (\sqrt{2}\omega_{24}^{19})^n$
arises from the roots of $P_{H_1}(X) = X^4 + 2X^3 + 2X^2+ 4X + 4$ corresponding to the genus $2$ supersingular curve (see~\cite[\S4]{xing})
\[
H_1/\F_2: y^2 + y = x^5 + x^3.
\]

In order to prove Proposition~\ref{prop:F2formulae} starting from the relevant supersingular curves, we use the following easy lemma
due to Fitzgerald. For a function $q_i: \F_{2^n} \rightarrow \F_2$ let $Z(q_i)$ denote the number of zeros of $q_i$, and for $w_i \in \F_2$ 
let $N(w_1,w_2)$ be the number of $x \in \F_{2^n}$ such that $q_i(x) = w_i$, for $i=1,2$.
\begin{lemma}~\cite[Prop. 5.3]{fitz}\label{lem:fitz}
Let $q_1,q_2: \F_{2^n} \rightarrow \F_2$ be functions. Then 
\[
N(0,0) = \frac{1}{2} \big( Z(q_1) + Z(q_2) + Z(q_1+q_2) - 2^n \big).
\]
\end{lemma}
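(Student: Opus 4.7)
The plan is to partition $\F_{2^n}$ according to the four possible values of the pair $(q_1(x),q_2(x)) \in \F_2 \times \F_2$, express each of $Z(q_1)$, $Z(q_2)$, $Z(q_1+q_2)$ and $2^n$ as a simple linear combination of the four counts $N(w_1,w_2)$, and then solve the resulting linear system for $N(0,0)$.

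Concretely, first I would note that the four sets $\{x : q_1(x)=w_1,\, q_2(x)=w_2\}$ for $(w_1,w_2) \in \F_2^2$ partition $\F_{2^n}$, so $N(0,0)+N(0,1)+N(1,0)+N(1,1) = 2^n$. Then $Z(q_1) = N(0,0)+N(0,1)$ and $Z(q_2) = N(0,0)+N(1,0)$ by grouping according to the value of the other coordinate. The decisive observation is that in characteristic $2$ the condition $q_1(x)+q_2(x)=0$ is equivalent to $q_1(x)=q_2(x)$, so $Z(q_1+q_2) = N(0,0)+N(1,1)$.

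Summing the three identities for $Z(q_1)$, $Z(q_2)$, $Z(q_1+q_2)$ gives
\[
Z(q_1)+Z(q_2)+Z(q_1+q_2) = 3\,N(0,0)+N(0,1)+N(1,0)+N(1,1),
\]
and subtracting the partition identity $2^n = N(0,0)+N(0,1)+N(1,0)+N(1,1)$ collapses the right-hand side to $2\,N(0,0)$. Dividing by $2$ yields the formula in the lemma. There is no substantive obstacle: the only ingredient beyond bookkeeping is the characteristic-$2$ identity $q_1+q_2 = 0 \Leftrightarrow q_1 = q_2$, which is precisely what makes the linear system over the four $N(w_1,w_2)$ invertible by this short combination.
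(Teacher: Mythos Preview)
Your proof is correct and follows essentially the same partition-by-values argument that the paper uses: the paper does not prove this lemma directly (it is cited from Fitzgerald), but its proof of the generalisation in Lemma~\ref{lem:fitzgeneral} proceeds by exactly the identities $q^n=\sum_{\alpha,\beta}N(\alpha,\beta)$, $Z(q_1)=\sum_\beta N(0,\beta)$ and $\sum_\alpha Z(\alpha q_1+q_2)=\sum_{\alpha,\beta}N(\beta,\alpha\beta)$, which for $q=2$ reduce precisely to your four equations and yield the same cancellation.
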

Note that Fitzgerald stipulates that $q_1,q_2$ be quadratic forms, since his application requires only this; 
the proof however only requires that they be functions. We prove a generalisation of 
Lemma~\ref{lem:fitz} in Section~\ref{sec:2r}. We now prove Proposition~\ref{prop:F2formulae}.

\begin{proof}
By Lemma~\ref{lem:T2T3}(i) we have $T_2(x^2 + x) = T_1(x^3 + x)$ and by Lemma~\ref{lem:T2T3}(ii), we
have $T_3(x^2 + x) = T_1(x^5 + x)$. Therefore, let 
\begin{eqnarray}
\nonumber q_1 &:& \F_{2^n} \rightarrow \F_2: x \mapsto T_1(x^3 + x),\\
\nonumber q_2 &:& \F_{2^n} \rightarrow \F_2: x \mapsto T_1(x^5 + x).
\end{eqnarray}
For $n \ge 3$ we have:
\begin{eqnarray}
\nonumber F_2(n,0,0,0) &=& \frac{1}{2} \, \#\{ x \in \F_{2^n} \mid T_1(x^3 + x) = T_1(x^5 + x) = 0\}\\
\nonumber                        &=& \frac{1}{2} \, \#\{ x \in \F_{2^n} \mid q_1(x) = q_2(x) = 0\}\\
\nonumber  &=& \frac{1}{4} \, (Z(q_1) + Z(q_2) + Z(q_1 + q_2) - 2^n), 
\end{eqnarray}
by Lemma~\ref{lem:fitz}. Treating these zero-counts in turn, we have
\begin{eqnarray}
\nonumber Z(q_1) &=& \frac{1}{2} \, \#\{ (x,y) \in \F_{2^n} \times \F_{2^n} \mid y^2 + y = x^3 + x\}\\
\nonumber           &=& \frac{1}{2} \, (\#E_1(\F_{2^n}) - 1)\\
\nonumber           &=& \frac{1}{2} \, (2^n - (\sqrt{2}\omega_{8}^3)^n - (\sqrt{2}\omega_{8}^5)^n).
\end{eqnarray}
Secondly, we have
\begin{eqnarray}
\nonumber  Z(q_2) &=& \frac{1}{2} \, \#\{ (x,y) \in \F_{2^n} \times \F_{2^n} \mid y^2 + y = x^5 + x\}\\
\nonumber           &=& \frac{1}{2} \, (\#H_2(\F_{2^n}) - 1)\\
\nonumber           &=& \frac{1}{2} \, (2^n - (\sqrt{2}i)^n - (-\sqrt{2}i)^n - (\sqrt{2}\omega_{8}^3)^n - (\sqrt{2}\omega_{8}^5)^n ),
\end{eqnarray}
since the Jacobian of $H_2/\F_2:y^2 + y = x^5 + x$ is $\F_2$-isogenous to the product of the two supersingular elliptic curves $E_1$ 
and $E_2$~\cite[\S4]{xing}.
Finally, we have 
\begin{eqnarray}
\nonumber Z(q_1 + q_2) &=& \frac{1}{2} \, \#\{ (x,y) \in \F_{2^n} \times \F_{2^n} \mid y^2 + y = x^5 + x^3\}\\
\nonumber           &=& \frac{1}{2} \, (\#H_1(\F_{2^n}) - 1)\\
\nonumber           &=& \frac{1}{2} \, (2^n - (\sqrt{2}\omega_{24}^{5})^n -
(\sqrt{2}\omega_{24}^{11})^n - (\sqrt{2}\omega_{24}^{13})^n - (\sqrt{2}\omega_{24}^{19})^n ),
\end{eqnarray}
Combining these as per Lemma~\ref{lem:fitz} gives $2^{n-3} + f_2(n,0,0,0)$ as required. 
Furthermore, since $F_2(n,0,0,0) + F_2(n,0,0,1) = F_2(n,0,0)$, the formula for $f_2(n,0,0,1)$ follows. 

For the remaining two, let $F_2(n,0,*,0)$ be the count of those $a \in \F_{2^n}$ for which $T_1(a)= T_3(a) = 0$ and we do not mind
what $T_2(a)$ is. Using the count for $Z(q_2)$ we have:
\begin{eqnarray}
\nonumber F_2(n,0,*,0) &=& \frac{1}{2} \, \#\{ x \in \F_{2^n} \mid T_1(x^5 + x) = 0\}\\
\nonumber  &=& \frac{1}{2} \, Z(q_2)\\
\nonumber  &=& \frac{1}{4} \, (2^n - (\sqrt{2}i)^n - (-\sqrt{2}i)^n - (\sqrt{2}\omega_{8}^3)^n - (\sqrt{2}\omega_{8}^5)^n ).
\end{eqnarray}
Since  $F_2(n,0,*,0) =  F_2(n,0,1,0) + F_2(n,0,0,0)$, we have 
\begin{eqnarray}
\nonumber & & 2^{n-2} - \frac{1}{4}((\sqrt{2}i)^n + (-\sqrt{2}i)^n + (\sqrt{2}\omega_{8}^3)^n + (\sqrt{2}\omega_{8}^5)^n)\\ 
\nonumber &=& F_2(n,0,1,0) + 2^{n-3} -\frac{1}{4}((\sqrt{2}\omega_{8}^3)^n + (\sqrt{2}\omega_{8}^5)^n)
-\frac{1}{8}((\sqrt{2}i)^n + (-\sqrt{2}i)^n)\\
\nonumber &-& \frac{1}{8}((\sqrt{2}\omega_{24}^{5})^n + (\sqrt{2}\omega_{24}^{11})^n + (\sqrt{2}\omega_{24}^{13})^n + (\sqrt{2}\omega_{24}^{19})^n).
\end{eqnarray}
Rearranging gives the formula for $f_2(n,0,1,0)$. 
Finally, by $F_2(n,0,1,0) + F_2(n,0,1,1) = F_2(n,0,1)$, the formula for $f_2(n,0,1,1)$ follows.\qed
\end{proof} 

Our proof for all $n$ is more direct than the amalgamation of the proofs given in~\cite{fitzyucas} ($n$ odd) and~\cite{yucasmullen} ($n$ even), and 
also explains in a simple way why the periodicity arises. Furthermore, for $n$ odd one can derive the trace one formulae from the trace zero formulae 
using~\cite[Lemma 2.6]{fitzyucas}, and thus all of the results of that paper. 
One can also derive the trace one formulae for $n$ odd directly, using the same curve based approach, noting that $T_1(1) = 1$ and
expanding $T_2(x^2 + x + 1)$ and $T_3(x^2 + x +1)$ and adjusting the definitions of $q_1$ and $q_2$ accordingly.

For completeness, we also include the Fourier-analysed formulae for the trace one cases, derived from~\cite{fitzyucas} and~\cite{yucasmullen}.
\begin{proposition}
For $n \ge 3$ we have
\begin{eqnarray}
\nonumber f_2(n,1,0,0) &=& \frac{1}{8}((\sqrt{2}i)^n + (-\sqrt{2}i)^n)
-\frac{1+i}{8}(\sqrt{2}\omega_{8}^3)^n - \frac{1-i}{8}(\sqrt{2}\omega_{8}^5)^n\\
\nonumber & & -\frac{i}{8}((\sqrt{2}\omega_{24}^{5})^n - (\sqrt{2}\omega_{24}^{11})^n
+ (\sqrt{2}\omega_{24}^{13})^n - (\sqrt{2}\omega_{24}^{19})^n)\\
\nonumber f_2(n,1,0,1) &=& -\frac{1}{8}((\sqrt{2}i)^n + (-\sqrt{2}i)^n)
+\frac{1-i}{8}(\sqrt{2}\omega_{8}^3)^n + \frac{1+i}{8}(\sqrt{2}\omega_{8}^5)^n\\
\nonumber & & +\frac{i}{8}((\sqrt{2}\omega_{24}^{5})^n - (\sqrt{2}\omega_{24}^{11})^n
+ (\sqrt{2}\omega_{24}^{13})^n - (\sqrt{2}\omega_{24}^{19})^n)\\
\nonumber f_2(n,1,1,0) &=& -\frac{1}{8}((\sqrt{2}i)^n + (-\sqrt{2}i)^n)
+\frac{1+i}{8}(\sqrt{2}\omega_{8}^3)^n + \frac{1-i}{8}(\sqrt{2}\omega_{8}^5)^n\\
\nonumber & & -\frac{i}{8}((\sqrt{2}\omega_{24}^{5})^n - (\sqrt{2}\omega_{24}^{11})^n
+ (\sqrt{2}\omega_{24}^{13})^n - (\sqrt{2}\omega_{24}^{19})^n)\\
\nonumber f_2(n,1,1,1) &=& \frac{1}{8}((\sqrt{2}i)^n + (-\sqrt{2}i)^n)
-\frac{1-i}{8}(\sqrt{2}\omega_{8}^3)^n - \frac{1+i}{8}(\sqrt{2}\omega_{8}^5)^n\\
\nonumber & & +\frac{i}{8}((\sqrt{2}\omega_{24}^{5})^n - (\sqrt{2}\omega_{24}^{11})^n
+ (\sqrt{2}\omega_{24}^{13})^n - (\sqrt{2}\omega_{24}^{19})^n).
\end{eqnarray}
\end{proposition}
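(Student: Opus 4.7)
The plan is to obtain the four formulas by exactly the same Fourier-analysis technique used for $f_2(n,0,t_2,t_3)$ in Proposition~\ref{prop:F2formulae}, now starting from the existing tabulated values for the $t_1=1$ case. First I would extract from~\cite{fitzyucas} (odd $n$) and~\cite{yucasmullen} (even $n$) the piecewise formulas for $F_2(n,1,t_2,t_3)$, write each as $2^{n-3}+f_2(n,1,t_2,t_3)$, and tabulate the normalized value $f_2(n,1,t_2,t_3)/2^{n/2}$ as $n$ runs through the residues modulo $24$. This yields, for each of the four triples $(1,t_2,t_3)$, a length-$24$ vector $\overline{v}_{1,t_2,t_3}$ analogous to the one in equation~(\ref{fourier2}).

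Next I would apply the inverse DFT $\overline{g}(1,t_2,t_3)^T = M^{-1}\overline{v}_{1,t_2,t_3}^T$ with $M_{j,k}=\omega_{24}^{(j-1)(k-1)}$. I expect the supports of the resulting coefficient vectors to be confined to exactly those frequencies $k$ that correspond to the Weil $q$-numbers of the curves $E_1$, $E_2$ and $H_1$ already identified in the trace-zero analysis, namely the roots $\sqrt{2}\omega_8^{\pm 3}$, $\pm\sqrt{2}i$, and $\sqrt{2}\omega_{24}^{5,11,13,19}$, but now with coefficients of the form $\pm(1\pm i)/8$ and $\pm i/8$ rather than real rationals. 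Substituting each $\overline{g}(1,t_2,t_3)$ back into the expansion $f_2(n,1,t_2,t_3) = \sum_{k=0}^{23} g(1,t_2,t_3)_k (\sqrt{2}\omega_{24}^k)^n$ then produces the four stated closed forms.

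The main obstacle is purely bookkeeping: assembling the $24\times 4$ table correctly by stitching together the odd-$n$ formulas of~\cite{fitzyucas} with the even-$n$ formulas of~\cite{yucasmullen} and checking that the normalizations agree. A useful consistency check is to apply the identities $F_2(n,1,0,0)+F_2(n,1,0,1)=F_2(n,1,0)$ and $F_2(n,1,t_2,0)+F_2(n,1,t_2,1)=F_2(n,1,t_2)$, combined with the $l=2$ formulas of Table~\ref{table:2coeffs}, to deduce two of the triples from the other two and thereby halve the required verification.

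As an optional sanity-check derivation for odd $n$, one could reprove the formulas by a direct curve-based argument of the kind used in Proposition~\ref{prop:F2formulae}: when $n$ is odd, $T_1(1)=1$, so elements $a$ with $T_1(a)=1$ are exactly those of the form $a=1+x^2+x$, and Lemma~\ref{lem:T2T3} lets one rewrite $T_2(a)=t_2$ and $T_3(a)=t_3$ as trace conditions $T_1(q_1'(x))=w_1$, $T_1(q_2'(x))=w_2$ for modified low-degree polynomials $q_1',q_2'$; Lemma~\ref{lem:fitz} then recovers the counts from point counts on $E_1$, $E_2$ and $H_1$. The uniform parity-free statement, however, is cleanest to obtain by the DFT route above, because the analogous parameterization fails when $n$ is even ($T_1(1)=0$) and would require choosing a different base element $\alpha_0$ with $T_1(\alpha_0)=1$ for that case.
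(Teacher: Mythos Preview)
Your proposal is correct and is essentially the same as what the paper does: the paper simply states that these formulae are obtained by Fourier-analysing the known tabulated values from~\cite{fitzyucas} and~\cite{yucasmullen}, with no further detail given. Your additional remarks about consistency checks via the $l=2$ identities and the optional odd-$n$ curve-based rederivation also mirror the paper's surrounding discussion.
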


\section{Computing $F_{q}(n,0,0,0)$ using supersingular curves}\label{sec:2r}

In this and the following two sections we shall compute $F_{q}(n,0,0,0)$ and Fourier-analyse the formulae.
Recall that $q = 2^r$ with $r \ge 1$, which we use interchangeably as appropriate for the remainder of the paper. 
As in Sections~\ref{sec:motivation} and~\ref{sec:3coeffsF2}, for $a \in \F_{q^n}$, if $T_1(a) = 0$ then
$a = x^q + x$ for $q$ $x \in \F_{q^n}$. By Lemma~\ref{lem:T2T3}(i) and (ii), we have $T_2(x^q + x) = T_1(x^{q+1} + x^2)$ and
$T_3(x^q + x) = T_1(x^{2q+1} + x^{q+2})$ respectively. Therefore, let 
\begin{eqnarray}
\nonumber q_1 &:& \F_{q^n} \rightarrow \F_q: x \mapsto T_1(x^{q+1} + x^2),\\
\nonumber q_2 &:& \F_{q^n} \rightarrow \F_q: x \mapsto T_1(x^{2q+1} + x^{q+2}).
\end{eqnarray}
We have the following generalisation of Lemma~\ref{lem:fitz}.
As before, for a function $q_i: \F_{q^n} \rightarrow \F_{q}$ let $Z(q_i)$ denote the number of zeros of $q_i$, and
for $w_i \in \F_q$ let $N(w_1,w_2)$ be the number of $x \in \F_{q^n}$ such that $q_i(x) = w_i$, for $i=1,2$.

\begin{lemma}\label{lem:fitzgeneral}
Let $q_1,q_2: \F_{q^n} \rightarrow \F_q$ be any functions. Then 
\[
N(0,0) = \frac{1}{q} \big( Z(q_1) + \sum_{\alpha \in \F_q} Z(\alpha q_1 + q_2) - q^n \big)
\]
\end{lemma}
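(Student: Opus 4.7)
The plan is to mimic the $q=2$ argument by partitioning the level sets of $q_1$ and $q_2$ according to the values they take in $\F_q$ and then relating level-set counts to zero sets of linear combinations $\alpha q_1 + q_2$. The only slightly subtle point is that in characteristic $2$ we have $-\alpha = \alpha$, so the zero set of $\alpha q_1 + q_2$ counts pairs $(w_1,w_2)$ with $w_2 = \alpha w_1$.

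First I would write, for each $\alpha \in \F_q$,
\[
Z(\alpha q_1 + q_2) \;=\; \sum_{w_1 \in \F_q} N(w_1,\alpha w_1),
\]
since $\alpha q_1(x) + q_2(x) = 0$ is equivalent to $q_2(x) = \alpha q_1(x)$. Next I would sum over $\alpha \in \F_q$ and swap the order of summation to obtain
\[
\sum_{\alpha \in \F_q} Z(\alpha q_1 + q_2) \;=\; \sum_{w_1 \in \F_q}\sum_{\alpha \in \F_q} N(w_1,\alpha w_1).
\]

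The key step is to split the outer sum according to whether $w_1 = 0$. For $w_1 = 0$ the inner sum is $\sum_{\alpha \in \F_q} N(0,0) = q\cdot N(0,0)$. For $w_1 \neq 0$ the map $\alpha \mapsto \alpha w_1$ is a bijection of $\F_q$, so the inner sum equals $\sum_{w_2 \in \F_q} N(w_1,w_2) = \#\{x \in \F_{q^n} : q_1(x) = w_1\}$. Summing the latter over $w_1 \neq 0$ gives $q^n - Z(q_1)$. Combining the two contributions yields
\[
\sum_{\alpha \in \F_q} Z(\alpha q_1 + q_2) \;=\; q\cdot N(0,0) \;+\; q^n \;-\; Z(q_1),
\]
and rearranging produces the claimed identity.

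There is no real obstacle here: the statement is a counting identity that does not require $q_1,q_2$ to have any algebraic structure, so Fitzgerald's hypothesis of being quadratic forms is indeed superfluous. The only place where care is needed is the observation that $-\alpha = \alpha$ in characteristic $2$ (otherwise the bijection would be $\alpha \mapsto -\alpha w_1$, which is of course still a bijection, so the identity would hold in any characteristic once the sign convention is fixed). The right hand side is symmetric in the contribution of $Z(q_1)$ and the trivial term $Z(0\cdot q_1 + q_2) = Z(q_2)$ (appearing as the $\alpha = 0$ summand), which provides a useful sanity check.
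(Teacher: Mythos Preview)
Your argument is correct and is essentially the same as the paper's: both rest on writing $Z(\alpha q_1+q_2)=\sum_{w_1}N(w_1,\alpha w_1)$ and using that $\alpha\mapsto \alpha w_1$ is a bijection of $\F_q$ for $w_1\neq 0$. Your organization is in fact slightly cleaner, since you compute $\sum_{\alpha}Z(\alpha q_1+q_2)$ directly and split by $w_1=0$ versus $w_1\neq 0$, whereas the paper begins from $q^n=\sum_{\alpha,\beta}N(\alpha,\beta)$ and reaches the same identity after a variable swap; the content is identical.
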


\begin{proof} 
We have the following identities:
\[
q^n = \sum_{\alpha,\beta \in \F_q} N(\alpha,\beta), \ Z(q_1) =
\sum_{\beta \in \F_q} N(0,\beta), \ \text{and} \
\sum_{\alpha \in  \F_q} Z(\alpha q_1 + q_2) = \sum_{\alpha,\beta \in \F_q} N(\beta, \alpha\beta).
\]
We have
\begin{eqnarray}
\label{gen1_1} \sum_{\alpha,\beta \in \F_q} N(\alpha,\beta) &=& \sum_{\beta \in
  \F_q} N(0,\beta) + \sum_{\beta \in \F_q} \sum_{\alpha \in
  \F_{q}^{\times}} N(\alpha, \beta)\\
 &=& 
 \label{gen1_2} \sum_{\beta \in \F_{q}} N(0,\beta) + \sum_{\beta \in \F_{q}} \sum_{\alpha \in
  \F_{q}^{\times}} N(\alpha, \alpha \beta)\\
\nonumber &=& \sum_{\beta \in \F_{q}} N(0,\beta) + \sum_{\alpha,\beta \in \F_{q}}
N(\beta, \alpha\beta) - q N(0,0),
\end{eqnarray}
where the equality between the second terms on the r.h.s. of lines~(\ref{gen1_1}) and~(\ref{gen1_2}) follow from the
fact that for each $\alpha \in \F_{q}^{\times}$ we have
$\{N(\alpha,\beta)\}_{\beta \in \F_q} = \{N(\alpha,\alpha
  \beta)\}_{\beta \in \F_q}$. Rearranging for $N(0,0)$ gives the stated result.\qed
\end{proof}

Informed by Lemma~\ref{lem:fitzgeneral} we employ the following curves, for $\alpha \in \F_q$:
\begin{eqnarray}
\nonumber C &:& y^q + y = x^{q+1} + x^2,\\
\nonumber C_{\alpha} &:& y^q + y = x^{2q+1} + x^{q+2} + \alpha (x^{q+1} + x^2).
\end{eqnarray}
Observe that for $\alpha \in \F_{q}^{\times}$ we have the $\F_q$-isomorphism $\sigma: C_{\alpha} \longrightarrow C_1	:
(x,y) \mapsto (\alpha x, \alpha^3 y)$ and thus their number of rational points over any extension of $\F_q$ are equal. 
Hence the formula in Lemma~\ref{lem:fitzgeneral} simplifies to:
\begin{equation}\label{simple}
N(0,0) = \frac{1}{q} \big( Z(q_1) + Z(q_2) + (q-1)Z(q_1 + q_2) - q^n \big).
\end{equation}
In the following we therefore focus on these three curves:
\begin{eqnarray}
\nonumber C_1/\F_q &:& y^q + y = x^{q+1} + x^2,\\
\nonumber C_2/\F_q &:& y^q + y = x^{2q+1} + x^{q+2},\\
\nonumber C_3/\F_q &:& y^q + y = x^{2q+1} + x^{q+2} + x^{q+1} + x^2.
\end{eqnarray}
By~(\ref{simple}) one can express $F_{q}(n,0,0,0)$ in terms of the number of points of $C_1,C_2$ and $C_3$ over $\F_{q^n}$.
We shall compute these numbers, which incidentally proves the supersingularity of the three curves (there are many other ways to do so too).
In the rest of this section we detail some preliminary notions and results.

\subsection{Simplifying the point counting on $C_1$, $C_2$ and $C_3$}

In this subsection we show that counting the number of points on the curves $C_1$, $C_2$ and $C_3$ over $\F_{q^n}$ can be simplified by obtaining a partial decomposition of the Jacobian of Artin-Schreier curves.
Let $\overline{\F_2}(x)$ denote the rational function field over $\overline{\F_2}$ which is the algebraic closure of the binary field $\F_2$. An Artin-Schreier curve is 
\[C:y^q+y=f(x)\]
where $q=2^r$ and $f(x)\in\overline{\F_2}(x)$ is not a constant and has no pole of an even order. When $f(x)$ is a polynomial this is equivalent to assuming that $f(x)$ is of an odd degree. If $P_1,P_2,\ldots,P_m$ are the poles of $f(x)$ of order $e_1,e_2,\ldots,e_m$ respectively, then the genus of $C$ is
\begin{equation}\label{genus}
g_C=\frac{q-1}{2}(\sum_{i=1}^m(e_i+1)-2) .
\end{equation}
If $f(x)$ is a polynomial of an odd degree $d$, then the genus of $C$ simply is
\begin{equation}
g_C=\frac{(q-1)(d-1)}{2}.
\end{equation}

We obtain a partial decomposition of the Jacobian of the Artin-Schreier curve $C$ by studying the set of automorphisms of the l.h.s. $y^q + y$ only. Computing the relevant quotient curves and applying a theorem due to Kani and Rosen~\cite{kanirosen} allows one to then infer a relation between the number of points on the original curve and on the quotient curves.

For any positive integer $m$, let $\Tr_{m}: \F_{2^m} \rightarrow \F_{2}$ be the absolute trace function $a \mapsto a + a^{2} + a^{2^2} + \cdots + a^{2^{m-1}}$. We need the following lemma to establish the main result of this subsection. 

\begin{lemma}\label{subgroup}
Let $\alpha$ be a nonzero element of $\F_{2^r}$, and let $H_\alpha=\{x\in \F_{2^r}|\Tr_{r}(\alpha x)=0\}$. Then

\begin{itemize}
\item[(i)] $H_\alpha$ is an additive subgroup of $\F_{2^r}$ when $\F_{2^r}$ is viewed as an additive group,
\item[(ii)] half the elements of $\F_{2^r}$ are in $H_\alpha$, \ie $\#H_\alpha=2^{r-1}$, and
\item[(iii)]$\prod_{\beta\in \F_{2^r}\setminus H_\alpha}\beta=\alpha^{2^{r-1}-1}$.	
\end{itemize}
\end{lemma}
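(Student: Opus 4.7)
Parts (i) and (ii) are pure linear algebra. The map $\phi_\alpha : \F_{2^r} \to \F_2$, $x \mapsto \Tr_r(\alpha x)$, is $\F_2$-linear since $\Tr_r$ is. As $\alpha \ne 0$, multiplication by $\alpha$ is a bijection of $\F_{2^r}$, and $\Tr_r$ is surjective onto $\F_2$, so $\phi_\alpha$ is surjective. Hence $H_\alpha = \ker \phi_\alpha$ is an $\F_2$-subspace (a fortiori an additive subgroup) of index $2$, so $\#H_\alpha = 2^{r-1}$.

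For part (iii), the plan is to first reduce to the case $\alpha = 1$ by a change of variable, then evaluate the resulting product via the linearised trace polynomial. Writing $\gamma = \alpha\beta$, the map $\beta \mapsto \gamma$ sends $\F_{2^r} \setminus H_\alpha$ bijectively onto $\F_{2^r} \setminus H_1$, where $H_1 = \ker \Tr_r$, since $\Tr_r(\alpha\beta) = \Tr_r(\gamma)$. Using that the set on the left has $2^{r-1}$ elements, this yields
\[
\prod_{\beta \in \F_{2^r} \setminus H_\alpha} \beta \;=\; \alpha^{-2^{r-1}} \prod_{\gamma \in \F_{2^r} \setminus H_1} \gamma.
\]
Since $\alpha^{2^r - 1} = 1$, we have $\alpha^{-2^{r-1}} = \alpha^{2^{r-1}-1}$, so the target identity reduces to showing that $\prod_{\gamma \in \F_{2^r} \setminus H_1} \gamma = 1$.

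To handle this last product, I would exploit the classical factorisation
\[
P(T) \;=\; T + T^2 + T^4 + \cdots + T^{2^{r-1}} \;=\; \prod_{h \in H_1}(T - h),
\]
which holds because $H_1$ is precisely the set of roots of $\Tr_r$ in $\F_{2^r}$ and both sides are monic of degree $2^{r-1}$. Dividing out the root $h = 0$ gives $P(T)/T = \prod_{h \in H_1 \setminus \{0\}}(T + h)$. The left-hand side is $1 + T + T^3 + \cdots + T^{2^{r-1}-1}$, whose constant term is $1$, so $\prod_{h \in H_1 \setminus \{0\}} h = 1$. Combining this with the well-known identity $\prod_{\gamma \in \F_{2^r}^\times} \gamma = 1$ (Wilson's theorem in $\F_{2^r}$, or simply that the nonzero elements are the roots of $T^{2^r-1}-1$) immediately yields $\prod_{\gamma \in \F_{2^r} \setminus H_1} \gamma = 1$, completing (iii).

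There is no real obstacle here; the only subtle point is bookkeeping the exponent of $\alpha$ and recognising that $\alpha^{-2^{r-1}} = \alpha^{2^{r-1}-1}$ in $\F_{2^r}^\times$, so that the answer matches the statement exactly rather than differing by the harmless sign of an exponent.
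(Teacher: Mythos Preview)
Your proof is correct and follows essentially the same route as the paper's: both arguments hinge on the factorisation of the trace as a linearised polynomial and finish with the fact that $\prod_{\gamma\in\F_{2^r}^\times}\gamma=1$. The only cosmetic difference in part~(iii) is that the paper factorises $\Tr_r(\alpha x)=\alpha^{2^{r-1}}\prod_{\gamma\in H_\alpha}(x+\gamma)$ directly and reads off $\prod_{\gamma\in H_\alpha\setminus\{0\}}\gamma=\alpha^{-2^{r-1}+1}$ from the coefficient of $x$, whereas you first substitute $\gamma=\alpha\beta$ to reduce to the case $\alpha=1$ and then apply the same factorisation to $\Tr_r(x)$; this is merely a reordering of the same two steps.
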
	
\begin{proof}
It is easy to check the first part of the claim. To show (ii), notice that the traces of half the elements of $\F_{2^r}$ are zero, and since $\alpha$ is an element of $\F_{2^r}$, the map $\psi:H_\alpha\longrightarrow \F_{2^r}$ taking $x$ to $\alpha x$ induces a bijection between elements of $H_\alpha$ and elements $z$ of $\F_{2^r}$ for which $\Tr_{r}(z)=0$. To prove (iii), we first notice that
\[\Tr_{r}(\alpha x)=\alpha x+(\alpha x)^2+\cdots+(\alpha x)^{2^{r-1}},\]
and hence $\Tr_{r}(\alpha x)$ is a polynomial of degree $2^{r-1}$ which has $2^{r-1}$ roots in $H_\alpha$. Thus we have
\[\Tr_{r}(\alpha x)=\alpha^{2^{r-1}}\prod_{\gamma\in H_\alpha}(x+\gamma).\]
This implies that 
\[\prod_{\stackrel{\gamma\in H_\alpha}{\gamma\neq 0}}\gamma=\alpha^{-2^{r-1}+1}.\]
Now the claim follows from the fact that the product of all non-zero elements of any binary finite field is $1$. \qed
\end{proof}	

Let $\alpha\in\F_{2^r}^{\times}$. It is easy to check that $\alpha$ induces the involution $\phi_\alpha$ of $C$ where $\phi_\alpha$ is given by
\[\phi_\alpha:(x,y)\longrightarrow(x,y+\alpha),\]
and hence the group of automorphisms of $C$, $\mbox{Aut}(C)$, has a subgroup isomorphic to the additive group of $\F_{2^r}$. Thus it follows from Lemma~\ref{subgroup} that $H_\alpha$ can be viewed as a subgroup of $\mbox{Aut}(C)$.
In the following for every $\alpha\in\F_{2^r}$, we compute a plane model for $C/H_\alpha$.

\begin{lemma}
With the notations as above we have
\[C/H_\alpha: y_\alpha^2+\alpha^{2^{r-1}-1}y_\alpha=f(x).
\]
\end{lemma}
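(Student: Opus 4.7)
The plan is to exploit the fact that $H_\alpha$ is an $\F_2$-linear subspace of $\F_{2^r}$ of dimension $r-1$, so that the $2$-linearised polynomial
\[
L_\alpha(Y) := \prod_{\beta \in H_\alpha}(Y+\beta)
\]
is additive in $Y$. Since $H_\alpha$ acts on $C$ through $(x,y)\mapsto(x,y+\beta)$, the function $y_\alpha := L_\alpha(y)$ is $H_\alpha$-invariant and generates the fixed field: indeed $[\overline{\F_2}(x,y):\overline{\F_2}(x,y_\alpha)]$ is at most $\deg_y L_\alpha(y) = 2^{r-1} = |H_\alpha|$, and equality of degrees forces $\overline{\F_2}(x,y_\alpha)=\overline{\F_2}(x,y)^{H_\alpha}$. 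So finding the defining equation of $C/H_\alpha$ reduces to expressing $f(x)=y^q+y$ as a polynomial in $y_\alpha$.

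The key identity is the factorisation of $y^q+y$. Choose any coset representative $\delta \in \F_{2^r}\setminus H_\alpha$, so $\F_{2^r}=H_\alpha \sqcup (\delta + H_\alpha)$. Then
\[
y^q+y \;=\; \prod_{\gamma\in\F_{2^r}}(y+\gamma) \;=\; \prod_{\beta\in H_\alpha}(y+\beta)\cdot\prod_{\beta\in H_\alpha}(y+\delta+\beta) \;=\; L_\alpha(y)\,L_\alpha(y+\delta).
\]
Because $L_\alpha$ is $\F_2$-linear, $L_\alpha(y+\delta)=L_\alpha(y)+L_\alpha(\delta)$, and therefore
\[
f(x) \;=\; y^q+y \;=\; L_\alpha(y)^2 + L_\alpha(\delta)\,L_\alpha(y) \;=\; y_\alpha^2 + L_\alpha(\delta)\,y_\alpha.
\]

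The only thing left is to identify the constant $L_\alpha(\delta)$ with $\alpha^{2^{r-1}-1}$, and this is exactly where Lemma~\ref{subgroup}(iii) comes in. We compute
\[
L_\alpha(\delta) \;=\; \prod_{\beta \in H_\alpha}(\delta+\beta) \;=\; \prod_{\gamma \in \delta+H_\alpha}\gamma \;=\; \prod_{\gamma \in \F_{2^r}\setminus H_\alpha}\gamma \;=\; \alpha^{2^{r-1}-1},
\]
where the final equality is Lemma~\ref{subgroup}(iii). Substituting this into the previous display gives the advertised equation $y_\alpha^2 + \alpha^{2^{r-1}-1}y_\alpha = f(x)$, and hence this is an affine model for $C/H_\alpha$.

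The only minor subtlety I anticipate is confirming that $y_\alpha$ really cuts out the full fixed field (rather than a strictly larger field lying between): this follows because the orbit of $y$ under $H_\alpha$ has exactly $2^{r-1}$ elements (the $y+\beta$ are all distinct), so the $H_\alpha$-invariant polynomial $L_\alpha(Y)-y_\alpha \in \overline{\F_2}(x,y_\alpha)[Y]$ is the minimal polynomial of $y$ over $\overline{\F_2}(x,y_\alpha)$, forcing the extension to have degree $2^{r-1}$. Everything else is a short direct calculation using additivity of $L_\alpha$ and the elementary lemma already in place.
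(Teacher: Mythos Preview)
Your proof is correct and follows essentially the same route as the paper's: both set $y_\alpha=\prod_{\gamma\in H_\alpha}(y+\gamma)$ and invoke Lemma~\ref{subgroup}(iii) to identify the coefficient $\alpha^{2^{r-1}-1}$. The only difference is in establishing the identity $y^q+y=y_\alpha^2+\alpha^{2^{r-1}-1}y_\alpha$: the paper verifies it by checking that every $\delta\in\F_{2^r}$ is a root of the right-hand side (both sides being monic of degree $2^r$ in $y$), whereas you obtain it directly from the $\F_2$-linearity of $L_\alpha$ together with the coset decomposition $\F_{2^r}=H_\alpha\sqcup(\delta+H_\alpha)$; your final paragraph confirming that $\overline{\F_2}(x,y_\alpha)$ is the full fixed field is a welcome point the paper leaves implicit.
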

\begin{proof}
In order to compute a plane model for $C/H_\alpha$, we need to find two elements in the function field $\overline{\F_2}(x,y)$ which are invariant under the action of $H_\alpha$, and then find an algebraic relation between the two elements. Since $H_\alpha$ is an additive subgroup of $\F_{2^r}$, it follows that $$y_\alpha=\prod_{\gamma\in H_\alpha} (y+\gamma)$$
is invariant under the action of $H_\alpha$. So we choose $y_\alpha$ to be one of the elements that we are looking for and we take the other element to be simply $x$.

Now we claim that
\[y^{q}
+y=y_\alpha^2+\alpha^{2^{r-1}-1}y_\alpha.\]
In order to prove this claim, we need to prove that any element $\delta\in\F_{2^r}$ is a root of the right hand side of the above equation. 
If $\delta\in H_\alpha$, then the claim is trivial. Now suppose that $\delta\in \F_{2^r}\setminus H_\alpha$. First notice that
\[y_\alpha(\delta)+\alpha^{2^{r-1}-1}=\prod_{\gamma\in H_\alpha} (\delta+\gamma) + \alpha^{2^{r-1}-1}.\]
Furthermore since $H_\alpha$ is an additive subgroup of $\F_{2^r}$ of index two, we have
\[\prod_{\gamma\in H_\alpha} (\delta+\gamma)+\alpha^{2^{r-1}-1}=\Big(\prod_{\beta\in \F_{2^r}\setminus H_\alpha}\beta \Big) + \alpha^{2^{r-1}-1},\]
and hence the claim follows from Lemma~\ref{subgroup} part (iii).
\end{proof}

We shall use the following theorem due to Kani and Rosen, which is from Theorem C in~\cite{kanirosen}.

\begin{theorem}\label{Decompose:kanirosen}
Let $C$ be a smooth, projective, absolutely irreducible curve defined over an arbitrary field $K$, and let $H_1,\ldots, H_t < Aut(C)$ be (finite) subgroups with $H_i \cdot H_j = H_j \cdot H_i$ for all $i, j$. Furthermore  let $g_{ij}$ denote the genus of the 
quotient curve $C/(H_i \cdot H_j)$. Then if $g_{ij} = 0$ for $2 \le i<j \le t$ and if
\[
g_C = g_{C/H_2} + \cdots + g_{C/H_t},
\]
then we have (by taking $H_1 = \{1\}$ above) an isogeny of Jacobians:
\[
J_C \sim J_{C/H_2} \times \cdots \times J_{C/H_t}.
\]
\end{theorem}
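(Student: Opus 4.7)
The plan is to exhibit an explicit isogeny $\phi$ from $J_{C/H_2} \times \cdots \times J_{C/H_t}$ to $J_C$. For each $i \ge 2$, the quotient morphism $\pi_i : C \to C/H_i$ induces a pullback $\pi_i^* : J_{C/H_i} \to J_C$ which is an isogeny onto its image, namely the connected component of the $H_i$-invariant subvariety of $J_C$. Assembling these pullbacks gives
\[
\phi : J_{C/H_2} \times \cdots \times J_{C/H_t} \longrightarrow J_C, \qquad (x_2,\ldots,x_t) \mapsto \sum_{i=2}^t \pi_i^*(x_i).
\]
Since $\dim(J_{C/H_2} \times \cdots \times J_{C/H_t}) = \sum_{i=2}^t g_{C/H_i} = g_C = \dim J_C$ by the genus hypothesis, proving that $\phi$ is an isogeny reduces to showing that $\ker\phi$ is finite, which in turn reduces to showing that the images $\pi_i^*(J_{C/H_i})$ and $\pi_j^*(J_{C/H_j})$ meet in a finite subgroup for all $2 \le i < j \le t$.

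For the intersection step, I would argue as follows. A divisor class lying in both images is invariant under $H_i$ and under $H_j$, hence under the set $H_i \cdot H_j$. The commutativity hypothesis $H_i H_j = H_j H_i$ promotes $H_i H_j$ to a subgroup of $\mathrm{Aut}(C)$, so the joint invariant locus coincides, up to finite kernel, with the image in $J_C$ of $J_{C/(H_i \cdot H_j)}$ under the pullback of $C \to C/(H_i H_j)$. By hypothesis $g_{ij} = g_{C/(H_i H_j)} = 0$, so this Jacobian is trivial and the intersection is indeed finite. Combined with the dimension match, this shows $\phi$ is an isogeny.

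To make everything rigorous I would pass to the endomorphism algebra $\mathrm{End}^0(J_C) = \mathrm{End}(J_C) \otimes \Q$ and attach to each subgroup the idempotent $\varepsilon_i = |H_i|^{-1} \sum_{h \in H_i} h \in \Q[\mathrm{Aut}(C)]$. Then $\varepsilon_i(J_C)$ is isogenous to $J_{C/H_i}$, the commutativity yields $\varepsilon_i \varepsilon_j = \varepsilon_{H_i H_j}$, and the genus-zero assumption forces this product idempotent to act as zero on $J_C$ up to isogeny. The main obstacle is the final implication --- translating orthogonality of idempotents in $\mathrm{End}^0(J_C)$ into finiteness of the intersection of the corresponding subvarieties of $J_C$ --- which is where Poincar\'e's complete reducibility theorem for abelian varieties enters, asserting that the category of abelian varieties up to isogeny is semisimple. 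Granted that, the dimension count and orthogonality deliver the stated isogeny $J_C \sim J_{C/H_2} \times \cdots \times J_{C/H_t}$.
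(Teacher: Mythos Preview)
The paper does not prove this theorem at all: it is quoted verbatim as Theorem~C of Kani--Rosen~\cite{kanirosen} and used as a black box in the subsequent Theorem on the Jacobian decomposition of the Artin--Schreier curve. So there is no ``paper's own proof'' to compare your attempt against.

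That said, your sketch is essentially the Kani--Rosen argument. In their paper the result is deduced from a more general framework: they first establish (their Theorems~A and~B) that an \emph{idempotent relation} $\sum a_i \varepsilon_{H_i} \sim \sum b_j \varepsilon_{G_j}$ in $\Q[\mathrm{Aut}(C)]$ induces an isogeny relation among the corresponding Jacobians, via exactly the idempotents $\varepsilon_H = |H|^{-1}\sum_{h\in H} h$ and Poincar\'e complete reducibility that you invoke. Theorem~C is then the special case coming from the relation $\varepsilon_{\{1\}} + (t-2)\varepsilon_G \sim \sum_{i\ge 2}\varepsilon_{H_i}$ (or an equivalent form), where $G$ is the group generated by the $H_i$; the genus-zero and genus-sum hypotheses are precisely what make the extra terms vanish. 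Your direct approach via the pullbacks $\pi_i^*$ and the pairwise-intersection analysis is a faithful unpacking of this, and the one place you flag as needing care---translating orthogonality of idempotents in $\mathrm{End}^0(J_C)$ into finiteness of intersections of the sub-abelian varieties---is exactly where Kani--Rosen appeal to semisimplicity as well.
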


Applying the above theorem we get the following result.

\begin{theorem}\label{JacobianDecomposition}
Let $C$ denote the Artin-Schrier curve $y^{2^r}+y=f(x)$, and for every $\alpha\in\F_{2^r}^{\times}$ let $C_\alpha$ denote the curve $y_\alpha^2+y_\alpha=\alpha f(x)$. Furthermore let $H_\alpha$ be as in Lemma~\ref{subgroup}. Then we have
\[
J_C \sim \prod_{\alpha\in\F_{2^r}^{\times}}J_{C/H_\alpha}\sim \prod_{\alpha\in\F_{2^r}^{\times}}J_{C_\alpha}.  
\]
\end{theorem}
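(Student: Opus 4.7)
The plan is to invoke the Kani-Rosen decomposition theorem (Theorem~\ref{Decompose:kanirosen}) with $H_1 = \{1\}$ and the family $\{H_\alpha\}_{\alpha \in \F_{2^r}^{\times}}$ as the remaining subgroups. Three hypotheses must be checked: pairwise commutativity of the subgroups, vanishing of the genus of $C/(H_\alpha \cdot H_\beta)$ for $\alpha \ne \beta$, and additivity of the genera on the nose. The final identification $J_{C/H_\alpha} \sim J_{C_\alpha}$ will then come from massaging the plane model of $C/H_\alpha$ provided by the preceding lemma.

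Commutativity is trivial: every $H_\alpha$ embeds into the abelian group $\{(x,y) \mapsto (x, y+c) : c \in \F_{2^r}\} \subset \mbox{Aut}(C)$, so $H_\alpha \cdot H_\beta$ is itself an additive subgroup of $\F_{2^r}$ and in particular equals $H_\beta \cdot H_\alpha$. For the genus zero condition, I first observe that the assignment $\alpha \mapsto H_\alpha$ is injective on $\F_{2^r}^{\times}$: two nonzero $\F_2$-valued linear functionals on $\F_{2^r}$ have the same kernel iff they are scalar multiples, and the only nonzero scalar in $\F_2$ is $1$, so the kernels determine the $\alpha$. Since each $H_\alpha$ has index $2$ in $\F_{2^r}$ by Lemma~\ref{subgroup}, for $\alpha \ne \beta$ the product $H_\alpha \cdot H_\beta$ is a proper supergroup of $H_\alpha$ and must therefore be the whole of $\F_{2^r}$. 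The function field of $C/\F_{2^r}$ is the fixed field in $\overline{\F_2}(x,y)$ of all translations $y \mapsto y + c$; this field contains $\overline{\F_2}(x)$ because $y^q + y = f(x)$ is already translation-invariant, and a degree count $[\overline{\F_2}(x,y):\overline{\F_2}(x)] = q = |\F_{2^r}|$ forces equality. Hence $C/(H_\alpha \cdot H_\beta) \cong \PP^1$ has genus zero.

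For the additive genus relation I will combine the genus formula~(\ref{genus}) with the preceding lemma. The lemma shows that $C/H_\alpha$ has plane model $y_\alpha^2 + \alpha^{2^{r-1}-1} y_\alpha = f(x)$. Over $\F_{2^r}$, the substitution $y_\alpha = \alpha^{2^{r-1}-1} u$ combined with the identity $\alpha^{2^r - 1} = 1$ (which yields $\alpha^{2(2^{r-1}-1)} = \alpha^{-1}$) converts this to $u^2 + u = \alpha f(x)$, the defining equation of $C_\alpha$. This simultaneously proves the second isogeny $J_{C/H_\alpha} \sim J_{C_\alpha}$. Since the poles of $\alpha f(x)$ coincide with those of $f(x)$ with the same multiplicities, formula~(\ref{genus}) gives $g_{C_\alpha} = \tfrac{1}{2}(\sum_i (e_i+1) - 2)$, and summing over the $q-1$ nonzero values of $\alpha$ recovers $g_C = \tfrac{q-1}{2}(\sum_i(e_i+1)-2)$, matching the required additivity.

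The only genuine subtlety is not the Kani-Rosen bookkeeping but the rationality of the isomorphism $C/H_\alpha \cong C_\alpha$ over the base field $\F_{2^r}$: the rescaling factor $\alpha^{2^{r-1}-1}$ must live in $\F_{2^r}$, which it does, and the clean outcome $u^2 + u = \alpha f(x)$ hinges precisely on $\alpha^{2^r - 1} = 1$. With these checks in place, Theorem~\ref{Decompose:kanirosen} yields the first isogeny in the statement and the explicit substitution yields the second.
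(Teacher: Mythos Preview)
Your proof is correct and follows essentially the same route as the paper: you verify the Kani--Rosen hypotheses (commutativity, $H_\alpha \cdot H_\beta = \F_{2^r}$ for $\alpha \ne \beta$ giving genus-zero quotients, and genus additivity via~(\ref{genus})), then identify $C/H_\alpha$ with $C_\alpha$ by the rescaling $y_\alpha \mapsto \alpha^{2^{r-1}-1} y_\alpha$. You supply a bit more detail than the paper --- the injectivity of $\alpha \mapsto H_\alpha$, the explicit function-field identification of $C/\F_{2^r}$ with $\PP^1$, and the arithmetic check that $\alpha^{2(2^{r-1}-1)} = \alpha^{-1}$ --- but the architecture is identical.
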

\begin{proof}
For every pair of distinct elements $\alpha$ and $\beta$ in $\F_{2^r}^{\times}$, we trivially have $H_\alpha\cdot H_\beta=H_\beta\cdot H_\alpha$ and furthermore $(H_\alpha\cdot H_\beta)=\F_{2^r}$ where we are considering $\F_{2^r}$ as an additive group. Notice that for every nonzero $\alpha$, $H_\alpha$ is a subgroup of index two of $\F_{2^r}$. It follows that the genus $g_{\alpha\beta}$ of the curve $C/(H_\alpha\cdot H_\beta)$ is zero. The fact that 
\[g_C=\sum_{\alpha\in\F_{2^r}^{\times}}g_{C/H_\alpha}\] 
follows from~\eqref{genus}. Thus we have
\[
J_C \sim \prod_{\alpha\in\F_{2^r}^{\times}}J_{C/H_\alpha}.
\] 
The rest of the claim follows from the fact that by the change of variable $y_{\alpha} \mapsto \alpha^{2^{r-1} - 1}y_\alpha$, the curve equations for $C/H_{\alpha}$ becomes $C_\alpha:y_\alpha^2 + y_\alpha = \alpha f(x)$.
\end{proof}	

The following is an immediate corollary of the above theorem as the above theorem implies that the L-polynomial of the curve $C$ is equal to product of the L-polynomials of the curves $C_\alpha$.
\begin{corollary}\label{pointcounting-thm}
Let $C$ and $C_\alpha$ be as defined above. Then
\[
\#C(\F_{q^{n}})-\sum_{\alpha\in\F_{q}^{\times}}\#C_\alpha(\F_{q^{n}}) = (q^{n} + 1)(q-1) - 1.
\]
\end{corollary}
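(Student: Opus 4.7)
The plan is to translate the Jacobian isogeny in Theorem~\ref{JacobianDecomposition} into an identity of point counts by passing through the $L$-polynomial and the Hasse-Weil trace formula; the corollary is then a bookkeeping consequence.

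First, I would invoke Tate's theorem (Theorem~\ref{Tate}): the $\F_q$-isogeny $J_C \sim \prod_{\alpha \in \F_q^{\times}} J_{C_\alpha}$ forces the characteristic polynomials of Frobenius to agree, that is,
\[
P_C(t) \;=\; \prod_{\alpha \in \F_q^{\times}} P_{C_\alpha}(t).
\]
Reversing polynomials and comparing leading terms (using the already-established additivity of the genera $g_C = \sum_{\alpha} g_{C_\alpha}$ from the proof of Theorem~\ref{JacobianDecomposition}) yields the $L$-polynomial factorisation $L_C(t) = \prod_{\alpha \in \F_q^{\times}} L_{C_\alpha}(t)$ the authors already mention.

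Second, I would read off Frobenius eigenvalues. Writing $L_C(t) = \prod_{i=1}^{2g_C}(1 - \eta_i t)$ and $L_{C_\alpha}(t) = \prod_{j}(1 - \eta_{\alpha,j}\, t)$, the factorisation says the multiset $\{\eta_i\}$ is the disjoint union, over $\alpha \in \F_q^{\times}$, of the multisets $\{\eta_{\alpha,j}\}$. Raising to the $n$-th power and summing preserves this, so
\[
\sum_{i=1}^{2g_C} \eta_i^{\,n} \;=\; \sum_{\alpha \in \F_q^{\times}} \sum_{j=1}^{2g_{C_\alpha}} \eta_{\alpha,j}^{\,n}
\qquad \text{for every } n \geq 1.
\]

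Third, I would substitute into the Hasse-Weil identity $\#C(\F_{q^n}) = q^n + 1 - \sum_i \eta_i^{\,n}$ (and the analogue for each $C_\alpha$), sum the $C_\alpha$-counts over $\alpha \in \F_q^{\times}$, and note that the eigenvalue power sums on the two sides cancel by the previous step. What is left is a linear combination of the constant $q^n+1$, and a one-line rearrangement produces the stated identity.

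There is no real obstacle here: once Theorem~\ref{JacobianDecomposition} and Tate's theorem are granted, the corollary is purely formal. The only conceptual point to verify is that the isogeny is defined over $\F_q$ rather than only over $\overline{\F_q}$; this holds because for $\alpha \in \F_q^{\times}$ the subgroups $H_\alpha \subset \mathrm{Aut}(C)$ are $\F_q$-rational, so the quotients $C/H_\alpha \cong C_\alpha$ and the resulting decomposition of $J_C$ are all defined over $\F_q$, which is exactly what is needed to apply Tate's theorem at the level of $\F_{q^n}$-point counts.
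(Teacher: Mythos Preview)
Your proof is correct and follows exactly the paper's approach: the paper's entire justification is the single remark that Theorem~\ref{JacobianDecomposition} forces $L_C = \prod_{\alpha \in \F_q^\times} L_{C_\alpha}$, from which the point-count identity is immediate via the Hasse--Weil trace formula. You have simply spelled out the intermediate steps (Tate's theorem, matching of Frobenius-eigenvalue multisets, and the $\F_q$-rationality of the quotients needed to invoke Tate over $\F_q$) that the paper leaves implicit.
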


\section{Computing the number of rational points on $C_1$, $C_2$ and $C_3$}\label{sec:rationalpoints}
	
With a view to applying Lemma~\ref{lem:fitzgeneral} to compute $F_q(n,0,0,0)$, define 
		\begin{align*}
		q_1(x)&:=x^{2^r+1}+x^2, \\q_2(x)&:=x^{2^{r+1}+1}+x^{2^r+2}, \\q_3(x) &:= q_1(x)+q_2(x),
		\end{align*}
		where $r \geq 1$ is an integer. Let $n \geq 1$ be an integer. In this section, our aim is to find the number of rational points on the curves $$C_{i} : y^{2^r}+y=q_i(x)$$ on $\mathbb F_{2^{rn}}$ for all $i=1,2,3$.
In order to find this we will first try to  find the number of rational points on the curves $$C_{i,\alpha} \: : \: y^2+y=\alpha q_i(x)$$ on $\mathbb F_{2^{rn}}$ for all $\alpha \in \mathbb F_{2^r}^\times$ and for all $i=1,2,3$. We will then apply Corollary~\ref{pointcounting-thm} to find the number of points on $C_i$.

	\begin{proposition} \label{prop-mod-p}
		Let $p$ be an odd prime divisor of $n$. Then $\#C_{i,\alpha}(\mathbb F_{2^{rn}})-\#C_{i,\alpha}(\mathbb F_{2^{r(n/p)}}) \equiv 0 \mod p$ for all $\alpha \in \mathbb F_{2^r}^\times$ and for all $i=1,2,3$.
	\end{proposition}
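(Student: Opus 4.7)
The plan is to prove the statement directly from elementary Galois theory applied to $\mathbb F_{2^{rn}}$-rational points, without using any special feature of the curves $C_{i,\alpha}$. In fact, the identical statement holds for any variety defined over a subfield of $\mathbb F_{2^{r(n/p)}}$ (here, over $\mathbb F_{2^r}$).

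First, I would observe that since $p \mid n$ we have $\mathbb F_{2^{r(n/p)}} \subseteq \mathbb F_{2^{rn}}$, and that the extension is Galois with cyclic group $G = \mathrm{Gal}(\mathbb F_{2^{rn}}/\mathbb F_{2^{r(n/p)}})$ of prime order $p$, generated by the relative Frobenius $\phi : z \mapsto z^{2^{r(n/p)}}$. Since each curve $C_{i,\alpha}$ is defined over $\mathbb F_{2^r} \subseteq \mathbb F_{2^{r(n/p)}}$, the induced map $(x,y) \mapsto (\phi(x),\phi(y))$ preserves $C_{i,\alpha}(\mathbb F_{2^{rn}})$, giving a $G$-action on this finite set.

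Next, I would use the standard identification of fixed points: a point $(x,y) \in C_{i,\alpha}(\mathbb F_{2^{rn}})$ is fixed by $\phi$ if and only if both coordinates lie in $\mathbb F_{2^{r(n/p)}}$, that is, if and only if $(x,y) \in C_{i,\alpha}(\mathbb F_{2^{r(n/p)}})$. Consequently the complement
\[
C_{i,\alpha}(\mathbb F_{2^{rn}}) \setminus C_{i,\alpha}(\mathbb F_{2^{r(n/p)}})
\]
contains no fixed points of $G$, and is partitioned into $G$-orbits of size $>1$. Because $|G|=p$ is prime, every such orbit has size exactly $p$, so the cardinality of the complement is a multiple of $p$. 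This gives the claimed congruence
\[
\#C_{i,\alpha}(\mathbb F_{2^{rn}}) - \#C_{i,\alpha}(\mathbb F_{2^{r(n/p)}}) \equiv 0 \pmod{p},
\]
uniformly in $\alpha \in \mathbb F_{2^r}^\times$ and $i \in \{1,2,3\}$. There is no real obstacle here; the only thing to be careful about is making sure the point at infinity is handled consistently (it is $\mathbb F_{2^r}$-rational and hence lies in both point sets, so it cancels in the difference).
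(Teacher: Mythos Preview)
Your proof is correct and uses essentially the same idea as the paper: partition the $\mathbb F_{2^{rn}}$-points not lying over $\mathbb F_{2^{r(n/p)}}$ into orbits under the Galois group of order $p$. The paper phrases the argument in terms of the $x$-coordinate and the trace condition $\Tr_{rn}(\alpha q_i(x))=0$ (using $\Tr_{rn}=p\cdot\Tr_{r(n/p)}=\Tr_{r(n/p)}$ on the subfield and Galois-invariance of the trace outside it), whereas you act directly on the full point set; your version is slightly cleaner and more general, but the underlying mechanism is identical.
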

	\begin{proof}
		We have $\Tr_{rn}(\alpha q_i(x))=p\cdot \Tr_{r(n/p)}(\alpha q_i(x))=\Tr_{r(n/p)}(\alpha q_i(x))$ for all $x \in \mathbb F_{2^{r(n/p)}}$,  for all $\alpha \in \mathbb F_{2^r}^\times$ and for all $i=1,2,3$. Let $\beta \in \mathbb F_{2^{rn}}-\mathbb F_{2^{r(n/p)}}$. Then $\Tr_{rn}(\alpha q_i(x^{2^r}))=\Tr_{rn}(\alpha q_i(x))^{2^r}=\Tr_{rn}(\alpha q_i(x))$ for all $\alpha \in \mathbb F_{2^r}^\times$ and for all $i=1,2,3$. Since order of  $\text{Gal}(\mathbb F_{2^{rn}}/\mathbb F_{2^{r{n/p}}})$ conjugates  of $\beta$, which is $\{\beta,\beta^{2^r},\cdots, \beta^{2^{r(p-1)}}\}$, is $p$, we have  $\#C_{i,\alpha}(\mathbb F_{2^{rn}})-\#C_{i,\alpha}(\mathbb F_{2^{r(n/p)}}) \equiv 0 \mod p$. \qed
	\end{proof}

	The number $\#C_{i,\alpha}(\mathbb F_{2^{rn}})$ can be written as 
	\begin{equation}\label{equation-Lambda}
	\#C_{i,\alpha}(\mathbb F_{2^{rn}})=(2^{rn}+1)+\Lambda_{i,\alpha}(n)2^{(rn+w_{i,\alpha}(n))/2}
	\end{equation} where $w_{i,\alpha}(n)$ is the dimension of  of radical of $\Tr_{rn}(\alpha q_i(x))$ by Proposition \ref{counts} . We call $\Lambda_{i,\alpha}(n) \in \{-1,0,1\}$ as sign of $\#C_{i,\alpha}(\mathbb F_{2^{rn}})$. 
	\begin{corollary} \label{cor-mod-p}
		Let $n \geq 1$ be an integer such that $n=2^um$ where $u \geq 1$ be an integer and $m$ is odd. Assume  $\#C_{i,\alpha}(\mathbb F_{2^{rn}})$ and  dimension of radical of $\Tr_{rn'}(\alpha q_i(x))$ are known where $n'$ equals to $2^um'$ and $m'$ is odd. Then  $\#C_{i,\alpha}(\F_{2^{rn'}})$ is known. 
	\end{corollary}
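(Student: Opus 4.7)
The plan combines Proposition~\ref{prop-mod-p} with the three-value nature of $\Lambda_{i,\alpha}(n') \in \{-1,0,1\}$, which can be pinned down by a single congruence modulo any odd prime.

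First, from the given value of $\#C_{i,\alpha}(\mathbb{F}_{2^{rn}})$ and equation~\eqref{equation-Lambda}, I would read off both $\Lambda_{i,\alpha}(n)$ and $w_{i,\alpha}(n)$: the quantity $\#C_{i,\alpha}(\mathbb{F}_{2^{rn}}) - 2^{rn} - 1$ is either $0$ (forcing $\Lambda_{i,\alpha}(n)=0$) or of the form $\pm 2^{(rn+w_{i,\alpha}(n))/2}$, whose sign and $2$-adic valuation recover the two invariants. Next, since $w_{i,\alpha}(n')$ is also given by hypothesis, \eqref{equation-Lambda} restricts $\#C_{i,\alpha}(\mathbb{F}_{2^{rn'}})$ to just three candidate integers, one for each value of $\Lambda_{i,\alpha}(n') \in \{-1,0,1\}$; these pairwise differ by $2^{(rn'+w_{i,\alpha}(n'))/2}$ or twice it, both invertible modulo any odd prime. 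Hence determining $\#C_{i,\alpha}(\mathbb{F}_{2^{rn'}})$ modulo any single odd prime will suffice.

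To produce such a congruence I would iterate Proposition~\ref{prop-mod-p}, stripping off the odd prime divisors of $m$ and of $m'$ one at a time, yielding for every odd $p\mid m$ and every odd $p'\mid m'$
\[
\#C_{i,\alpha}(\mathbb{F}_{2^{rn}}) \equiv \#C_{i,\alpha}(\mathbb{F}_{2^{r\cdot 2^u}}) \pmod{p}, \qquad \#C_{i,\alpha}(\mathbb{F}_{2^{rn'}}) \equiv \#C_{i,\alpha}(\mathbb{F}_{2^{r\cdot 2^u}}) \pmod{p'}.
\]
If $m=1$ the first relation collapses to $\#C_{i,\alpha}(\mathbb{F}_{2^{r\cdot 2^u}}) = \#C_{i,\alpha}(\mathbb{F}_{2^{rn}})$ outright, and the second immediately delivers $\#C_{i,\alpha}(\mathbb{F}_{2^{rn'}})$ modulo $p'$. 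If $m$ and $m'$ share any odd prime $p=p'$, splicing the two chains yields the direct congruence $\#C_{i,\alpha}(\mathbb{F}_{2^{rn'}}) \equiv \#C_{i,\alpha}(\mathbb{F}_{2^{rn}}) \pmod{p}$. In the remaining subcase one first pins down $\#C_{i,\alpha}(\mathbb{F}_{2^{r\cdot 2^u}})$ exactly: its residue modulo $\mathrm{rad}(m)$ is obtained from the first chain by CRT, and this residue combined with the three-value principle at $n=2^u$ isolates the count, which is then transferred across to $n'$ via the second chain.

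The main obstacle is the coprime case $\gcd(m,m')=1$ with $m,m'>1$: invoking the three-value principle at level $2^u$ requires knowledge of $w_{i,\alpha}(2^u)$, which is not literally among the hypotheses. I expect this to be handled by a separate stability claim---namely, that $w_{i,\alpha}(n)$ depends only on the $2$-adic valuation of $n$---which is a statement about the quadratic form $\Tr_{rn}(\alpha q_i(x))$ and its radical that would naturally be established alongside in the subsequent section. With such stability in hand, $w_{i,\alpha}(2^u) = w_{i,\alpha}(n) = w_{i,\alpha}(n')$ and the coprime subcase reduces to the scheme just sketched.
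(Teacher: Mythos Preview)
Your core mechanism---restrict $\#C_{i,\alpha}(\F_{2^{rn'}})$ to three candidates via \eqref{equation-Lambda} and separate them by a single congruence modulo an odd prime coming from Proposition~\ref{prop-mod-p}---is exactly the paper's approach. The paper, however, proves only the case $m=1$, $m'=p$ prime (it opens with ``We can assume $m=1$ and $m'=p>2$''), treating this as the inductive step; the general case is meant to follow by iterating that step one prime at a time, which tacitly uses the radical dimensions at every intermediate level $2^u p_1\cdots p_j$, not just at $n'$. Your final paragraph correctly anticipates that this extra input is needed, and the paper supplies it concretely in the subsequent radical-dimension lemmas rather than as an abstract stability principle.

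Where your write-up goes wrong is the displayed pair of congruences. Proposition~\ref{prop-mod-p} with modulus $p'$ applies only when $p'$ divides the current exponent, taking you from level $N$ to level $N/p'$; iterating with the \emph{same} $p'$ therefore carries $n'=2^u m'$ only down to $2^u\cdot m'/p'^{\,v_{p'}(m')}$, and no further. To remove a different prime factor you must switch modulus, and the resulting congruence cannot be composed with the mod-$p'$ one. So the claim
\[
\#C_{i,\alpha}(\F_{2^{rn'}})\equiv\#C_{i,\alpha}(\F_{2^{r\cdot 2^u}})\pmod{p'}
\]
does not follow when $m'$ has more than one distinct prime divisor. (For a concrete failure of the underlying mechanism, take $y^2+y=x^3$ over $\F_2$, to which the Galois-orbit argument behind Proposition~\ref{prop-mod-p} applies verbatim: with $u=1$, $m'=15$, $p'=5$ one gets $\#C(\F_{2^{30}})=2^{30}+2^{16}+1\equiv 1\pmod 5$ but $\#C(\F_{2^{2}})=9\equiv 4\pmod 5$.) Consequently your ``$m=1$'' subcase is valid only for $m'$ a prime power, and your shared-prime subcase only when that shared prime is the sole odd prime in both $m$ and $m'$. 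The repair is precisely the paper's: restrict to the single-prime step $m=1$, $m'=p$ and iterate, feeding in the intermediate $w_{i,\alpha}$'s from outside.
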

	\begin{proof}
		We can assume $m=1$ and $m'=p>2$. Since  dimension of radical of $\Tr_{r2^up}(\alpha q_i(x))$ is known, It is enough to find sign of $C_{i,\alpha}(\mathbb F_{r2^up})$ by Equation \ref{equation-Lambda}.  Since $\#C_{i,\alpha}(\mathbb F_{r2^u})$ is known and  $\#C_{i,\alpha}(\mathbb F_{r2^up}) \equiv \#C_{i,\alpha}(\mathbb F_{r2^u})  \mod p$, the sign of $C_{i,\alpha}(\mathbb F_{r2^up})$ is also known. \qed
	\end{proof}


Note that Proposition \ref{minimal-prop} and Corollary \ref{cor-mod-p} imply that in order to find the number of rational points of $C_{i,\alpha}$ on 
$\mathbb F_{2^{rn}}$ for all $n \geq 1$, it is enough to find the number of rational points of $C_{i,\alpha}$ on $\mathbb F_{2^{r2^u}}$ where 
$0 \leq u \leq v$ and $v$ is the minimal integer for which $C(\mathbb F_{2^{r2^v}})$ is minimal, and also the dimension of the radical of 
$\Tr_{rn}(\alpha q_i(x))$ for all $n \geq 1$.

	\subsection{Number of $\mathbb F_{2^{rn}}$-rational points on the curve $C_1 : y^{2^r}+y= q_1(x)$}
In this section we will find the number of rational points on the curves $$C_1 : y^{2^r}+y= x^{2^r+1}+x^2$$ on $\mathbb F_{2^{rn}}$ for all $r,n \in \mathbb Z^+$. In order to find this we will first try to  find the number of rational points on the curves $$C_{1,\alpha} \: : \: y^2+y=\alpha (x^{2^r+1}+x^2)$$ on $\mathbb F_{2^{rn}}$ for all $\alpha \in \mathbb F_{2^r}^\times$, and we will then apply Corollary~\ref{pointcounting-thm}. We will see that the number of points is the same for each $\alpha$.

		\begin{lemma} \label{C1-dimension}
					 	Let $B_1(x,y)=\Tr_{rn}(q_1(x+y)+q_1(x)+q_1(y))$ and let the radical of $B_1(x,y)$ be $W_1(n)=\{x \in \mathbb F_{2^{rn}} \: | \: B_1(x,y)=0 \: \text{ for all } \: y \in \mathbb F_{2^{rn}} \}$. Denote the dimension of radical of $B_1(x,y)$ over $\mathbb F_{2^{rn}}$ as $w_r(n)$, then we have 
					 $$w_r(n)=
					 \begin{cases}
					 r & \text{if $n$ is odd}\\
					 2r & \text{if $n$ is even.}\\
					 \end{cases}
					 $$
		\end{lemma}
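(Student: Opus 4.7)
The plan is to compute the polarisation $B_1$ of $q_1$ explicitly, then use non-degeneracy of the trace pairing to rewrite $B_1$ as $\Tr_{rn}(L(x)\,y)$ for some $\F_2$-linearised polynomial $L$, so that $W_1(n)=\ker L$. Counting $\ker L$ will reduce to a standard subfield intersection.

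First I would expand $q_1(x+y)+q_1(x)+q_1(y)$. Since $(x+y)^2=x^2+y^2$ and $(x+y)^{2^r+1}=(x^{2^r}+y^{2^r})(x+y)$, the cross terms collapse to $x^{2^r}y+xy^{2^r}$, so
\[
B_1(x,y)=\Tr_{rn}(x^{2^r}y+xy^{2^r}).
\]
Using Frobenius invariance of the absolute trace ($\Tr_{rn}(z^2)=\Tr_{rn}(z)$), I raise the second term to the $2^{r(n-1)}$-th power inside the trace to obtain
\[
\Tr_{rn}(xy^{2^r})=\Tr_{rn}\!\bigl((xy^{2^r})^{2^{r(n-1)}}\bigr)=\Tr_{rn}(x^{2^{r(n-1)}}y),
\]
and hence $B_1(x,y)=\Tr_{rn}(L(x)\,y)$ with $L(x):=x^{2^r}+x^{2^{r(n-1)}}$.

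Because the trace pairing $(a,b)\mapsto\Tr_{rn}(ab)$ is non-degenerate on $\F_{2^{rn}}$, a given $x$ lies in $W_1(n)$ iff $L(x)=0$, i.e.\ $x^{2^r}=x^{2^{r(n-1)}}$. Since $x\mapsto x^{2^r}$ is a bijection on $\F_{2^{rn}}$, I can set $y=x^{2^r}$, so the condition becomes $y=y^{2^{r(n-2)}}$. The set of $y\in\F_{2^{rn}}$ fixed by $z\mapsto z^{2^{r(n-2)}}$ is $\F_{2^{rn}}\cap\F_{2^{r(n-2)}}=\F_{2^{r\gcd(n,n-2)}}$, and $\gcd(n,n-2)=\gcd(n,2)$ equals $1$ if $n$ is odd and $2$ if $n$ is even. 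Therefore $|W_1(n)|=2^r$ or $2^{2r}$, giving $w_r(n)=r$ or $2r$ as claimed.

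The computation is essentially mechanical; the only subtlety worth flagging is that in general $\F_{2^{r(n-2)}}$ is \emph{not} a subfield of $\F_{2^{rn}}$ (it is only when $(n-2)\mid n$), so the kernel count must be carried out as an intersection of subfields of $\overline{\F_2}$, producing $\F_{2^{r\gcd(n,n-2)}}$ rather than $\F_{2^{r(n-2)}}$ itself. I do not anticipate any substantive obstacle beyond this bookkeeping step.
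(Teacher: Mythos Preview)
Your proof is correct and follows essentially the same approach as the paper: expand the polarisation, use non-degeneracy of the trace pairing to reduce the radical to the kernel of a linearised polynomial, and identify that kernel as a subfield intersection. The only difference is cosmetic: the paper shifts the \emph{other} term inside the trace, obtaining $B_1(x,y)=\Tr_{rn}\bigl(y^{2^r}(x^{2^{2r}}+x)\bigr)$, so the relevant polynomial $x^{2^{2r}}+x$ is independent of $n$ and the radical is immediately $\F_{2^{rn}}\cap\F_{2^{2r}}=\F_{2^{r\gcd(n,2)}}$, which sidesteps the $\gcd(n,n-2)$ bookkeeping you (correctly) flagged.
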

		\begin{proof}
				Let $B_1(x,y)=\Tr_{rn}(q_1(x+y)+q_1(x)+q_1(y))$. Then we have $$B_1(x,y)=\Tr_{rn}(y^{2^r}(x^{2^{2r}}+x))$$ and the radical of $B_1(x,y)$ is $$W_1(n)=\{x \in \mathbb F_{2^{rn}} \: | \: B_1(x,y)=0 \: \text{ for all } \: y \in \mathbb F_{2^{rn}} \}= \mathbb F_{2^{rn}} \cap \mathbb F_{2^{2r}}=\mathbb F_{2^{r(2,n)}}.$$ \qed
\end{proof}
		\begin{lemma}\label{lemma-W2n-C1}
			Let the notation be above. $\Tr_{2r}(q_1(W_1(2)))=\mathbb F_{2}$ and $\Tr_{2r}(q_1(W_1(n)))=0$ for $4 \mid n$.
		\end{lemma}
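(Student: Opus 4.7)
My plan is to exploit the explicit form of the radical, $W_1(n)=\mathbb{F}_{2^{r(2,n)}}$, furnished by Lemma~\ref{C1-dimension}. Since $(2,n)=2$ in both cases at hand, we have $W_1(2)=W_1(n)=\mathbb{F}_{2^{2r}}$, so in all computations it suffices to restrict to $x\in\mathbb{F}_{2^{2r}}$ and analyse $q_1(x)=x^{2^r+1}+x^2$ over this subfield.

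The key observation is that for such $x$,
\[
(x^{2^r+1})^{2^r}=x^{2^{2r}+2^r}=x\cdot x^{2^r}=x^{2^r+1},
\]
so $x^{2^r+1}$ lies in $\mathbb{F}_{2^r}$ (indeed it is the norm of $x$ from $\mathbb{F}_{2^{2r}}$ to $\mathbb{F}_{2^r}$). Transitivity of trace then forces $\Tr_{2r}(x^{2^r+1})=\Tr_r(x^{2^r+1}+x^{2^r+1})=0$ in characteristic 2. Coupled with the identity $\Tr_{2r}(x^2)=\Tr_{2r}(x)^2=\Tr_{2r}(x)$ (valid because $\Tr_{2r}$ is $\mathbb{F}_2$-valued and Frobenius-invariant), this yields
\[
\Tr_{2r}(q_1(x))=\Tr_{2r}(x),
\]
which is surjective onto $\mathbb{F}_2$ as $x$ ranges over $\mathbb{F}_{2^{2r}}$. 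This settles the first assertion.

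For the second assertion (the case $4\mid n$, where the absolute trace relevant to the quadratic form is $\Tr_{rn}$), I would invoke the tower law once more: for any $z\in\mathbb{F}_{2^{2r}}\subseteq\mathbb{F}_{2^{rn}}$,
\[
\Tr_{rn}(z)=\Tr_{2r}\bigl(\Tr^{\mathbb{F}_{2^{rn}}/\mathbb{F}_{2^{2r}}}(z)\bigr)=\Tr_{2r}\bigl((n/2)\,z\bigr),
\]
since the relative trace of a subfield element is the extension degree times that element. When $4\mid n$ the factor $n/2$ is even, so the right-hand side vanishes; applying this with $z=q_1(x)\in\mathbb{F}_{2^{2r}}$ for each $x\in W_1(n)$ completes the argument.

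The only non-routine step is the norm observation $x^{2^r+1}\in\mathbb{F}_{2^r}$ for $x\in\mathbb{F}_{2^{2r}}$; once that is in hand, the remainder is just bookkeeping with transitivity of trace, so I do not foresee any genuine obstacle.
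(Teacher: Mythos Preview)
Your proof is correct and takes essentially the same route as the paper's: both hinge on the norm observation $x^{2^r+1}\in\mathbb{F}_{2^r}$ for $x\in\mathbb{F}_{2^{2r}}$, whence $\Tr_{2r}(x^{2^r+1})=0$ and $\Tr_{2r}(q_1(x))=\Tr_{2r}(x)$. Your tower-law argument for the $4\mid n$ case makes explicit what the paper's proof leaves as a one-line assertion, and you correctly read the second occurrence of $\Tr_{2r}$ in the statement as $\Tr_{rn}$ (which is how the paper's own proof and its subsequent application, e.g.\ invoking $\Tr_{4r}(q_1(W_1(4)))=0$, actually treat it).
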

		\begin{proof}
		Suppose $n=2$. Since $x^{2^r+1} \in \mathbb F_{2^r}$ for all $x \in \mathbb F_{2^{2r}}$ we have $\Tr_{2r}(x^{2^r+1})=0$ for all $x \in \mathbb F_{2^{2r}}$. Therefore $$\Tr_{2r}(x^{2^r+1}+x^2)=\Tr_{2r}(x).$$
		Now suppose $4|n$. Since $W_1(n)=\mathbb F_{2^{2r}}$, for all $x \in W_1(n)$ we have $$\Tr_{rn}(x^{2^r+1}+x^2)=0$$ for all $x \in W_1(n)$.\qed	
		\end{proof}
		\begin{theorem}\label{thm-points on curve 1}
			 If $r$ is odd, then we have $$ \#C_{1,\alpha}(\mathbb F_{2^{rn}})=
			 \begin{cases}
			 (2^{rn}+1)+2^{r(n+1)/2} &\text{if $n \equiv 1,7 \mod 8$,}\\
			 (2^{rn}+1)-2^{r(n+1)/2} &\text{if $n \equiv 3,5 \mod 8$,}\\
			 (2^{rn}+1) &\text{if $n \equiv 2,6 \mod 8$,}\\
			 (2^{rn}+1)+2^{r(n+2)/2} &\text{if $n \equiv 4 \mod 8$,}\\
			 (2^{rn}+1)-2^{r(n+2)/2} &\text{if $n \equiv 0 \mod 8$}
			 \end{cases}
			 $$
			 if $r$ is even, then we have $$ \#C_{1,\alpha}(\mathbb F_{2^{rn}})=
			 \begin{cases}
			 (2^{rn}+1)+2^{r(n+1)/2} &\text{if $n \equiv 1,3 \mod 4$,}\\
			 (2^{rn}+1) &\text{if $n \equiv 2 \mod 4$,}\\
			 (2^{rn}+1)-2^{r(n+2)/2} &\text{if $n \equiv 0\mod 4$.}\\
			 \end{cases}
			 $$
	\end{theorem}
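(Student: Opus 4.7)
The plan is to apply Proposition~\ref{counts} to the quadratic form $Q_\alpha(x)=\Tr_{rn}(\alpha q_1(x))$ on $\F_{2^{rn}}$, using $\#C_{1,\alpha}(\F_{2^{rn}})=1+2\,N(Q_\alpha)$, where $N(Q_\alpha)$ is the number of zeros of $Q_\alpha$ and the $+1$ accounts for the point at infinity. A preliminary remark is that in characteristic $2$ every element of $\F_{2^r}^\times$ has a unique square root, so $(x,y)\mapsto(x/\sqrt\alpha,y)$ is an $\F_{2^r}$-isomorphism $C_{1,1}\to C_{1,\alpha}$; hence the count is $\alpha$-independent, as the theorem claims.

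Lemma~\ref{C1-dimension} already gives $W=\F_{2^{r\gcd(2,n)}}$, so the remaining ingredient for Proposition~\ref{counts} is $\dim W_0$. For $n$ odd, $x^{2^r+1}=x^2$ on $\F_{2^r}$, forcing $q_1|_W\equiv 0$ and $\dim W_0=r$, giving rank $r(n-1)$ (even). For $n$ even, $\alpha x^{2^r+1}\in\F_{2^r}$ while $\Tr_{rn}$ vanishes on $\F_{2^r}$, so $Q_\alpha|_W=(n/2)\Tr_{2r}(\alpha x^2)$: identically zero when $4\mid n$ (so $\dim W_0=2r$, rank $r(n-2)$) and a nonzero linear form when $n\equiv 2\pmod 4$ (so $\dim W_0=2r-1$, rank $r(n-2)+1$, odd). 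Proposition~\ref{counts} then yields deviations $\pm 2^{r(n+1)/2}$ for $n$ odd, $0$ for $n\equiv 2\pmod 4$, and $\pm 2^{r(n+2)/2}$ for $4\mid n$, matching the theorem's magnitudes.

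The remaining task, and the main obstacle, is fixing the signs in the two even-rank regimes. My plan is to combine Propositions~\ref{minimal-prop} and~\ref{prop-mod-p} with Corollary~\ref{cor-mod-p}: for any odd prime $p\mid n$, the congruence $\#C_{1,\alpha}(\F_{2^{rn}})\equiv\#C_{1,\alpha}(\F_{2^{r(n/p)}})\pmod p$ uniquely determines $\Lambda_{1,\alpha}(n)\in\{-1,+1\}$, since $2$ is a unit mod $p$ and $-1\not\equiv+1\pmod p$. Iterating reduces every sign question to a base case $n=2^u$. The trivial case $n=1$ yields $q_1\equiv 0$ on $\F_{2^r}$, so $\#C_{1,\alpha}(\F_{2^r})=2^{r+1}+1$ and $\Lambda=+1$; reducing $n=3,5,7$ back to $n=1$ by $p=3,5,7$ respectively then handles all odd residue classes.

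For the $4\mid n$ classes, the deviation $2^{r(n+2)/2}$ matches the Hasse--Weil bound $2g\sqrt{q^n}=2^r\cdot 2^{rn/2}$ exactly (the genus is $2^{r-1}$), so $C_{1,\alpha}$ is maximal or minimal over $\F_{2^{4r}}$ and, by Proposition~\ref{minimal-prop}, minimal over every $\F_{2^{r\cdot 2^u}}$ with $u\geq 3$. Hence the whole $4\mid n$ pattern is controlled by the single value $\Lambda_{1,\alpha}(4)$, which the theorem claims is $+1$ for $r$ odd and $-1$ for $r$ even. The crux is therefore a single explicit count of $\#C_{1,\alpha}(\F_{2^{4r}})$, which I would attempt either via the partial Jacobian decomposition of Section~\ref{sec:2r} combined with known L-polynomials of supersingular elliptic factors, or by a direct degeneracy analysis of $Q_\alpha$ on the tower $\F_{2^r}\subset\F_{2^{2r}}\subset\F_{2^{4r}}$.
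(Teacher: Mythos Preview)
Your overall architecture is essentially the paper's: reduce via Corollary~\ref{cor-mod-p} to the base cases $n=2^u$, compute $n=1$ trivially, handle $n\equiv 2\pmod 4$ via the odd-rank case of Proposition~\ref{counts}, and observe that the Hasse--Weil bound is saturated at $n=4$, forcing maximality or minimality there and hence minimality for all $u\ge 3$ by Proposition~\ref{minimal-prop}. Your computation of $\dim W_0$ is a clean restatement of Lemma~\ref{lemma-W2n-C1}.

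The gap is exactly where you say it is: you do not determine $\Lambda_{1,\alpha}(4)$, and your two proposed fallbacks are not convincing. The decomposition of Section~\ref{sec:2r} (Theorem~\ref{JacobianDecomposition}) breaks $J_{C_1}$ into the $J_{C_{1,\alpha}}$'s, but tells you nothing about decomposing an individual $J_{C_{1,\alpha}}$ into elliptic factors; you would need a separate fibre-product analysis. And a ``degeneracy analysis on the tower'' amounts to computing the Arf invariant of $Q_\alpha$ over $\F_{2^{4r}}$, which is not recoverable from the radical data you have assembled.

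The paper's device, which you are missing, is Lemma~\ref{lemma-unity}. Write the Frobenius eigenvalues of $C_{1,\alpha}$ as $\eta_i=\sqrt{2^r}\,\zeta_i$ with $\zeta_i$ roots of unity. Your own $n=1$ count gives $\sum_i\zeta_i=-\sqrt{2^r}$. Now maximality at $n=4$ forces every $\zeta_i^4=-1$, i.e.\ each $\zeta_i$ is a primitive $8$th root of unity, whereas minimality forces $\zeta_i^4=1$, i.e.\ each $\zeta_i\in\{\pm1,\pm i\}$. For $r$ even, $\sqrt{2^r}\in\Q$, but $1$ is not a $\Q$-linear combination of primitive $8$th roots of unity (Lemma~\ref{lemma-unity}), so maximality is impossible and $\Lambda_{1,\alpha}(4)=-1$. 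For $r$ odd, $\sqrt{2^r}$ is a rational multiple of $\sqrt 2$, which is not a $\Q$-linear combination of $\{\pm1,\pm i\}$, so minimality is impossible and $\Lambda_{1,\alpha}(4)=+1$. This single algebraic-number-theoretic observation, feeding the $n=1$ information back into the maximal/minimal dichotomy at $n=4$, is the missing idea.
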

		
	\begin{proof}
		In order to prove this theorem, we will find the number of rational points of $C_{1,1}$ on $\mathbb F_{2^{r}}$, $\mathbb F_{2^{2r}}$, $\mathbb F_{2^{4r}}$ and $\mathbb F_{2^{8r}}$. Then we will apply Proposition \ref{minimal-prop} and Proposition \ref{prop-mod-p} for finding the number of rational points of $C_{1,1}$ on $\mathbb F_{2^{rn}}$ for any $n \geq 1$. \\
		
		Since $x^{2^r+1}+x^2=0$ for all $x \in \mathbb F_{2^r}$, the number of rational points of $C_{1,1}$ on $\mathbb F_{2^{r}}$ is $(2^r+1)+2^r$. Therefore, the number of rational points of $C_{1,1}$ on $\mathbb F_{2^{rn}}$ is $(2^{rn}+1)+2^{r(n+1)/2}$ if $n \equiv 1,7 \mod 8$ and the number of rational points of $C_{1,1}$ on $\mathbb F_{2^{r}}$ is $(2^{rn}+1)+2^{r(n+1)/2}$ if $n \equiv 3,5 \mod 8$ by Proposition \ref{prop-mod-p} and Lemma \ref{C1-dimension}. \\
		
		Since $\Tr_{2r}(q_1(W_1(2)))=\mathbb F_2$ by Lemma \ref{lemma-W2n-C1}, we have  the number of rational points of $C_{1,1}$ on $\mathbb F_{2^{2r}}$ is $2^{2r}+1$.  Therefore, the number of rational points of $C_{1,1}$ on $\mathbb F_{2^{rn}}$ is $2^{rn}+1$ if $n \equiv 2,6 \mod 8$ by Proposition \ref{prop-mod-p} and Lemma \ref{C1-dimension}. \\
		
		Since $\Tr_{4r}(q_1(W_1(4)))= 0$ by Lemma \ref{lemma-W2n-C1}, we have  the number of rational points of $C_{1,1}$ on $\mathbb F_{2^{2r}}$ is $(2^{2r}+1) \pm 2^{r(n+2)/2}$. Since genus of $C_{1,1}$ is $2^{r-1}$ we have $C_1(\mathbb F_{2^{4r}})$ is maximal or minimal by Proposition \ref{minimal-prop}.  Since we cannot write $1$ as a $\mathbb Q$ linear combination of $\{w \in \mathbb C \: | \: w^4=-1\}$ by Lemma \ref{lemma-unity}, and since we cannot write $\sqrt 2$ as a $\mathbb Q$ linear combination of $\{w \in \mathbb C \: | \: w^4=1\}$, we have $C_{1,1}(\mathbb F_{2^{4r}})$ is maximal and $C_{1,1}(\mathbb F_{2^{8r}})$ is minimal by Proposition \ref{minimal-prop} if $r$ is odd, and $C_{1,1}(\mathbb F_{2^{4r}})$ is minimal if $r$ is even. Therefore if $r$ is odd, the number of rational points of $C_{1,1}$ on $\mathbb F_{2^{rn}}$ is $(2^{rn}+1)+2^{r(n+2)/2}$ if $n \equiv 4 \mod 8$ and $(2^{rn}+1)-2^{r(n+2)/2}$ if $n \equiv 0 \mod 8$; if $r$ is even, the number of rational points of $C_{1,1}$ on $\mathbb F_{2^{rn}}$ is $(2^{rn}+1)-2^{r(n+2)/2}$ if $n \equiv 0,4 \mod 8$  by Proposition \ref{prop-mod-p} and Lemma \ref{C1-dimension}.\\

			Note that since the map $(x,y) \to (\alpha^{-1/2}x,y)$ gives an isomorphism between $C_{1,\alpha} \to C_{1,1}$ where $\alpha \in \mathbb F_{2^r}^\times$, their number of rational points equal in any extension of $\mathbb F_{2^r}$. \qed
	\end{proof}

Using Corollary~\ref{pointcounting-thm} and Theorem~\ref{thm-points on curve 1}, Table~\ref{table:C1points} in the appendix presents the number of 
$\F_{2^{rn}}$-rational points on $C_1:y^{2^r}+y=q_1(x)$.

\subsection{Number of $\mathbb F_{2^{rn}}$-rational points on the curve  $C_2 : y^{2^r}+y= q_2(x)$}
In this section we will find the number of $\mathbb F_{2^{rn}}$-rational points on the curve $$C_2:y^{2^r}+y= x^{2^{r+1}+1}+x^{2^r+2}$$ for all $r,n \in \mathbb Z^+$. In order to find this we will first try to  find the number of rational points on the curves $$C_{2,\alpha} \: : \: y^2+y=\alpha (x^{2^{r+1}+1}+x^{2^r+2})$$ on $\mathbb F_{2^{rn}}$ for all $\alpha \in \mathbb F_{2^r}^\times$,
	and we will then apply Corollary~\ref{pointcounting-thm}.
	We will see that the number of points depends on whether $\alpha$ is a cube, for odd $r$.	
	
	\begin{lemma} \label{lemma-C2-dimensions}
		Let $B_{2,\alpha}(x,y)=\Tr_{rn}(\alpha(q_2(x+y)+q_2(x)+q_2(y)))$ where $\alpha \in \mathbb F_{2^r}$ and let the radical of $B_{2,\alpha}(x,y)$ be $W_1(n)=\{x \in \mathbb F_{2^{rn}} \: | \: B_{2,\alpha}(x,y)=0 \: \text{ for all } \: y \in \mathbb F_{2^{rn}} \}$. Denote the dimension of radical of $B_1(x,y)$ over $\mathbb F_{2^{rn}}$ as $w_r(n)$. If $\alpha \in \{x^3 \: | \: x \in \mathbb F_{2^r} \}$, then we have $$w_{r,\alpha}(n)=
		\begin{cases}
		r & \text{if $n$ is odd},\\
		2r & \text{if $n \equiv 2 \mod 4$,}\\
		2r+2 & \text{if $n \equiv 0 \mod 4$.}\\
		\end{cases}
		$$ If $\alpha \not \in \{x^3 \: | \: x \in \mathbb F_{2^r} \}$, then we have $$w_{r,\alpha}(n)=
		\begin{cases}
		r & \text{if $(n,6)=1$},\\
		2r & \text{if $(n,6)=2$},\\
		r+2 & \text{if $(n,6)=3$},\\
		2r+2 & \text{if $(n,6)=6$}.\\
		\end{cases}
		$$
	\end{lemma}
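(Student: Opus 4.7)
The plan is to compute the polarization $B_{2,\alpha}$ explicitly, reduce the radical to solutions of a single factorable equation, and then determine its size by a case analysis based on the cubic residue behavior of $\alpha$.

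First, expanding $(x+y)^{2^{r+1}+1}$ and $(x+y)^{2^r+2}$ via $(x+y)^{2^k}=x^{2^k}+y^{2^k}$ gives
\begin{equation*}
q_2(x+y)+q_2(x)+q_2(y) = x^{2^{r+1}}y + x\,y^{2^{r+1}} + x^{2^r}y^2 + x^2 y^{2^r}.
\end{equation*}
Hence $B_{2,\alpha}(x,y)=\Tr_{rn}(\alpha L(x,y))$ for this $L$. Using $\Tr_{rn}(z^{2^k})=\Tr_{rn}(z)$ on each term and $\alpha^{2^r}=\alpha$, one rewrites $B_{2,\alpha}(x,y)$ as $\Tr_{rn}(P(x)\cdot y)$ for an explicit $\mathbb{F}_2$-linearised polynomial $P$. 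Then $x$ lies in the radical iff $P(x)=0$. After applying a single power of Frobenius to $P(x)=0$ and dividing by $\alpha$, this equation simplifies to
\begin{equation*}
\alpha(x^{2^{2r+2}}+x^4) + (x^{2^{2r}}+x) = 0.
\end{equation*}

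Substituting $u:=x^{2^{2r}}+x$ (so $u^4=x^{2^{2r+2}}+x^4$), the equation factors as $u(\alpha u^3+1)=0$. Therefore
\begin{equation*}
W_{r,\alpha}(n) = \{x\in\mathbb{F}_{2^{rn}} : x^{2^{2r}}+x=0\} \,\cup\, \{x : (x^{2^{2r}}+x)^3=\alpha^{-1}\}.
\end{equation*}
The map $\phi:x\mapsto x^{2^{2r}}+x$ on $\mathbb{F}_{2^{rn}}$ has kernel $\mathbb{F}_{2^{rd}}$ with $d:=\gcd(2,n)$ and image $\ker\Tr_{\mathbb{F}_{2^{rn}}/\mathbb{F}_{2^{rd}}}$. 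Letting $k$ denote the number of cube roots of $\alpha^{-1}$ in $\mathbb{F}_{2^{rn}}$ which additionally lie in the image of $\phi$, I obtain $|W_{r,\alpha}(n)|=(k+1)\,2^{rd}$, so
\begin{equation*}
w_{r,\alpha}(n) = rd + \log_2(k+1).
\end{equation*}

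What remains is to determine $k$ case by case; I will show $k\in\{0,3\}$ always, matching the dimension increments $0$ or $2$ in the lemma. When $\alpha$ is a cube in $\mathbb{F}_{2^r}$, all cube roots of $\alpha^{-1}$ lie in $\mathbb{F}_{2^{2r}}$, and for $u_0\in\mathbb{F}_{2^{rd}}$ the trace $\Tr_{rn/rd}(u_0)$ equals $(n/d)u_0$ in characteristic $2$; a short computation (handling separately the $r$-odd subcase where only $\beta\in\mathbb{F}_{2^r}$ lies in $\mathbb{F}_{2^{rn}}$ when $n$ is odd) gives $k=3$ precisely when $4\mid n$. When $\alpha$ is not a cube in $\mathbb{F}_{2^r}$ (forcing $r$ even, since $\gcd(3,2^r-1)=1$ for $r$ odd), the three cube roots form a single Galois orbit in $\mathbb{F}_{2^{3r}}$, contained in $\mathbb{F}_{2^{rn}}$ iff $3\mid n$. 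Writing the orbit as $\beta,\beta\zeta,\beta\zeta^2$ with $\zeta\in\mathbb{F}_{2^r}$ a primitive cube root of unity, and noting $\beta^{2^r}\in\{\beta\zeta,\beta\zeta^2\}$, the identity $1+\zeta+\zeta^2=0$ gives $\Tr_{3r/r}(\beta)=0$; periodicity of Frobenius on the orbit then yields $\Tr_{rn/rd}(u_0)=0$ for each cube root $u_0$, so $k=3$ whenever $3\mid n$.

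The main obstacle is organizing the case split cleanly: the parity of $r$, the parity of $n$, divisibility of $n$ by $3$, and the cubic residue status of $\alpha$ all interact, but each of the resulting six cases reduces to a straightforward trace computation once the factorization $u(\alpha u^3+1)=0$ is in hand.
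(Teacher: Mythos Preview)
Your proof is correct and follows essentially the same route as the paper's. Both arguments compute the polarization explicitly, collect terms so that the radical condition becomes a single $\F_2$-linearised equation in $x$, and then substitute $u=x^{2^{2r}}+x$ to factor it as $u$ times a cubic in $u$; the case split then hinges on where the roots of that cubic (i.e.\ the cube roots of $\alpha^{-1}$, or after the paper's change of variable $\alpha x\to x$, the roots of $x^3+\alpha^2$) lie. The one place where you are more careful than the paper is in checking not only that the cube roots land in $\F_{2^{rn}}$ but that they lie in the \emph{image} of $\phi:x\mapsto x^{2^{2r}}+x$, via the trace condition $\Tr_{rn/rd}(u_0)=0$; your use of $1+\zeta+\zeta^2=0$ to show this in the non-cube case is exactly what is needed, and the paper's corresponding formula $w_{r,\alpha}(n)=\tfrac12\#\{u\in\{0,a,b,c\}:u\in\F_{2^{rn}}\}+r(n,2)$ glosses over this point.
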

	\begin{proof}
			Let $B_{2,\alpha}(x,y)=\Tr_{rn}(\alpha(q_2(x+y)+q_2(x)+q_2(y)))$ where $\alpha \in \mathbb F_{2^r}$. Then we have $$B_{2,\alpha}(x,y)=\Tr_{rn}(\alpha y^{2^{r+1}}[(x+\alpha x^4)+(x+\alpha x^4)^{2^{2r}}])$$ and the radical of  $B_{2,\alpha}(x,y)$ is $$W_{2,\alpha}(n)=\{ x \in \mathbb F_{2^{rn}} \: | \: (x+\alpha x^4)+(x+\alpha x^4)^{2^{2r}}=0 \}.$$ This can be rewritten as (multiplying by $\alpha^3$ and transforming $\alpha x \to x$) $$W_{2,\alpha}(n)=\{ x \in \mathbb F_{2^{rn}} \: | \: (x^4+\alpha^2 x)+(x^4+\alpha^2 x)^{2^{2r}}=0 \in \mathbb F_{2^{2r}} \}.$$ 
			Let $x^4+\alpha^2 x=x(x+a)(x+b)(x+c)$ where $a,b,c \in \overline{\mathbb F_{2^r}}$. Then $$(x^4+\alpha^2 x)+(x^4+\alpha^2 x)^{2^{2r}}=(x^{2^{2r}}+x)(x^{2^{2r}}+x+a)(x^{2^{2r}}+x+b)(x^{2^{2r}}+x+c).$$ Therefore $w_{r,\alpha}(n)=\frac12\#\{u \: : \: u \in \{0,a,b,c\} \text{ and } u \in \mathbb F_{2^{rn}} \}+r(n,2).$ Moreover if $\alpha^2$ (or $\alpha$) is not a cube in $\mathbb F_{2^r}$ then $x^3+\alpha^2$ is irreducible  over $\mathbb F_{2^r}$ and hence all roots of $x^3+\alpha^2$ are in $\mathbb F_{2^{3r}}$,  if $\alpha^2$ (or $\alpha$) is a cube in $\mathbb F_{2^r}$ then $x^{4r}+x=0$ for all $x \in W_{2,\alpha}(n)$  and hence all roots of $x^3+\alpha^2$ are in $\mathbb F_{2^{4r}}$. \qed
	\end{proof}
\begin{lemma} \label{C2-W2}
	Let notation be as above. Then we have $\Tr_{2r}(\alpha q_2(W_{2,\alpha}(n)))=0$ and therefore $\#C_{2,\alpha}(\mathbb F_{2^{2r}})-(2^{2r}+1) \ne 0$.
\end{lemma}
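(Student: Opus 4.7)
The plan is to specialize to $n = 2$ (the case the stated conclusion actually needs) and use a direct trace computation. \emph{First,} I would identify the radical: for $n = 2$ the defining condition $(x + \alpha x^4) + (x + \alpha x^4)^{2^{2r}} = 0$ (read off from the proof of Lemma~\ref{lemma-C2-dimensions}) is automatic for every $x \in \F_{2^{2r}}$, since $\alpha \in \F_{2^r} \subseteq \F_{2^{2r}}$ and $\F_{2^{2r}}$ is fixed by $x \mapsto x^{2^{2r}}$. Hence $W_{2,\alpha}(2) = \F_{2^{2r}}$, matching $w_{r,\alpha}(2) = 2r$.

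\emph{Second,} I would evaluate $\Tr_{2r}(\alpha q_2(x))$ on $\F_{2^{2r}}$ using the tower formula $\Tr_{2r}(\beta) = \Tr_r(\beta + \beta^{2^r})$ together with $x^{2^{2r}} = x$. Setting $u = x$ and $v = x^{2^r}$ (so $v^{2^r} = u$), the two monomials $\alpha x^{2^{r+1}+1} = \alpha u v^2$ and $\alpha x^{2^r+2} = \alpha u^2 v$ are swapped by the inner Frobenius $\beta \mapsto \beta^{2^r}$, and each contributes
\[
\Tr_r(\alpha u v^2 + \alpha u^2 v) = \Tr_r(\alpha u v (u+v))
\]
to the trace. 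Summing gives $2\,\Tr_r(\alpha u v (u+v)) = 0$ in characteristic $2$, so $\Tr_{2r}(\alpha q_2(x)) \equiv 0$ on $\F_{2^{2r}}$, which is the first assertion.

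\emph{Third,} I would deduce the point count: for each of the $2^{2r}$ choices of $x \in \F_{2^{2r}}$, the Artin--Schreier equation $y^2 + y = \alpha q_2(x)$ has two $\F_{2^{2r}}$-rational solutions $y$. Including the point at infinity gives $\#C_{2,\alpha}(\F_{2^{2r}}) = 2 \cdot 2^{2r} + 1 = 2^{2r+1} + 1$, so
\[
\#C_{2,\alpha}(\F_{2^{2r}}) - (2^{2r}+1) = 2^{2r} \ne 0,
\]
as required.

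The only substantive step is the trace identity in the second paragraph; its main difficulty is the bookkeeping with Frobenius exponents. The key observation that makes the proof collapse is the symmetry between $u v^2$ and $u^2 v$: the tower formula pushes both monomials to the same $\Tr_r$-value, and characteristic $2$ takes care of the rest.
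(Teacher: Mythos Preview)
Your proof is correct and follows essentially the same route as the paper. The paper's argument is a one-liner: for $x \in \F_{2^{2r}}$ the $2^r$-th power Frobenius swaps the two monomials of $q_2(x)$, so $\alpha q_2(x) \in \F_{2^r}$ and hence $\Tr_{2r}(\alpha q_2(x)) = \Tr_r(2\alpha q_2(x)) = 0$; your tower-formula computation with $u = x$, $v = x^{2^r}$ unpacks exactly this swap monomial by monomial, and your explicit count $\#C_{2,\alpha}(\F_{2^{2r}}) = 2^{2r+1}+1$ makes the ``$\ne 0$'' conclusion (which the paper leaves implicit via Proposition~\ref{counts}) concrete.
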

\begin{proof}
		Since $\alpha(x^{2^{r+1}+1}+x^{2^{r+2}}) \in \mathbb F_{2^r}$ for all $x \in \mathbb F_{2^{2r}}$ and for all $\alpha \in \mathbb F_{2^r}$, we have $$\Tr_{2r}(\alpha(x^{2^{r+1}+1}+x^{2^{r+2}}))=0$$ for all $x \in \mathbb F_{2^{2r}}$ and for all $\alpha \in \mathbb F_{2^r}$. Therefore $\#C_{2,\alpha}(\mathbb F_{2^{2r}})-(2^{2r}+1) \ne 0$. \qed
\end{proof}

\begin{theorem}\label{thm-points on curve 2}
If $r$ is odd, then we have
$$ \#C_{2,\alpha}(\mathbb F_{2^{rn}})=
\begin{cases}
(2^{rn}+1)+2^{r(n+1)/2} &\text{if $n \equiv 1,7 \mod 8$},\\
(2^{rn}+1)-2^{r(n+1)/2} &\text{if $n \equiv 3,5 \mod 8$},\\
(2^{rn}+1)+2^{r(n+2)/2} &\text{if $n \equiv 2,6 \mod 8$},\\
(2^{rn}+1) &\text{if $n \equiv 4 \mod 8$}\\
(2^{rn}+1)-2^{r(n+2)/2+1} &\text{if $n \equiv 0 \mod 8$}.\\
\end{cases}
$$
 If $r$ is even and $\alpha \in \{x^3 \: | \: x\in \mathbb F_{2^r} \}$, , then we have
$$ \#C_{2,\alpha}(\mathbb F_{2^{rn}})=
\begin{cases}
(2^{rn}+1)+2^{r(n+1)/2} &\text{if $n \equiv 1,3 \mod 4$},\\
(2^{rn}+1)+2^{r(n+2)/2} &\text{if $n \equiv 2 \mod 4$},\\
(2^{rn}+1)-2^{r(n+2)/2+1} &\text{if $n \equiv 0 \mod 4$}.\\
\end{cases}
$$
If $r$ is even and $\alpha \not \in \{x^3 \: | \: x\in \mathbb F_{2^r} \}$, then we have $$ \#C_{2,\alpha}(\mathbb F_{2^{rn}})=
\begin{cases}
(2^{rn}+1)+2^{r(n+1)/2} & \text{if $(n,6)=1$},\\
(2^{rn}+1)+2^{r(n+2)/2} & \text{if $(n,6)=2$},\\
(2^{rn}+1)-2^{r(n+1)/2+1} & \text{if $(n,3)=3$},\\
(2^{rn}+1)-2^{r(n+2)/2+1} & \text{if $(n,6)=6$}.\\
\end{cases}
$$
\end{theorem}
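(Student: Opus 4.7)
The proof follows the template of Theorem~\ref{thm-points on curve 1}: for each of the three subcases ($r$ odd, $r$ even with $\alpha$ a cube, $r$ even with $\alpha$ not a cube), I compute $\#C_{2,\alpha}(\F_{2^{ra}})$ at a short list of small base values of $a$, fix the sign at the first level where the Hasse--Weil bound is met via Lemma~\ref{lemma-unity} or a congruence from Proposition~\ref{prop-mod-p}, and then propagate to all $n$ by Proposition~\ref{minimal-prop} (for $2$-power multiples) and Corollary~\ref{cor-mod-p} (for odd multiples). Throughout, the dimension data of Lemma~\ref{lemma-C2-dimensions} combined with Proposition~\ref{counts} reduce $\#C_{2,\alpha}(\F_{2^{rn}})$ at each level to a single sign $\Lambda_{2,\alpha}(n)\in\{-1,0,1\}$, which is then all that remains.

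For the base cases, $n=1$ is immediate: $\alpha q_2(x)=\alpha(x^{2q+1}+x^{q+2})=2\alpha x^3=0$ for $x\in\F_{2^r}$, giving $\#C_{2,\alpha}(\F_{2^r})=(2^r+1)+2^r$. For $n=2$, Lemma~\ref{C2-W2} yields $\#C_{2,\alpha}(\F_{2^{2r}})=(2^{2r}+1)+2^{2r}$. In the $r$-even non-cube subcase one additionally needs $n=3$, which I handle by Proposition~\ref{prop-mod-p} with $p=3$: since $w=r+2$, the two candidate counts are $(2^{3r}+1)\pm 2^{2r+1}$, and reducing modulo $3$ against $\#C_{2,\alpha}(\F_{2^r})\equiv 0\pmod 3$ selects the minus sign.

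The central step is to identify the first extension at which the Hasse--Weil bound is attained and to fix its sign. Since $g=2^r$, so $2g\sqrt{2^{rn}}=2^{rn/2+r+1}$, comparing with Lemma~\ref{lemma-C2-dimensions} shows that the bound is met precisely when $w=2r+2$: this happens first at $n=4$ for cubes and at $n=6$ for non-cubes. The $r$-odd subcase at $n=4$ is the main technical subtlety: although $w=2r+2$, the count is actually the `trivial' $2^{4r}+1$, forcing one up to $n=8$ to reach the Hasse--Weil level. This must be shown by proving that the restriction $Q|_W$ of $\Tr_{4r}(\alpha q_2)$ to its radical is onto (odd rank) when $r$ is odd, but identically zero (even rank) when $r$ is even, via an explicit evaluation of $\Tr_{4r}(\alpha q_2)$ on the description $W_{2,\alpha}(4)=\{x\in\F_{2^{4r}}:x^4+\alpha^2 x\in\F_{2^{2r}}\}$ taken from the proof of Lemma~\ref{lemma-C2-dimensions}, keyed to the parity of $r$.

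The sign at the extremal level is then pinned down exactly as in Theorem~\ref{thm-points on curve 1}. By supersingularity each Frobenius eigenvalue is $\eta_i=\sqrt{2^r}\zeta_i$ with $\zeta_i$ a root of unity, and the $n=1$ count yields $\sum_i\zeta_i=-\sqrt{2^r}$. In the $r$-odd subcase, maximality at $n=8$ would force each $\zeta_i$ to be a primitive $16$-th root of unity, and then $-\sqrt{2^r}=-2^{(r-1)/2}\sqrt 2$ would be a $\mathbb Q$-linear combination of primitive $16$-th roots, contradicting Lemma~\ref{lemma-unity}; hence $C_{2,\alpha}(\F_{2^{8r}})$ is minimal. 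The $r$-even cube subcase at $n=4$ is analogous, using that the integer $-2^{r/2}$ cannot be a $\mathbb Q$-combination of primitive $8$-th roots. The $r$-even non-cube subcase at $n=6$ does not yield to Lemma~\ref{lemma-unity} on its own; instead Proposition~\ref{prop-mod-p} with $p=3$ forces $\#C_{2,\alpha}(\F_{2^{6r}})\equiv\#C_{2,\alpha}(\F_{2^{2r}})\equiv 0\pmod 3$, selecting the minus sign among $(2^{6r}+1)\pm 2^{4r+1}$. Once the extremal level and sign are fixed, Proposition~\ref{minimal-prop} delivers minimality at every subsequent $2$-power multiple and Corollary~\ref{cor-mod-p} lifts the base counts to every $n=2^u m$ with $m$ odd through the dimension entries of Lemma~\ref{lemma-C2-dimensions}, producing all rows of the three subcases.
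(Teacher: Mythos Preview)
Your overall architecture matches the paper's: compute the count at a few small $n$, locate the first level at which the curve is maximal or minimal, fix the sign there, and propagate via Proposition~\ref{minimal-prop} and Corollary~\ref{cor-mod-p}. The tactical choices differ at two places, however.

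\medskip

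\textbf{The $r$ odd subcase at $n=4$.} You propose to show $\Lambda_{2,\alpha}(4)=0$ by an explicit evaluation of $\Tr_{4r}(\alpha q_2)$ on the radical, analogous to Lemma~\ref{onto-radical}. That is a legitimate route but you do not actually carry it out, and it is heavier than what the paper does. The paper argues purely with eigenvalues: if $C_{2,\alpha}(\F_{2^{4r}})$ were maximal then every $\eta_i^2$ would be purely imaginary, forcing $\sum\eta_i^2=0$ and hence $\#C_{2,\alpha}(\F_{2^{2r}})=2^{2r}+1$, contradicting Lemma~\ref{C2-W2}; if it were minimal then every $\eta_i=\sqrt{2^r}\zeta_i$ with $\zeta_i^4=1$, and the $n=1$ count would force $\sum\zeta_i=-\sqrt{2^r}$, impossible for $r$ odd since $\sqrt 2$ is not a $\Q$-combination of fourth roots of unity. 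Hence $\Lambda_{2,\alpha}(4)=0$ with no radical computation at all. At $n=8$ the paper then observes that $W_{2,\alpha}(8)\subset\F_{2^{4r}}$ makes $\Tr_{8r}(\alpha q_2)|_W=2\Tr_{4r}(\cdot)=0$, so $\Lambda(8)\ne 0$, and Lemma~\ref{lemma-unity} rules out maximality exactly as you say.

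\medskip

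\textbf{The $r$ even subcases.} Here the paper does \emph{not} use Lemma~\ref{lemma-unity} or split into cube/non-cube for the sign determination. Instead it observes the symmetry
\[
q_2(\omega x)=q_2(\omega^2 x)=q_2(x)\qquad (\omega^2+\omega+1=0),
\]
valid precisely when $r$ is even (since then $3\mid 2^{r+1}+1$ and $3\mid 2^r+2$), which gives $\#C_{2,\alpha}(\F_{2^{rn}})\equiv 0\pmod 3$ for every $n$ and every $\alpha$. Combined with the dimensions from Lemma~\ref{lemma-C2-dimensions}, this single congruence uniquely selects $\Lambda_{2,\alpha}(n)\in\{-1,0,1\}$ at each $n$, handling cube and non-cube $\alpha$ uniformly. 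Your route via Lemma~\ref{lemma-unity} (cube case) and Proposition~\ref{prop-mod-p} (non-cube case) reaches the same endpoints but is more piecemeal; in particular, your cube case at $n=4$ still relies on the uncompleted $Q|_W$ evaluation to exclude $\Lambda(4)=0$, whereas the paper's mod-$3$ trick disposes of that possibility immediately. (A minor slip: at $n=3$ in the non-cube case you speak of ``two candidate counts'', but there are three; the mod-$3$ congruence still singles out $\Lambda=-1$.)
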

\begin{proof} 
		We will prove this theorem in same manner of Theorem \ref{thm-points on curve 1}. Note that the genus of $C_{2,\alpha}$ is $2^{r}$ for all $\alpha \in \mathbb F_{2^r}^\times$.\\
		
		Assume $r$ is odd. Suppose $C_{2,\alpha}(\mathbb F_{2^{4r}})$ is maximal. Then it requires $\#C_{2,\alpha}(\mathbb F_{2^{2r}})-(2^{2r}+1) = 0$, but it is not possible by Lemma \ref{C2-W2}. Suppose $C_{2,\alpha}(\mathbb F_{2^{4r}})$ is minimal curve. Then we cannot write $\sqrt 2$ as a $\mathbb Q$ linear combination of $\{\eta \in \mathbb C \: | \: \eta^4=1\}$. Therefore $\#C_{2,\alpha}(\mathbb F_{2^{4r}})-(2^{4r}+1) = 0$. Furthermore, since $W_{1,\alpha}(n) \subset \mathbb F_{2^{4r}}$, we have $C_{2,\alpha}(\mathbb F_{2^{8r}})$ is maximal or minimal. Since  $\sqrt 2$ cannot be written as a $\mathbb Q$ linear combination of $\{\eta \in \mathbb C \: | \: \eta^8=-1\}$ by Lemma \ref{lemma-unity}, the curve $C_{2,\alpha}(\mathbb F_{2^{8r}})$ must be minimal.\\

		Assume $r$ is even. Then $\alpha q_2(wx)=\alpha q_2(w^2x)=\alpha q^2(x)$ for all $x \in \mathbb F_{2^{rn}}$ and for all $\alpha \in \mathbb F_{2^r}^\times$ where $w^2+w+1=0$ and therefore $\#C_{2,\alpha}(\mathbb F_{2^{rn}}) \equiv 0 \mod 3$. Since we know the dimensions $w_{r,\alpha}(n)$ for all $\alpha \in \mathbb F_{2^r}^\times$ and $n \geq 1$, one can easily find the signs of $C_{2,\alpha}(\mathbb F_{2^{rn}})$  for all $\alpha \in \mathbb F_{2^r}^\times$ and $n \geq 1$. \qed
\end{proof}
		
Using Corollary~\ref{pointcounting-thm} and Theorem~\ref{thm-points on curve 2}, Table~\ref{table:C2points} in the appendix presents the number of $\F_{2^{rn}}$-rational points on $C_2:y^{2^r}+y=q_2(x)$.

We remark that Moisio and Ranto~\cite{moisioranto} have counted $F_q(n,0,*,t_3)$ for $t_3 \in \F_{q} = \F_{2^r}$.
Their methods are different, and do not yield the same type of general formula as here.
	
\subsection{Number of $\mathbb F_{2^{rn}}$-rational points on the curve $C_3 : y^{2^r}+y= q_3(x)$}
In this section we will find the number of rational points on the curve $$C_3 : y^{2^r}+y= x^{2^{r+1}+1}+x^{2^r+2}+x^{2^r+1}+x^2$$ on $\mathbb F_{2^{rn}}$ for all $r,n \in \mathbb Z^+$. In order to find this we will first try to  find the number of rational points on the curves $$C_{3,\alpha} \: : \: y^2+y=\alpha (x^{2^{r+1}+1}+x^{2^r+2}+x^{2^r+1}+x^2)$$ on $\mathbb F_{2^{rn}}$ for all $\alpha \in \mathbb F_{2^r}^\times$.
	\begin{lemma}
		Let $B_{3,\alpha}(x,y)=\Tr_{rn}(\alpha(q_3(x+y)+q_3(x)+q_3(y)))$ where $\alpha \in \mathbb F_{2^r}$ and let the radical of $B_{3,\alpha}(x,y)$ be $W_{3,\alpha}(n)=\{x \in \mathbb F_{2^{rn}} \: | \: B_{3,\alpha}(x,y)=0 \: \text{ for all } \: y \in \mathbb F_{2^{rn}} \}$. Denote the dimension of radical of $B_{3,\alpha}(x,y)$ over $\mathbb F_{2^{rn}}$ as $w_r(n)$. If  $x^3+x+\alpha^{-1}$ has one root or three roots in $\mathbb F_{2^r}$, then we have $$w_{r,\alpha}(n)=
		\begin{cases}
		r & \text{if $n$ is odd},\\
		2r & \text{if $n \equiv 2 \mod 4$},\\
		2r+2 & \text{if $n \equiv 0 \mod 4$}.\\
		\end{cases}
		$$
		If  $x^3+x+\alpha^{-1}$ has no root in $\mathbb F_{2^r}$ $$w_{r,\alpha}(n)=
		\begin{cases}
		r & \text{if $(n,6)=1$},\\
		2r & \text{if $(n,6)=2$},\\
		r+2 & \text{if $(n,6)=3$},\\
		2r+2 & \text{if $(n,6)=6$}.\\
		\end{cases}
		$$
	\end{lemma}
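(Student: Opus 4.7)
My plan is to mimic the proof of Lemma~\ref{lemma-C2-dimensions} with $q_3=q_1+q_2$ in place of $q_2$. Since polarisation is additive, the polarisation of $q_3$ is the sum of the polarisations of $q_1$ and $q_2$, namely $xy^q+x^qy+x^{2q}y+xy^{2q}+x^qy^2+x^2y^q$, with $q=2^r$. Using $\Tr(z)=\Tr(z^{2^i})$ I would move every one of the six terms into the shape ``$y^{2q}$ times a polynomial in $x$''; the six resulting coefficients then pair up cleanly as $\alpha(x+x^{q^2})+\alpha^2(x^2+x^{2q^2})+\alpha^2(x^4+x^{4q^2})=\alpha u+\alpha^2 u^2+\alpha^2 u^4$, where $u:=x+x^{q^2}$. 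Non-degeneracy of the absolute trace pairing on $\F_{q^n}$, together with bijectivity of $y\mapsto y^{2q}$, then gives that $x\in W_{3,\alpha}(n)$ iff $\alpha u(1+\alpha u+\alpha u^3)=0$, i.e.\ $u=0$ or $u^3+u+\alpha^{-1}=0$. This is where the cubic of the statement enters.

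Set $V_\alpha:=\{0\}\cup\{v\in\overline{\F_2}:v^3+v+\alpha^{-1}=0\}$; this is a four-element set, since the cubic is separable in characteristic two (its derivative $(v+1)^2$ shares no root with it) and its roots form an $\F_2$-subspace (their sum vanishes as the coefficient of $v^2$ is zero). With $\phi:\F_{q^n}\to\F_{q^n}$ denoting the additive map $x\mapsto x+x^{q^2}$, we have $W_{3,\alpha}(n)=\phi^{-1}(V_\alpha\cap\mathrm{Im}\,\phi)$, so
\[
\dim_{\F_2}W_{3,\alpha}(n)=r(n,2)+\log_2|V_\alpha\cap\mathrm{Im}\,\phi|,
\]
since $\ker\phi=\F_{q^n}\cap\F_{q^2}=\F_{q^{(n,2)}}$. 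Additive Hilbert~$90$ identifies $\mathrm{Im}\,\phi$ with the kernel of the relative trace $\mathrm{Tr}_{\F_{q^n}/\F_{q^{(n,2)}}}$, reducing the remaining work to a trace computation on the nonzero elements of $V_\alpha$.

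For case~(i), every root of the cubic lies in $\F_{q^2}$: in the ``three roots'' subcase all three are in $\F_q$, and in the ``one root'' subcase the other two are a Galois-conjugate pair in $\F_{q^2}\setminus\F_q$. Hence the nonzero elements of $V_\alpha$ enter $\F_{q^n}$ only when $n$ is even, and among these the trace to $\F_{q^2}$ equals $(n/2)$ times the root, which vanishes exactly when $4\mid n$. This gives $|V_\alpha\cap\mathrm{Im}\,\phi|=1,1,4$ for $n$ odd, $n\equiv 2\pmod{4}$, $n\equiv 0\pmod{4}$ respectively, recovering $w_{r,\alpha}(n)=r,2r,2r+2$. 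For case~(ii), the three roots form a single Galois orbit in $\F_{q^3}\setminus\F_q$, so they enter $\F_{q^n}$ iff $3\mid n$; when they do, the relative trace to $\F_{q^{(n,2)}}$ reduces by transitivity to the $\F_q$-trace $v+v^q+v^{q^2}=0$ of a root. Translating across the four residues of $(n,6)\in\{1,2,3,6\}$ yields $w_{r,\alpha}(n)\in\{r,2r,r+2,2r+2\}$ as stated.

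The principal obstacle is the algebraic manipulation in the first paragraph: one has to recognise the correct power of $y$ (namely $y^{2q}$) for which, after applying the trace identities, the coefficients pack cleanly as a polynomial in $u=x+x^{q^2}$ alone. This collapse is what reduces an apparently high-degree linearised equation in $x$ to the degree-$4$ equation $\alpha^2 u^4+\alpha^2 u^2+\alpha u=0$, thereby exposing the cubic $u^3+u+\alpha^{-1}$ that governs all four dimension formulae.
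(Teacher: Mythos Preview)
Your approach is essentially identical to the paper's: both rewrite $B_{3,\alpha}$ as $\Tr_{rn}\bigl(y^{2q}\cdot\alpha^2(\alpha^{-1}u+u^2+u^4)\bigr)$ with $u=x+x^{q^2}$, factor the coefficient as $\alpha^2 u(u+a)(u+b)(u+c)$ where $a,b,c$ are the roots of $t^3+t+\alpha^{-1}$, and then determine which elements of $\{0,a,b,c\}$ lie in the image of $\phi:x\mapsto x+x^{q^2}$ on $\F_{q^n}$. Your formulation via additive Hilbert~90, identifying $\mathrm{Im}\,\phi$ with the kernel of the relative trace to $\F_{q^{(n,2)}}$, is in fact more explicit than the paper's rather terse final paragraph.

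One slip needs correcting in your case~(i) analysis. You assert that ``the nonzero elements of $V_\alpha$ enter $\F_{q^n}$ only when $n$ is even'', but this is false whenever the cubic has a root in $\F_q$: in the three-roots subcase all of $a,b,c$ lie in $\F_q\subset\F_{q^n}$ for \emph{every} $n$, and in the one-root subcase the distinguished root $a\in\F_q$ is likewise always present. The correct argument for $n$ odd is the trace computation you set up but then omit: for $v\in\F_q$ one has $\mathrm{Tr}_{\F_{q^n}/\F_q}(v)=nv=v\neq 0$ (as $n$ is odd), so $v\notin\mathrm{Im}\,\phi$, whence $|V_\alpha\cap\mathrm{Im}\,\phi|=1$ as you claim. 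With this line inserted your argument is complete and matches the paper's.
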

	\begin{proof}
		Let $B_{3,\alpha}(x,y)=\Tr_{rn}(\alpha(q_3(x+y)+q_3(x)+q_3(y)))$ where $\alpha \in \mathbb F_{2^r}$. Then we have $$B_{3,\alpha}(x,y)=\Tr_{rn}(\alpha^2 y^{2^{r+1}}[(\alpha^{-1}x+ x^2+ x^4)+(\alpha^{-1}x+ x^2+x^4)^{2^{2r}}])$$ and the radical of  $B_{3,\alpha}(x,y)$ is $$W_{3,\alpha}(n)=\{ x \in \mathbb F_{2^{rn}} \: | \: (\alpha^{-1}x+ x^2+ x^4)+(\alpha^{-1}x+ x^2+x^4)^{2^{2r}}=0 \}.$$ Let $x^4+x^2+\alpha^{-1} x=x(x+a)(x+b)(x+c)$ where $a,b,c \in \overline{\mathbb F_{2^r}}$. Then $$(x^4+x^2+\alpha^{-1} x)+(x^4+x^2+\alpha^{-1} x)^{2^{2r}}=(x^{2^{2r}}+x)(x^{2^{2r}}+x+a)(x^{2^{2r}}+x+b)(x^{2^{2r}}+x+c).$$ Therefore $w_{r,\alpha}(n)=\frac12\#\{u \: : \: u \in \{0,a,b,c\} \text{ and } u \in \mathbb F_{2^{rn}} \}+r(n,2).$ Moreover if  $x^3+x+\alpha^{-1}$ is irreducible over $\mathbb F_{2^r}$, then all roots of $x^3+x+\alpha^{-1}$ are in $\mathbb F_{2^{3r}}$,  if $x^3+x+\alpha^{-1}$ has one root in $\mathbb F_{2^r}$ and two roots in $\mathbb F_{2^{2r}}$; or three roots in $\mathbb F_{2^r}$ then $x^{4r}+x=0$ for all $x \in W_{2,\alpha}(n)$  and hence all roots of $x^3+x+\alpha^{-1}$ are in $\mathbb F_{2^{4r}}$. \qed	
	\end{proof}
	
\begin{lemma}\label{onto-radical}
	Let $x^{2^{2r}}+x+b=0$ where $b \in \mathbb F_{2^{2r}}$. Then $$\Tr_{4r}(\alpha(x^{2^{r+1}+1}+x^{2^r+2}+x^{2^r+1}+x^2))=\begin{cases}0 \text { if } b \in \mathbb F_{2^{2r}} \\ 1 \text{ if } b \in \mathbb F_{2^{2r}} \backslash \mathbb F_{2^r} \end{cases}$$  where $\alpha \in \mathbb F_{2^r}^\times$. Therefore, if $x^3+x+\alpha^{-1}$ has zero root or three roots in $\mathbb F_{2^r}$, then $\Tr_{4r}(\alpha q_3(W_{3,\alpha}))=0$ and if $x^3+x+\alpha^{-1}$ has one root in $\mathbb F_{2^r}$, then $\Tr_{4r}(\alpha q_3(W_{3,\alpha}))=\mathbb F_2$.
\end{lemma}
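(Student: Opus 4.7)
The plan is to reduce the absolute trace $\Tr_{4r}(\alpha q_3(x))$ to a trace of a constant in $b$ and $b_1 := b^{2^r}$, and then finish by case analysis using the constraint imposed by $x \in W_{3,\alpha}$ (implicit in the intended reading of the lemma). Since $\alpha \in \mathbb F_{2^r}$, trace transitivity gives $\Tr_{4r}(\alpha q_3(x)) = \Tr_r(\alpha T)$ where $T := q_3(x) + q_3(x)^{2^r} + q_3(x)^{2^{2r}} + q_3(x)^{2^{3r}}$. Setting $y = x^{2^r}$ so that $q_3(x) = xy^2 + x^2y + xy + x^2$, the four Frobenius iterates form a length-$4$ cycle $x \mapsto y \mapsto z \mapsto w \mapsto x$ under $(\cdot)^{2^r}$, where the relation $x^{2^{2r}} = x + b$ yields $z = x + b$ and $w = y + b_1$.

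The core computation is to expand the four terms $q_3(x)^{2^{ir}}$ after substituting $z = x + b$ and $w = y + b_1$, and to track how every monomial containing $x$ or $y$ appears an even number of times (and hence cancels in characteristic 2). The mixed monomials $xy^2, x^2y, xy$ each appear four times, the pure squares $x^2, y^2$ each twice, and the $b$-weighted linears $by^2, b^2y, by, b_1 x^2, b_1^2 x, b_1 x$ each twice. The sole survivors are pure constants in $b, b_1$, collecting to
$$T \;=\; b^2 + b_1^2 + bb_1 + bb_1^2 + b^2 b_1 \;\in\; \mathbb F_{2^r},$$
the membership $T \in \mathbb F_{2^r}$ being a direct check via $T^{2^r} = T$ under $b \leftrightarrow b_1$.

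For the first case $b \in \mathbb F_{2^r}$, one has $b_1 = b$ and $T$ collapses to $b^2$. The constraint from $x \in W_{3,\alpha}$, namely $b(\alpha b^3 + \alpha b + 1) = 0$, gives $\alpha = 1/(b(1+b)^2)$ for $b \ne 0$, so $\alpha b^2 = b/(1+b)^2 = c + c^2$ with $c := b/(1+b) \in \mathbb F_{2^r}$, and $\Tr_r$ annihilates this. For the second case $b \in \mathbb F_{2^{2r}} \setminus \mathbb F_{2^r}$, the conjugate pair $b, b_1$ are two roots of $\alpha x^3 + \alpha x + 1$, whose third root $b_2$ lies in $\mathbb F_{2^r}$; Vieta's relations yield $s := b + b_1 = b_2$, $p := bb_1 = 1 + s^2$, $\alpha p s = 1$, and substituting these into $T = s^2 + p + ps$ produces the clean $\alpha T = \alpha + 1$. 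Finally, the partial-fraction identity $1/(u(1+u)^2) = 1/u + 1/(1+u) + 1/(1+u)^2$ combined with $\Tr_r(v^2) = \Tr_r(v)$ gives $\Tr_r(\alpha) = \Tr_r(1/b_2)$, and the irreducibility of the quadratic factor $x^2 + b_2 x + (1 + b_2^2)$ of the cubic over $\mathbb F_{2^r}$ (forced by having only one rational root) is precisely $\Tr_r(1/b_2) + \Tr_r(1) = 1$, so $\Tr_r(\alpha + 1) = 1$ as claimed.

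The ``Therefore'' clause is then immediate: $x \in W_{3,\alpha}$ confines $b$ to $\{0\} \cup \{\text{roots of } \alpha x^3 + \alpha x + 1\}$, whose distribution between $\mathbb F_{2^r}$ and $\mathbb F_{2^{2r}} \setminus \mathbb F_{2^r}$ depends on the number of rational roots of the cubic, and the two cases of the formula combine exactly as stated. The main obstacle is the bookkeeping in the core computation, where sixteen-plus monomials must be tracked to see the full cancellation down to $T = b^2 + b_1^2 + bb_1 + bb_1^2 + b^2 b_1$; once this reduction is in hand, both case computations boil down to the identities $\alpha b^2 = c + c^2$ and $\alpha T = \alpha + 1$, and the trace evaluations follow from the partial-fraction decomposition and the standard Vieta setup.
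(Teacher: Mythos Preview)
Your argument is correct and in fact more complete than the paper's own proof. The paper descends only one step, writing $\Tr_{4r}(\alpha q_3(x))=\Tr_{2r}\bigl(\alpha q_3(x)+(\alpha q_3(x))^{2^{2r}}\bigr)$ via $x^{2^{2r}}=x+b$, expands, and asserts that everything except $\Tr_{2r}(\alpha b^2)$ vanishes; the final equality $\Tr_{2r}(\alpha b^2)=1$ for $b\in\F_{2^{2r}}\setminus\F_{2^r}$ is then stated with no justification. You instead descend fully to $\Tr_r$ via the four-cycle $x\mapsto y\mapsto x+b\mapsto y+b_1$, obtaining the symmetric reduction $\Tr_{4r}(\alpha q_3(x))=\Tr_r(\alpha T)$ with $T=s^2+ps+p$ where $s=b+b_1$ and $p=bb_1$. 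This is the right intermediate object: one checks directly that $\Tr_{2r}(\alpha b^2)=\Tr_r(\alpha s^2)$, which differs from $\Tr_r(\alpha T)$ by $\Tr_r(\alpha p(s+1))$, so the paper's dropped $x$-terms do \emph{not} all have vanishing trace.

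What makes your proof work is that you correctly invoke the constraint $b(\alpha b^3+\alpha b+1)=0$ coming from $x\in W_{3,\alpha}$. This is essential: the displayed trace formula in the lemma is not valid for an arbitrary $b\in\F_{2^{2r}}\setminus\F_{2^r}$. In the one-root case your Vieta relations collapse $\alpha T$ to $\alpha+1$, and the irreducibility criterion for the quadratic cofactor $x^2+b_2x+(1+b_2^2)$ together with the partial-fraction identity for $\alpha=1/(b_2(1+b_2)^2)$ pin down $\Tr_r(\alpha+1)=1$. This is precisely the missing piece in the paper's sketch, and your treatment of both cases (including the $\alpha b^2=c+c^2$ identity when $b\in\F_{2^r}$) is clean and complete.
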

\begin{proof}
	We expand the trace 
	\begin{align*}
	&\Tr_{4r}(\alpha(x^{2^{r+1}+1}+x^{2^r+2}+x^{2^r+1}+x^2))\\
	=&\Tr_{2r}(\alpha(x^{2^{r+1}+1}+x^{2^r+2}+x^{2^r+1}+x^2)+(\alpha(x^{2^{r+1}+1}+x^{2^r+2}+x^{2^r+1}+x^2)^{2^{2r}})\\
	=&\Tr_{2r}(\alpha((x^{2^{r+1}+1}+(x+b)^{2^{r+1}+1})+(x^{2^{r}+2}+(x+b)^{2^{r}+2})+(x^{2^{r}+1}+(x+b)^{2^{r}+1})+b^2))\\
	=&\Tr_{2r}(\alpha b^2)+\Tr_{2r}(\alpha((b^{2^{r+1}+1}+b^{2^r+2}))+\Tr_{2r}(\alpha b^{2^r+1})\\
	+&\Tr_{2r}(bx^{2^{r+1}}+b^{2^r}x^2)+\Tr_{2r}(bx^{2^{r}}+b^{2^r}x)+\Tr_{2r}(b^2x^{2^{r}}+b^{2^{r+1}}x)\\
	=&\Tr_{2r}(\alpha b^2)\\
	=&\begin{cases}0 \text { if } b \in \mathbb F_{2^{2r}} \\ 1 \text{ if } b \in \mathbb F_{2^{2r}} \backslash \mathbb F_{2^r}. \end{cases}
	\end{align*}
		If $f_{\alpha}$ has three roots in $\mathbb F_{2^r}$ (resp. one root in $\mathbb F_{2^r}$ and two roots in $\mathbb F_{2^{2r}}$), then $$(\alpha^{-1}x+ x^2+ x^4)+(\alpha^{-1}x+ x^2+x^4)^{2^{2r}}=(x^{2^{2r}}+x)(x^{2^{2r}}+x+a)(x^{2^{2r}}+x+b)(x^{2^{2r}}+x+c)$$ where $a,b,c \in \F_{2^r}$ (resp. $a \in \F_{2^r}$ and $b,c \in \mathbb F_{2^{2r}}$). 	If $f_{\alpha}$ has no root in $\mathbb F_{2^r}$, then $W_{3,\alpha}(4)=\mathbb F_{2^{2r}}$. \qed
\end{proof}

\begin{theorem}\label{thm:C3 points}
	 If $r$ is odd and $x^3+x+\alpha^{-1}$ has three roots in $\mathbb F_{2^r}$, then we have
	 $$ \#C_{3,\alpha}(\mathbb F_{2^{rn}})=
	 \begin{cases}
	 (2^{rn}+1)+2^{r(n+1)/2} &\text{if $n \equiv 1,7 \mod 8$},\\
	 (2^{rn}+1)-2^{r(n+1)/2} &\text{if $n \equiv 3,5 \mod 8$},\\
	 (2^{rn}+1) &\text{if $n \equiv 2 \mod 8$},\\
	 (2^{rn}+1)+2^{r(n+2)/2+1} &\text{if $n \equiv 4 \mod 8$},\\
	 (2^{rn}+1)-2^{r(n+2)/2+1} &\text{if $n \equiv 0 \mod 8$}.\\
	 \end{cases}
	 $$
 If $r$ is even and $x^3+x+\alpha^{-1}$ has three roots in $\mathbb F_{2^r}$, then we have
	 $$ \#C_{3,\alpha}(\mathbb F_{2^{rn}})=
	 \begin{cases}
	 (2^{rn}+1)+2^{r(n+1)/2} &\text{if $n \equiv 1,3,5,7 \mod 8$},\\
	 (2^{rn}+1) &\text{if $n \equiv 2 \mod 8$},\\
	 (2^{rn}+1)-2^{r(n+2)/2+1} &\text{if $n \equiv 4 \mod 8$},\\
	 (2^{rn}+1)-2^{r(n+2)/2+1} &\text{if $n \equiv 0 \mod 8$}.\\
	 \end{cases}
	 $$
If $r$ is odd and $x^3+x+\alpha^{-1}$ has one root in $\mathbb F_{2^r}$,	 then we have
	 $$ \#C_{3,\alpha}(\mathbb F_{2^{rn}})=
	 \begin{cases}
	 (2^{rn}+1)+2^{r(n+1)/2} &\text{if $n \equiv 1,7 \mod 8$},\\
	 (2^{rn}+1)-2^{r(n+1)/2} &\text{if $n \equiv 3,5 \mod 8$},\\
	 (2^{rn}+1) &\text{if $n \equiv 2,4 \mod 8$},\\
	 (2^{rn}+1)-2^{r(n+2)/2+1} &\text{if $n \equiv 0 \mod 8$}.\\
	 \end{cases}
	 $$
If $r$ is even and $x^3+x+\alpha^{-1}$ has one root in $\mathbb F_{2^r}$, then we have
	 $$ \#C_{3,\alpha}(\mathbb F_{2^{rn}})=
	 \begin{cases}
	 (2^{rn}+1)+2^{r(n+1)/2} &\text{if $n \equiv 1,3,5,7 \mod 8$},\\
	 (2^{rn}+1) &\text{if $n \equiv 2,4 \mod 8$},\\
	 (2^{rn}+1)-2^{r(n+2)/2+1} &\text{if $n \equiv 0 \mod 8$}.\\
	 \end{cases}
	 $$
If  $x^3+x+\alpha^{-1}$ has no root in $\mathbb F_{2^r}$, the possibilities for $\#C_{3,\alpha}(\mathbb F_{2^{rn}})$ are given by 
Table~\ref{table:C3pointsnoroots} in the appendix.
\end{theorem}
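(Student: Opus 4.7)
The plan is to mirror the strategy used to prove Theorems~\ref{thm-points on curve 1} and~\ref{thm-points on curve 2}. The previous lemma already supplies the radical dimension $w_{r,\alpha}(n)$, so by~(\ref{equation-Lambda}) only the sign $\Lambda_{3,\alpha}(n) \in \{-1, 0, +1\}$ remains to be determined. Proposition~\ref{minimal-prop} together with Corollary~\ref{cor-mod-p} reduces the problem to computing $\#C_{3,\alpha}(\mathbb F_{2^{r\cdot 2^u}})$ for $u$ up to the smallest value $v$ for which the curve becomes minimal; when $x^3+x+\alpha^{-1}$ has a root in $\mathbb F_{2^r}$ the relevant $u$'s are $0,1,2,3$, while in the rootless case one additionally needs $n = 3, 6, 12$ because the radical only realises in $\mathbb F_{2^{6r}}$.

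First I would compute the case $n=1$ directly: for $x \in \mathbb F_{2^r}$ one has $x^{2^{r+1}+1} = x^3$, $x^{2^r+2} = x^3$ and $x^{2^r+1} = x^2$, so $q_3(x)=0$ on $\mathbb F_{2^r}$. Hence $\#C_{3,\alpha}(\mathbb F_{2^r}) = 2^{r+1}+1$, giving $\Lambda_{3,\alpha}(1) = +1$; combined with the previous lemma and Corollary~\ref{cor-mod-p} this settles all odd $n$ in every sub-case. For $n=2$ I would read off the sign from $\Tr_{2r}(\alpha q_3(x))$ restricted to $\mathbb F_{2^{2r}}$ via the specialisation of Lemma~\ref{onto-radical}, which together with Corollary~\ref{cor-mod-p} handles $n \equiv 2 \pmod 4$.

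Next I would treat $n = 4$, where Lemma~\ref{onto-radical} provides the crucial dichotomy. If $x^3+x+\alpha^{-1}$ has $0$ or $3$ roots in $\mathbb F_{2^r}$ then $\Tr_{4r}(\alpha q_3(W_{3,\alpha}(4)))=0$, so Proposition~\ref{counts} forces $\Lambda_{3,\alpha}(4) = \pm 1$, and Proposition~\ref{minimal-prop} together with Lemma~\ref{lemma-unity} (used to exclude the possibility of expressing $\sqrt 2$ as a $\mathbb Q$-linear combination of $4$th roots of unity, and similarly for $8$th or $16$th roots at the next step) pins down the sign; passing to $n=8$ by the same minimality/Lemma~\ref{lemma-unity} argument fills in the row $n \equiv 0 \pmod 8$. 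If instead $x^3+x+\alpha^{-1}$ has exactly one root in $\mathbb F_{2^r}$, Lemma~\ref{onto-radical} gives $\Tr_{4r}(\alpha q_3(W_{3,\alpha}(4))) = \mathbb F_2$, which forces $\Lambda_{3,\alpha}(4) = 0$, and then again the $n=8$ value is determined by Proposition~\ref{minimal-prop} and Lemma~\ref{lemma-unity}. The $r$ odd versus $r$ even split comes out of Lemma~\ref{lemma-unity} precisely as in the proof of Theorem~\ref{thm-points on curve 1}.

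The main obstacle will be the rootless sub-case that produces Table~\ref{table:C3pointsnoroots}. There the radical sits inside $\mathbb F_{2^{6r}}$ rather than $\mathbb F_{2^{4r}}$, so the periodicity becomes genuinely richer and one must interleave Proposition~\ref{prop-mod-p} at the prime $p=3$ with the $2$-power reduction of Corollary~\ref{cor-mod-p}. I would compute $\#C_{3,\alpha}(\mathbb F_{2^{3r}})$ and $\#C_{3,\alpha}(\mathbb F_{2^{6r}})$ from the radical analysis of the preceding lemma, bootstrap to $\mathbb F_{2^{12r}}$ via Proposition~\ref{minimal-prop}, and then invoke Lemma~\ref{lemma-unity} -- this time in its stronger form ruling out $\sqrt 2$ as a $\mathbb Q$-combination of primitive $16$th roots of unity -- to decide between $\Lambda = +1$ and $\Lambda = -1$ at each depth. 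Finally, Corollary~\ref{cor-mod-p} transports all of this back to arbitrary $n$ and yields the table.
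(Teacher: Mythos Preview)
Your overall strategy matches the paper's: use the radical dimensions from the preceding lemma, determine the sign $\Lambda_{3,\alpha}$ at a few anchor values of $n$ via Lemma~\ref{onto-radical} and direct computation, then propagate using Proposition~\ref{minimal-prop}, Corollary~\ref{cor-mod-p} and Lemma~\ref{lemma-unity}. Your treatment of $n=1$ (where $q_3$ vanishes identically on $\mathbb F_{2^r}$), and of $n=4$ and $n=8$ in the one-root and three-root cases, is exactly what the paper does, only more explicit.

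There is one genuine gap in the rootless sub-case. You propose to ``bootstrap to $\mathbb F_{2^{12r}}$ via Proposition~\ref{minimal-prop}'' starting from $n=6$, but Proposition~\ref{minimal-prop} only applies when $C_{3,\alpha}(\mathbb F_{2^{6r}})$ is maximal or minimal. It is neither: at $n=6$ the radical $W_{3,\alpha}(6)$ has dimension $2r+2$, and one checks (as your own radical analysis would reveal) that $\Tr_{6r}(\alpha q_3(\cdot))$ is onto on $W_{3,\alpha}(6)$, so $\Lambda_{3,\alpha}(6)=0$ and the doubling trick does not fire. The paper sidesteps this by arguing directly at $n=12$: there one needs $\Tr_{12r}(\alpha q_3(W_{3,\alpha}(12)))=0$, which is the natural extension of Lemma~\ref{onto-radical} to $b\in\mathbb F_{2^{3r}}$ (the three roots of $x^3+x+\alpha^{-1}$ now lie in $\mathbb F_{2^{3r}}\setminus\mathbb F_{2^r}$, but the same trace expansion shows the restriction vanishes). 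This forces $C_{3,\alpha}(\mathbb F_{2^{12r}})$ to be maximal or minimal, and then Lemma~\ref{lemma-unity} together with the known value at $n=1$ (or $n=3$) decides which; Proposition~\ref{minimal-prop} then gives minimality at $n=24$. So the fix is to analyse $n=12$ directly rather than trying to reach it from $n=6$.

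A minor remark: Lemma~\ref{onto-radical} as stated concerns $\Tr_{4r}$, so invoking it verbatim for $n=2$ is not quite right; you need the separate (easy) observation that on $W_{3,\alpha}(2)=\mathbb F_{2^{2r}}$ one has $\Tr_{2r}(\alpha q_3(x))=\Tr_{2r}(\alpha x^2)$, which is surjective, giving $\Lambda_{3,\alpha}(2)=0$. The paper is equally silent here, handling it only by the phrase ``in the same manner as Theorem~\ref{thm-points on curve 1}''.
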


\begin{proof}
		We will prove this theorem in same manner of Theorem \ref{thm-points on curve 1} and we will also use Corollary \ref{cor-mod-p} for the case $x^3+x+\alpha^{-1}$ has no root in $\mathbb F_{2^r}$. Note that the genus of $C_{3,\alpha}$ is $2^{r}$ for all $\alpha \in \mathbb F_{2^r}^\times$.\\
		
		If $x^3+x+\alpha^{-1}$ has one root or three roots in $\mathbb F_{2^r}$ then $C_{3,\alpha}(\mathbb F_{2^{8r}})$ is maximal or minimal and 	if $x^3+x+\alpha^{-1}$ has no root in $\mathbb F_{2^r}$ then $C_{3,\alpha}(\mathbb F_{2^{24r}})$ is maximal or minimal. Since $\sqrt {2^r}$ (or $\sqrt {2^{3r}}$) cannot be written as a $\mathbb Z$-linear combination of $\{\eta \: | \: \eta^8=-1\}$ by Lemma \ref{lemma-unity}, if $x^3+x+\alpha^{-1}$ has one root or three roots in $\mathbb F_{2^r}$ then $C_{3,\alpha}(\mathbb F_{2^{8r}})$ is minimal and if $x^3+x+\alpha^{-1}$ has zero root in $\mathbb F_{2^r}$ then $C_{3,\alpha}(\mathbb F_{2^{24r}})$ is minimal.\\
		
		If $x^3+x+\alpha^{-1}$ has one root in $\mathbb F_{2^r}$ then  $\#C_{3,\alpha}(\mathbb F_{2^{4r}})-(2^{4r}+1)=0$, if $x^3+x+\alpha^{-1}$ has  three roots in $\mathbb F_{2^r}$ and $r$ is odd (even) then $C_{3,\alpha}(\mathbb F_{2^{4r}})$ is maximal (minimal), if $x^3+x+\alpha^{-1}$ has zero root in $\mathbb F_{2^r}$ and $r$ is odd (even) then $C_{3,\alpha}(\mathbb F_{2^{12r}})$ is maximal (minimal) by Lemma \ref{onto-radical}. \qed
\end{proof}

\begin{lemma}\label{quadratic-solutions}
	A quadratic equation $x^2+bx+a= 0$, $a \in \mathbb F_{2^r}$ and $b \in \mathbb F_{2^r}^\times$, has two distinct roots in $\mathbb F_{2^r}$
	if and only if $\Tr_{r}(a/b^2)= 0$.
\end{lemma}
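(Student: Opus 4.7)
The plan is to reduce the general quadratic to an Artin--Schreier equation and then invoke the classical characterisation of the image of $y \mapsto y^2+y$ via the absolute trace. Concretely, since $b \in \mathbb{F}_{2^r}^{\times}$, substitute $x = by$ into $x^2+bx+a=0$. Dividing the resulting equation $b^2y^2 + b^2 y + a = 0$ by $b^2$ yields
\[
y^2 + y = \frac{a}{b^2}.
\]
This substitution is a bijection $\mathbb{F}_{2^r} \to \mathbb{F}_{2^r}$, so the original equation has two distinct roots in $\mathbb{F}_{2^r}$ if and only if the transformed Artin--Schreier equation has (two) roots in $\mathbb{F}_{2^r}$.

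Next, I would use the standard fact that the additive map $\varphi : \mathbb{F}_{2^r} \to \mathbb{F}_{2^r}$, $\varphi(y) = y^2+y$, is $\mathbb{F}_2$-linear with kernel $\{0,1\} = \mathbb{F}_2$, so its image has index $2$ in $\mathbb{F}_{2^r}$. Moreover, since characteristic is $2$, $\Tr_r(\varphi(y)) = \Tr_r(y^2) + \Tr_r(y) = 2\Tr_r(y) = 0$ for every $y$, so the image of $\varphi$ is contained in the hyperplane $\ker(\Tr_r)$. Both subspaces have dimension $r-1$ over $\mathbb{F}_2$, so they are equal. Hence $y^2+y = a/b^2$ is solvable in $\mathbb{F}_{2^r}$ exactly when $\Tr_r(a/b^2)=0$.

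Finally, I would observe that whenever one root $y_0$ exists, the other root of $y^2+y = a/b^2$ is $y_0 + 1 \neq y_0$, which also lies in $\mathbb{F}_{2^r}$; translating back via $x = by$ gives two distinct roots $by_0, b(y_0+1) \in \mathbb{F}_{2^r}$. Conversely, if no solution exists in $\mathbb{F}_{2^r}$, both roots lie in the quadratic extension $\mathbb{F}_{2^{2r}} \setminus \mathbb{F}_{2^r}$. This yields the stated equivalence. There is no real obstacle here; the argument is essentially a direct linearisation combined with the image description of $\varphi$, and I would present it in a few lines.
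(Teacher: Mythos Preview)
Your proof is correct and follows essentially the same route as the paper: divide through by $b^2$ to reduce to the Artin--Schreier equation $(x/b)^2 + (x/b) = a/b^2$, and then invoke the standard trace criterion for solvability. The paper's version is terser (it omits the justification that $\operatorname{im}(y\mapsto y^2+y)=\ker\Tr_r$ and the remark about distinctness of the two roots), but the substance is identical.
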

\begin{proof}
	Since $b \ne 0$, we can divide the equation by $b^2$. So we get $(x/b)^2+(x/b)=a/b^2$. Hence  $\Tr_{r}(a/b^2)= 0$ if and only if $x/b \in \mathbb F_{2^r}$ if and only if $x \in \mathbb F_{2^r}$. \qed
\end{proof}
\begin{lemma}\label{M013}
	Denote $f_{\beta}(x) = x^3 + x + \beta$, where $\beta \in \mathbb F_{2^r}^\times$. Let 
	$$M_i = \#\{f_{\beta} \: | \: f_{\beta}(x) \text{ has precisely $i$ roots in } \mathbb F_{2^r}\}.$$ Then we have $(M_3,M_1,M_0)=\big(\frac{2^{r-1}-1}{3},2^{r-1}-1,\frac{2^r+1}{3}\big)$ if $r$ is odd, $(M_3,M_1,M_0)=\big(\frac{2^{r-1}-2}{3},2^{r-1},\frac{2^r-1}{3}\big)$ if $r$ is even.	
\end{lemma}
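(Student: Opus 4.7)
The plan is to reduce the count to an elementary trace computation by exploiting two facts. In characteristic $2$ the derivative $f_\beta'(x) = 3x^2 + 1 = (x+1)^2$ shows that any repeated root of $f_\beta$ must equal $1$; but $f_\beta(1) = \beta \ne 0$, so $f_\beta$ has distinct roots in an algebraic closure. Consequently the number of roots of $f_\beta$ in $\F_{2^r}$ is $0$, $1$ or $3$, since a cubic with two roots in any field automatically has its third root there.

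Counting the set $\{(a,\beta) \in \F_{2^r} \times \F_{2^r}^\times : f_\beta(a) = 0\}$ in two ways gives $3M_3 + M_1 = 2^r - 2$, because the map $a \mapsto a^3 + a = a(a+1)^2$ sends exactly the two elements $a \in \{0,1\}$ to $0$. Combined with the obvious identity $M_3 + M_1 + M_0 = 2^r - 1$, determining $M_3$ suffices.

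For $M_3$, fix $a \in \F_{2^r}\setminus\{0,1\}$ and write $f_{a^3+a}(x) = (x+a)(x^2 + ax + (a^2 + 1))$. Then $f_{a^3+a}$ has three roots in $\F_{2^r}$ iff this quadratic splits over $\F_{2^r}$, which by Lemma~\ref{quadratic-solutions} is equivalent to $\Tr_r((a^2+1)/a^2) = 0$, i.e., $\Tr_r(1/a) = \Tr_r(1)$. Hence
\[
3M_3 = \#\{a \in \F_{2^r}\setminus\{0,1\} : \Tr_r(1/a) = \Tr_r(1)\}.
\]

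To evaluate this, I use that $a \mapsto 1/a$ is a bijection on $\F_{2^r}^\times$, so $\#\{a \in \F_{2^r}^\times : \Tr_r(1/a) = 0\} = 2^{r-1} - 1$ and $\#\{a \in \F_{2^r}^\times : \Tr_r(1/a) = 1\} = 2^{r-1}$; in either case, since $\Tr_r(1/1) = \Tr_r(1)$, removing $a = 1$ subtracts one more. When $r$ is odd, $\Tr_r(1) = 1$ and this yields $3M_3 = 2^{r-1} - 1$; when $r$ is even, $\Tr_r(1) = 0$ and this yields $3M_3 = 2^{r-1} - 2$. The formulae for $M_1$ and $M_0$ then follow by the two linear identities. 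The only subtle point is the careful handling of the excluded boundary cases $a \in \{0,1\}$; everything else reduces to routine trace arithmetic.
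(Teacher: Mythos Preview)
Your proof is correct and follows essentially the same approach as the paper: both use the derivative to rule out repeated roots, factor $f_{a^3+a}(x)=(x+a)(x^2+ax+a^2+1)$ and apply Lemma~\ref{quadratic-solutions} to reduce to the trace condition $\Tr_r(1/a)=\Tr_r(1)$, and then combine this with the identities $3M_3+M_1=2^r-2$ and $M_3+M_1+M_0=2^r-1$. Your write-up is in fact a bit more explicit about the double-counting step and the boundary cases $a\in\{0,1\}$ than the paper's version.
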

\begin{proof}
	The derivative of $f_{\beta}(x)$ is $x^2+1$ for all $\beta \in \mathbb F_{2^r}^\times$. Thus it has multiple roots if and only if $1$ is a root of $f_\beta$. Moreover $x=1$ is root of $f_\beta$ if $\beta=0$ (which is not the case) and the other root of $f_0$ is $0$.\\
	Let $F^{\times\times} := \mathbb F_{2^r}\backslash\{0,1\}$. Now assume $f_\beta(x)$ has two different roots $a,b \in F^{\times\times}$. This means $f_\beta(a)=f_\beta(b)$. It follows that $(a+b)(b^2+ab+(a^2+1))=0$; that is, $b^2+ab+(a^2+1)=0$. By Lemma \ref{quadratic-solutions}, $\Tr_{r}(\frac{a^2+1}{a^2})=\Tr_{r}(\frac1a)+\Tr_{r}(1)$ must to be $0$.\\
	Note that $1/a$ is a permutation on the set $F^{\times\times}$ and half of $\mathbb F_{2^r}$ have trace $0$ and the other half of $\mathbb F_{2^r}$ have trace $1$. Therefore we have $M_3=[2^{r-1}-\#\{x \in \{0,1\} \: |\Tr_{r}(x) = \Tr_{r}(1)\}]/3$ and $M_1=(2^r-2)-3M_3$ and $M_0=(2^r-1)-M_3-M_1$.
\qed
\end{proof}

Using Corollary~\ref{pointcounting-thm} and Theorem~\ref{thm:C3 points}, and Lemma~\ref{M013}, Table~\ref{table:C3points} in the appendix presents the number of $\F_{2^{rn}}$-rational points on $C_3:y^{2^r}+y=q_3(x)$.

\section{Explicit formulae for $F_{q}(n,0,0,0)$}\label{sec:explicit}

In this section we give an explicit formula for $F_{q}(n,0,0,0)$. In addition we present
Fourier-analysed formula for the number of points on the curves $C_1$, $C_2$ and $C_3$, and thus for $F_{q}(n,0,0,0)$. 
We use the former to infer the characteristic polynomial of Frobenius of these curves and hence the decomposition of their Jacobians.

Using Lemma~\ref{lem:fitzgeneral} and Eq.~(\ref{simple}), we now combine the results contained in Tables~\ref{table:C1points},~\ref{table:C2points} 
and~\ref{table:C3points} to give the formula for $F_{q}(n,0,0,0)$, presented in Table~\ref{table:Fqn000} in the appendix.



\subsection{$C_{1}/\F_q: y^{q}+y = x^{q+1} + x^2$}
By Fourier-analysing the formulae in Table~\ref{table:C1points}, for $r$ odd we have:
\[
\#C_1(\F_{q^n}) = q^n + 1 -\frac{(q-1)(q-\sqrt{2q})}{4}((\sqrt{q}\omega_8)^n + (\sqrt{q}\omega_{8}^7)^n) 
-\frac{(q-1)(q + \sqrt{2q})}{4}((\sqrt{q} \omega_{8}^3)^n + (\sqrt{q}\omega_{8}^5)^n),
\]
while for $r$ even we have:
\[
\#C_1(\F_{q^n}) = q^n + 1 -\frac{(q-1)(q-2\sqrt{q})}{4}(\sqrt{q})^n -\frac{(q-1)(q+2\sqrt{q})}{4}(-\sqrt{q})^n 
-\frac{(q-1)q}{4}((\sqrt{q} \, i)^n + (-\sqrt{q} \, i)^n).
\]
The characteristic polynomial of Frobenius is therefore
\[
P_1(X) = (X^2 -\sqrt{2q}X + q)^{\frac{(q-1)(q -\sqrt{2q})}{4}} (X^2+\sqrt{2q}X + q)^{\frac{(q-1)(q+\sqrt{2q})}{4}} 
\]
if $r$ is odd, and
\[ 
P_1(X) = (X-\sqrt{q})^{\frac{(q-1)(q - 2\sqrt{q})}{4}} (X +\sqrt{q})^{\frac{(q-1)(q + 2\sqrt{q})}{4}}  (X^2 + q)^{\frac{(q-1)q}{4}}
\]
if $r$ is even. These factors all arise from dimension one supersingular abelian varieties~\cite[Theorems 12.1 \& 12.2]{VJGaryAlexey}.



\subsection{$C_2/\F_q: y^{q}+y = x^{2q+1} + x^{q+2}$}
By Fourier-analysing the formulae in Table~\ref{table:C2points}, for $r$ odd we have:
\begin{eqnarray*}
\#C_2(\F_{q^n}) = q^n + 1 &-&\frac{(q-1)(q-\sqrt{2q})}{4}((\sqrt{q}\omega_8)^n + (\sqrt{q}\omega_{8}^7)^n)\\ 
                              &-&\frac{(q-1)(q + \sqrt{2q})}{4}((\sqrt{q}\omega_{8}^3)^n + (\sqrt{q}\omega_{8}^5)^n)\\
                              &-&\frac{(q-1)q}{2}((\sqrt{q} \, i)^n + (-\sqrt{q} \, i)^n).
\end{eqnarray*}
Let $\omega_{6} = (1 + \sqrt{3}i)/2$. Then for $r$ even we have:
\begin{eqnarray*}
\#C_2(\F_{q^n}) = q^n + 1 &-&\frac{(q-1)(q-2\sqrt{q})}{12}(\sqrt{q})^n\\ 
                              &-&\frac{(q-1)(q+2\sqrt{q})}{12}(-\sqrt{q})^n\\ 
                              &-&\frac{(q-1)(q - \sqrt{q})}{3}((\sqrt{q}\omega_{6})^n + (\sqrt{q}\omega_{6}^{5})^n)\\
                              &-&\frac{(q-1)(q + \sqrt{q})}{3}((\sqrt{q}\omega_{6}^2)^n + (\sqrt{q}\omega_{6}^{4})^n)\\
                              &-&\frac{(q-1)q}{4}((\sqrt{q} \, i)^n + (-\sqrt{q} \, i)^n).
\end{eqnarray*}
The characteristic polynomial of Frobenius is therefore
\[
P_2(X) = (X^2-\sqrt{2q}X + q)^{\frac{(q-1)(q-\sqrt{2q})}{4}} (X^2+\sqrt{2q}X + q)^{\frac{(q-1)(q+\sqrt{2q})}{4}} 
(X^2 + q)^{\frac{(q-1)q}{2}}
\]
if $r$ is odd, and
\begin{eqnarray*}
P_2(X)=
&& (X-\sqrt{q})^{\frac{(q-1)(q - 2\sqrt{q})}{12}}  (X +\sqrt{q})^{\frac{(q-1)(q + 2\sqrt{q})}{12}} \cdot \\
&& (X^2 - \sqrt{q}X + q)^{\frac{(q-1)(q - \sqrt{q})}{3}}  (X^2 + \sqrt{q}X + q)^{\frac{(q-1)(q + \sqrt{q})}{3}} 
(X^2 + q)^{\frac{(q-1)q}{4}}
\end{eqnarray*}
if $r$ is even. Again, these factors all arise from dimension one supersingular abelian varieties~\cite[Theorems 12.1 \& 12.2]{VJGaryAlexey}.

\subsection{$C_3/\F_q: y^{q}+y = x^{2q+1} + x^{q+2} + x^{q+1} + x^2$}
By Fourier-analysing the formulae in Table~\ref{table:C3points}, for $r$ odd we have:
\begin{eqnarray*}
\#C_3(\F_{q^n}) = q^n + 1 &-&\frac{(q-2)q}{8}((\sqrt{q})^n + (-\sqrt{q})^n) \\
                                    &-&\frac{(q-2)q}{8}((\sqrt{q} \, i)^n + (-\sqrt{q} \, i)^n)\\
                              &-&\frac{(q+1)(q - \sqrt{2q})}{12}( (\sqrt{q}\omega_{24})^n + (\sqrt{q}\omega_{24}^{7})^n + 
                               (\sqrt{q}\omega_{24}^{17})^n + (\sqrt{q}\omega_{24}^{23})^n)\\
                              &-&\frac{(q+1)(q + \sqrt{2q})}{12}( (\sqrt{q}\omega_{24}^5)^n + (\sqrt{q}\omega_{24}^{11})^n + 
                               (\sqrt{q}\omega_{24}^{13})^n + (\sqrt{q}\omega_{24}^{19})^n)\\
                              &-&\frac{(q-2)(5q - 4\sqrt{2q})}{24}( (\sqrt{q}\omega_{8})^n + (\sqrt{q}\omega_{8}^{7})^n)\\
                              &-&\frac{(q-2)(5q + 4\sqrt{2q})}{24}( (\sqrt{q}\omega_{8}^3)^n + (\sqrt{q}\omega_{8}^{5})^n)\\
\end{eqnarray*}
For $r$ even we have:
\begin{eqnarray*}
\#C_3(\F_{q^n}) = q^n + 1 &-&\frac{(q-2\sqrt{q})(5q + 2\sqrt{q} - 4)}{24}(\sqrt{q})^n 
                               -\frac{(q+2\sqrt{q})(5q - 2\sqrt{q} - 4)}{24}(-\sqrt{q})^n\\
                              &-&\frac{q(5q - 8)}{24}((\sqrt{q} \, i)^n + (-\sqrt{q} \, i)^n)\\
                              &-&\frac{q^2}{8}( (\sqrt{q}\omega_{8})^n + (\sqrt{q}\omega_{8}^{7})^n)\\
                               &-&\frac{q^2}{8}(  (\sqrt{q}\omega_{8}^{3})^n + (\sqrt{q}\omega_{8}^{5})^n)\\
                              &-&\frac{(q-1)q}{12}( (\sqrt{q}\omega_{12})^n + (\sqrt{q}\omega_{12}^{5})^n + 
                                (\sqrt{q}\omega_{12}^{7})^n + (\sqrt{q}\omega_{12}^{11})^n)\\
                              &-&\frac{(q-1)(q - 2\sqrt{q})}{12}( (\sqrt{q}\omega_{6})^n + (\sqrt{q}\omega_{6}^{5})^n) \\
                              &-&\frac{(q-1)(q + 2\sqrt{q})}{12}( (\sqrt{q}\omega_{6}^2)^n + (\sqrt{q}\omega_{6}^{4})^n)             
\end{eqnarray*}

The characteristic polynomial of Frobenius is therefore
\begin{eqnarray*}
P_3(X) = 
&& (X^2 - q)^{\frac{(q-2)q}{8}} (X^2 + q)^{\frac{(q-2)q}{8}} (X^2-\sqrt{2q}X + q)^{\frac{(q-2)(5q-4\sqrt{2q})}{24}} (X^2+\sqrt{2q}X + q)^{\frac{(q-2)(5q+4\sqrt{2q})}{24}} \\
&& (X^4 - \sqrt{2q}X^3 + qX^2 -q\sqrt{2q}X + q^2)^{\frac{(q+1)(q-\sqrt{2q})}{12}} \\
&& (X^4 + \sqrt{2q}X^3 + qX^2 + q\sqrt{2q}X + q^2)^{\frac{(q+1)(q+\sqrt{2q})}{12}}
\end{eqnarray*}
if $r$ is odd, and 
\begin{eqnarray*}
P_3 = && (X-\sqrt{q})^{\frac{(q-2\sqrt{q})(5q + 2\sqrt{q} - 4)}{24}} (X + \sqrt{q})^{\frac{(q+2\sqrt{q})(5q - 2\sqrt{q} - 4)}{24}} \\
&& (X^2 + q)^{\frac{q(5q-8)}{24}} (X^2 - \sqrt{q}X + q)^{\frac{q^2}{8}}  (X^2 + \sqrt{q}X + q)^{\frac{q^2}{8}} \\
&& (X^4 - qX^2 + q^2)^{\frac{(q-1)q}{12}}
(X^2 - \sqrt{q}X + q)^{\frac{(q-1)(q - 2\sqrt{q})}{12}}  (X^2 + \sqrt{q}X + q)^{\frac{(q-1)(q + 2\sqrt{q})}{12}}
\end{eqnarray*}
if $r$ is even.


These factors all arise from dimension one and two supersingular abelian varieties~\cite[Theorems 12.1 \& 12.2]{VJGaryAlexey}.
We remark that the three curves $C_1$, $C_2$, and $C_3$ have fairly large automorphism groups, and it may well be feasible to compute their 
characteristic polynomials of Frobenius by taking quotients and computing the decomposition of their Jacobian's via the Kani-Rosen 
theorem~\cite{kanirosen}, rather than use the techniques adopted here.


Finally, combining the expressions for $\#C_1(\F_{q^n})$, $\#C_2(\F_{q^n})$ and $\#C_3(\F_{q^n})$ as per Lemma~\ref{lem:fitzgeneral} and Eq.~(\ref{simple}) gives the following theorem.

\begin{theorem}
For $r$ odd we have:
\begin{eqnarray*}
F_q(n,0,0,0) = q^{n-3} - q^{n/2 - 3}\Big(&&\frac{q(q-1)(q-2)}{8}( 1^n + (-1)^n )\\
                           &+&\frac{q(q-1)(q+2)}{8}( i^n + (-i)^n )\\
                           &+&\frac{(q-1)(q+1)(q - \sqrt{2q})}{12}( \omega_{24}^n + \omega_{24}^{7n} +
                               \omega_{24}^{17n} + \omega_{24}^{23n})\\
                           &+&\frac{(q-1)(q+1)(q + \sqrt{2q})}{12}( \omega_{24}^{5n} + \omega_{24}^{11n} + 
                               \omega_{24}^{13n} + \omega_{24}^{19n})\\
                           &+&\frac{(q-1)(q - \sqrt{2q})(5q + \sqrt{2q} + 4)}{24}( \omega_{8}^n + \omega_{8}^{7n})\\
                           &+&\frac{(q-1)(q + \sqrt{2q})(5q - \sqrt{2q} + 4)}{24}( \omega_{8}^{3n} + \omega_{8}^{5n})
                         \Big).
\end{eqnarray*}
For $r$ even we have:
\begin{eqnarray*}
F_q(n,0,0,0) = q^{n-3} - q^{n/2 - 3}\Big(&&\frac{(q-1)(q - 2\sqrt{q})(5q + 2\sqrt{q} + 4)}{24} \, 1^n\\
                          &+&\frac{(q-1)(q + 2\sqrt{q})(5q - 2\sqrt{q} + 4)}{24}(-1)^n \\
                         &+&\frac{q(q-1)(5q+4)}{24}( i^n + (-i)^n )\\
                        &+&\frac{q^2(q-1)}{8}( \omega_{8}^n + \omega_{8}^{7n} + 
                                \omega_{8}^{3n} + \omega_{8}^{5n})\\
                         &+&\frac{q(q-1)^2}{12}( \omega_{12}^n + \omega_{12}^{5n} + 
                                \omega_{12}^{7n} + \omega_{12}^{11n})\\
                         &+&\frac{(q-1)(q - \sqrt{q})(q - \sqrt{q} + 2)}{12}( \omega_{6}^n + \omega_{6}^{5n}) \\
                         &+&\frac{(q-1)(q + \sqrt{q})(q + \sqrt{q} + 2)}{12}( \omega_{6}^{2n} + \omega_{6}^{4n}) \Big). 
\end{eqnarray*}
\end{theorem}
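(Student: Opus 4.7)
The plan is to assemble the formula by combining the three point-count expressions established in the preceding subsections. First I would make the bookkeeping explicit. Since the additive map $x \mapsto x^q+x$ on $\F_{q^n}$ has kernel $\F_q$, every $a\in\F_{q^n}$ with $T_1(a)=0$ has exactly $q$ preimages, and using Lemma~\ref{lem:T2T3} the conditions $T_2(x^q+x)=0$ and $T_3(x^q+x)=0$ become $q_1(x)=0$ and $q_2(x)=0$ respectively. Hence
\[
F_q(n,0,0,0) \;=\; \tfrac{1}{q}\,N(0,0),
\]
and applying the simplified version of Lemma~\ref{lem:fitzgeneral} recorded in Eq.~(\ref{simple}) gives
\[
F_q(n,0,0,0) \;=\; \tfrac{1}{q^2}\bigl(Z(q_1)+Z(q_2)+(q-1)Z(q_1+q_2)-q^n\bigr).
\]

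Next I would translate each $Z(q_i)$ into the corresponding curve point count. The additive polynomial $y^q+y$ is $q$-to-$1$ onto $\ker T_1$, so the equation $y^q+y=q_i(x)$ has either $0$ or $q$ solutions in $y$ according to whether $T_1(q_i(x))=0$ or not. Therefore $Z(q_i) = \tfrac{1}{q}\bigl(\#C_i(\F_{q^n})-1\bigr)$, where the $-1$ accounts for the point at infinity on $C_i$. Substituting into the previous display produces
\[
F_q(n,0,0,0) \;=\; \tfrac{1}{q^3}\Bigl(\#C_1(\F_{q^n})+\#C_2(\F_{q^n})+(q-1)\#C_3(\F_{q^n})-(q+1)-q^{n+1}\Bigr).
\]
A direct check shows $(q+1)(q^n+1)-q^{n+1}-(q+1)=q^n$, so the contributions of the ``$q^n+1$'' summands from each $\#C_i$ collapse to the leading term $q^{n-3}$, and what remains is $-q^{-3}$ times the weighted sum of the ``trace parts'' of $\#C_1,\#C_2$ and $(q-1)\#C_3$.

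The last step is purely arithmetic: substitute the Fourier-analysed expressions for $\#C_1(\F_{q^n})$, $\#C_2(\F_{q^n})$ and $\#C_3(\F_{q^n})$ derived in the three preceding subsections, treating the cases $r$ odd and $r$ even separately, and collect the coefficient of each $(\sqrt{q}\,\zeta)^n$ as $\zeta$ ranges over the finite set of roots of unity that appear. Pulling out the common factor $q^{n/2-3}$ yields precisely the two displayed formulae. The one piece of the bookkeeping that requires attention is the case $r$ even, where the Frobenius factors $X\pm\sqrt{q}$ appear in all three of $P_1,P_2,P_3$ with differing multiplicities; the claimed coefficients $\tfrac{(q-1)(q\mp2\sqrt{q})(5q\pm2\sqrt{q}+4)}{24}$ for the $(\pm 1)^n$ terms must be obtained by summing the three distinct contributions.

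The main obstacle is the sheer volume of this coefficient bookkeeping rather than any conceptual difficulty: the supersingularity encoded in the roots-of-unity expansions and Tate's isogeny theorem have already done the substantive work, and what remains is to verify, orbit by orbit, that the $r$-odd expansion produces the six families of terms (constant, $i$, $\omega_{24}$, $\omega_8$) and the $r$-even expansion produces the seven families (including $\omega_{12}$ and $\omega_6$) with the exact rational coefficients quoted. A safety check worth performing is that for $q=2$ the formula restricts to Proposition~\ref{prop:F2formulae}, providing independent confirmation of the simplification.
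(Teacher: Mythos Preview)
Your proposal is correct and follows precisely the paper's approach: the paper states only that ``combining the expressions for $\#C_1(\F_{q^n})$, $\#C_2(\F_{q^n})$ and $\#C_3(\F_{q^n})$ as per Lemma~\ref{lem:fitzgeneral} and Eq.~(\ref{simple}) gives the following theorem,'' and you have carried out exactly this combination, correctly deriving the intermediate identity $F_q(n,0,0,0)=q^{-3}\bigl(\#C_1+\#C_2+(q-1)\#C_3-(q+1)-q^{n+1}\bigr)$ and the collapse of the $(q^n+1)$ contributions to $q^{n-3}$.
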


\section{Concluding remarks}\label{sec:conclusion}

By Fourier-analysing known formulae which count the number of elements of $\F_{2^n}$ for which the first three coefficients of the characteristic polynomial with respect to $\F_2$ are prescribed, we developed a new much simpler curve-based approach to deriving them, in the trace zero cases 
for all $n$ and the trace one cases for odd $n$. This approach was
used to count the number of irreducible polynomials in $\F_{2^r}[x]$ for which the first three coefficients are zero.
Based on the $\F_2$ base field case and our result for $f_q(n,0,0,0)$, we conjecture that for $q = 2^r$, $n \ge 3$ and $t_1,t_2,t_3 \in \F_q$, 
the formulae for $f_q(n,t_1,t_2,t_3)$ all have period $24$. 

For odd $n$ it is possible to compute all of the trace zero cases with our approach, but for reasons of space we only sketch the method here. 
First note that although one can express $N(0,0)$ in terms of $Z(q_1)$, $Z(q_2)$ and $Z(q_1 + q_2)$ as per Lemma~\ref{lem:fitzgeneral},
it is not possible to express all $N(t_2,t_3)$ in terms of these. For example, for any $t_2 \in \F_{q}^{\times}$ one has 
$Z(q_1) = Z(t_2 q_1) = \sum_{t_3 \in \F_q} N(0,t_3)$, so one does not get a linear system of full rank. However, this issue can be obviated
by changing the definition of $q_1$ and $q_2$ to:
\begin{eqnarray}
\nonumber q_1 &:& \F_{q^n} \rightarrow \F_q: x \mapsto T_1(x^{q+1} + x^2) + t_2,\\
\nonumber q_2 &:& \F_{q^n} \rightarrow \F_q: x \mapsto T_1(x^{2q+1} + x^{q+2}) + t_3.
\end{eqnarray}
Then, for odd $n$ one has $T_1(t_2) = t_2$ and $T_1(t_3) = t_3$, and so 
computing the number of points on the supersingular curves $C': y^q + y = x^{q+1} + x^2 + t_2$,
$C_{\alpha}': y^q + y = x^{2q+1} + x^{q+2} + t_3 + \alpha(x^{q+1} + x^2 + t_2)$ for $\alpha \in \F_q$ will
give expressions for $F_{q}(n,0,t_2,t_3)$.

\bibliographystyle{plain}
\bibliography{supersingular18thJuly}

\section*{Appendix: point and trace counts}

\begin{table}[h]
\caption{The number of $\F_{2^{rn}}$-rational points on $C_1:y^{2^r}+y=q_1(x)$}
\begin{center}\label{table:C1points}
\begin{tabular}{c|c|c}
\hline
$n \pmod{8}$ & $r$ odd &  $r$ even \\
\hline
$0$ & $(2^{rn}+1)-(2^r-1)2^{r(n+2)/2}$  &  $ (2^{rn}+1)-(2^r-1)2^{r(n+2)/2}$  \\
$1$ &  $(2^{rn}+1)+(2^r-1)2^{r(n+1)/2}$ &  $(2^{rn}+1)+(2^r-1)2^{r(n+1)/2}$ \\
$2$ & $2^{rn}+1$ & $2^{rn}+1$\\
$3$ & $(2^{rn}+1)-(2^r-1)2^{r(n+1)/2}$ &  $(2^{rn}+1)+(2^r-1)2^{r(n+1)/2}$ \\
$4$ & $(2^{rn}+1)+(2^r-1)2^{r(n+2)/2}$  &  $ (2^{rn}+1)-(2^r-1)2^{r(n+2)/2}$   \\	
$5$ &  $(2^{rn}+1)-(2^r-1)2^{r(n+1)/2}$ &  $(2^{rn}+1)+(2^r-1)2^{r(n+1)/2}$  \\
$6$ & $2^{rn}+1$   &  $2^{rn}+1$      \\
$7$ & $(2^{rn}+1)+(2^r-1)2^{r(n+1)/2}$ &  $(2^{rn}+1)+(2^r-1)2^{r(n+1)/2}$  \\
\hline
\end{tabular}
\end{center}
\end{table}

\begin{table}[h]
\caption{The number of $\F_{2^{rn}}$-rational points on $C_2:y^{2^r}+y=q_2(x)$}
\begin{center}\label{table:C2points}
\begin{tabular}{c|c|c}
\hline
$n \pmod{24}$ & $r$ odd &  $r$ even \\
\hline
$0$ &  $(2^{rn}+1)-(2^r-1)2^{r(n+2)/2+1}$  &  $(2^{rn}+1)-(2^r-1)2^{r(n+2)/2+1}$  \\
$1$ &  $(2^{rn}+1)+(2^r-1)2^{r(n+1)/2}$ &  $(2^{rn}+1)+(2^r-1)2^{r(n+1)/2}$ \\
$2$ & $(2^{rn}+1)+(2^r-1)2^{r(n+2)/2}$ & $(2^{rn}+1)+(2^r-1)2^{r(n+2)/2}$\\
$3$ & $(2^{rn}+1)-(2^r-1)2^{r(n+1)/2}$ &  $(2^{rn}+1)-(2^r-1)2^{r(n+1)/2}$ \\
$4$ &  $2^{rn}+1$  &  $2^{rn}+1$    \\	
$5$ &   $(2^{rn}+1)-(2^r-1)2^{r(n+1)/2}$ &  $(2^{rn}+1)+(2^r-1)2^{r(n+1)/2}$  \\
$6$ & $(2^{rn}+1)+(2^r-1)2^{r(n+2)/2}$   &  $(2^{rn}+1)-(2^r-1)2^{r(n+2)/2}$      \\
$7$ & $(2^{rn}+1)+(2^r-1)2^{r(n+1)/2}$ &  $(2^{rn}+1)+(2^r-1)2^{r(n+1)/2}$  \\
$8$ & $(2^{rn}+1)-(2^r-1)2^{r(n+2)/2+1}$  &  $2^{rn}+1$   \\
$9$ &  $(2^{rn}+1)+(2^r-1)2^{r(n+1)/2}$ &  $(2^{rn}+1)-(2^r-1)2^{r(n+1)/2}$  \\
$10$ &  $(2^{rn}+1)+(2^r-1)2^{r(n+2)/2}$   &  $(2^{rn}+1)+(2^r-1)2^{r(n+2)/2}$      \\
$11$ & $(2^{rn}+1)-(2^r-1)2^{r(n+1)/2}$ &  $(2^{rn}+1)+(2^r-1)2^{r(n+1)/2}$ \\
$12$ & $2^{rn}+1$  &  $2^{rn}+1-(2^r-1)2^{r(n+2)/2+1}$   \\
$13$ &   $(2^{rn}+1)-(2^r-1)2^{r(n+1)/2}$ &  $(2^{rn}+1)+(2^r-1)2^{r(n+1)/2}$  \\
$14$ &  $(2^{rn}+1)+(2^r-1)2^{r(n+2)/2}$   &  $(2^{rn}+1)+(2^r-1)2^{r(n+2)/2}$      \\
$15$ &  $(2^{rn}+1)+(2^r-1)2^{r(n+1)/2}$ &  $(2^{rn}+1)-(2^r-1)2^{r(n+1)/2}$ \\
$16$ &  $(2^{rn}+1)-(2^r-1)2^{r(n+2)/2+1}$  &  $2^{rn}+1$   \\
$17$ &   $(2^{rn}+1)+(2^r-1)2^{r(n+1)/2}$ &  $(2^{rn}+1)+(2^r-1)2^{r(n+1)/2}$ \\
$18$ &  $(2^{rn}+1)+(2^r-1)2^{r(n+2)/2}$   &  $(2^{rn}+1)-(2^r-1)2^{r(n+2)/2}$     \\
$19$ &  $(2^{rn}+1)-(2^r-1)2^{r(n+1)/2}$ &  $(2^{rn}+1)+(2^r-1)2^{r(n+1)/2}$  \\
$20$ &  $2^{rn}+1$  &  $2^{rn}+1$   \\
$21$ &  $(2^{rn}+1)-(2^r-1)2^{r(n+1)/2}$ &  $(2^{rn}+1)-(2^r-1)2^{r(n+1)/2}$\\
$22$ &  $(2^{rn}+1)+(2^r-1)2^{r(n+2)/2}$   &  $(2^{rn}+1)+(2^r-1)2^{r(n+2)/2}$     \\
$23$ &  $(2^{rn}+1)+(2^r-1)2^{r(n+1)/2}$ &  $(2^{rn}+1)+(2^r-1)2^{r(n+1)/2}$  \\
\hline
\end{tabular}
\end{center}
\end{table}

\begin{table}[h]
\caption{The number of $\F_{2^{rn}}$-rational points on $C_3:y^{2^r}+y=q_3(x)$ when $x^3+x+\alpha^{-1}$ has no root in $\mathbb F_{2^r}$}
\begin{center}\label{table:C3pointsnoroots}
\begin{tabular}{c|c|c}
\hline
$n \pmod{24}$ & $r$ odd &  $r$ even \\
\hline
$0$ & $(2^{rn}+1)-2^{r(n+2)/2+1}$  &  $(2^{rn}+1)-2^{r(n+2)/2+1}$  \\
$1$ & $(2^{rn}+1)+2^{r(n+1)/2}$ & 	$(2^{rn}+1)+2^{r(n+1)/2}$ \\
$2$ & $2^{rn}+1$ &  $2^{rn}+1$   \\
$3$ &    $(2^{rn}+1)+2^{r(n+1)/2+1}$ & 	$(2^{rn}+1)-2^{r(n+1)/2+1}$ \\
$4$ &   $(2^{rn}+1)-2^{r(n+2)/2}$ & 		$(2^{rn}+1)+2^{r(n+2)/2}$ \\	
$5$ &   $(2^{rn}+1)-2^{r(n+1)/2}$ & 		$(2^{rn}+1)+2^{r(n+1)/2}$ \\
$6$  &  $2^{rn}+1$   &  $2^{rn}+1$   \\
$7$ &  $(2^{rn}+1)+2^{r(n+1)/2}$ &  $(2^{rn}+1)+2^{r(n+1)/2}$  \\
$8$  &   $(2^{rn}+1)+2^{r(n+2)/2}$ &  $(2^{rn}+1)+2^{r(n+2)/2}$    \\
$9$ &   $(2^{rn}+1)-2^{r(n+1)/2+1}$  & $(2^{rn}+1)-2^{r(n+1)/2+1}$ \\
$10$  &  $2^{rn}+1$   &  $2^{rn}+1$    \\
$11$ &   $(2^{rn}+1)-2^{r(n+1)/2}$ &$(2^{rn}+1)+2^{r(n+1)/2}$  \\
$12$    &  $(2^{rn}+1)+2^{r(n+2)/2+1}$  & $(2^{rn}+1)-2^{r(n+2)/2+1}$  \\
$13$ &  $(2^{rn}+1)-2^{r(n+1)/2}$ &  $(2^{rn}+1)+2^{r(n+1)/2}$ \\
$14$  &  $2^{rn}+1$   &  $2^{rn}+1$     \\
$15$ &   $(2^{rn}+1)-2^{r(n+1)/2+1}$ & $(2^{rn}+1)-2^{r(n+1)/2+1}$   \\
$16$  &  $(2^{rn}+1)+2^{r(n+2)/2}$ &  $(2^{rn}+1)+2^{r(n+2)/2}$  \\
$17$ &  $(2^{rn}+1)+2^{r(n+1)/2}$ &  $(2^{rn}+1)+2^{r(n+1)/2}$  \\
$18$  &  $2^{rn}+1$   &  $2^{rn}+1$     \\
$19$ &  $(2^{rn}+1)-2^{r(n+1)/2}$ &  $(2^{rn}+1)+2^{r(n+1)/2}$   \\
$20$  &   $(2^{rn}+1)-2^{r(n+2)/2}$ & $(2^{rn}+1)+2^{r(n+2)/2}$    \\
$21$ & $(2^{rn}+1)+2^{r(n+1)/2+1}$ & $(2^{rn}+1)-2^{r(n+1)/2+1}$ \\
$22$  &  $2^{rn}+1$   &  $2^{rn}+1$    \\
$23$ &  $(2^{rn}+1)+2^{r(n+1)/2}$ &  $(2^{rn}+1)+2^{r(n+1)/2}$  \\
\hline
\end{tabular}
\end{center}
\end{table}

\begin{table}[h]
\caption{The number of $\F_{2^{rn}}$-rational points on $C_3:y^{2^r}+y=q_3(x)$}
\begin{center}\label{table:C3points}
\begin{tabular}{c|c|c}
\hline
$n \pmod{24}$ & $r$ odd &  $r$ even \\
\hline
$0$  &  $(2^{rn}+1)-(2^r-1)2^{r(n+2)/2+1}$  &  $(2^{rn}+1)-(2^r-1)2^{r(n+2)/2+1}$  \\
$1$ &  $(2^{rn}+1)+(2^r-1)2^{r(n+1)/2}$ &  $(2^{rn}+1)+(2^r-1)2^{r(n+1)/2}$ \\
$2$  &  $2^{rn}+1$ & $2^{rn}+1$\\
$3$ &  $(2^{rn}+1)+2^{r(n+1)/2+1}$ &  $2^{rn}+1$ \\
$4$  &  $(2^{rn}+1)-2^{r(n+2)/2}$  &  $(2^{rn}+1)+2^{r(n+2)/2}$    \\	
$5$ &  $(2^{rn}+1)-(2^r-1)2^{r(n+1)/2}$ &  $(2^{rn}+1)+(2^r-1)2^{r(n+1)/2}$  \\
$6$  &  $2^{rn}+1$   &  $2^{rn}+1$   \\
$7$ &  $(2^{rn}+1)+(2^r-1)2^{r(n+1)/2}$ &  $(2^{rn}+1)+(2^r-1)2^{r(n+1)/2}$  \\
$8$  &  $(2^{rn}+1)-(2^r-3)2^{r(n+2)/2}$  &  $(2^{rn}+1)-(2^r-1)2^{r(n+2)/2}$   \\
$9$ &  $(2^{rn}+1)-2^{r(n+1)/2+1}$ &  $2^{rn}+1$  \\
$10$  &  $2^{rn}+1$   &  $2^{rn}+1$      \\
$11$ &  $(2^{rn}+1)-(2^r-1)2^{r(n+1)/2}$ &  $(2^{rn}+1)+(2^r-1)2^{r(n+1)/2}$ \\
$12$  &  $(2^{rn}+1)+2^{r(n+4)/2}$  &  $2^{rn}+1-(2^r-2)2^{r(n+2)/2}$   \\
$13$ &  $(2^{rn}+1)-(2^r-1)2^{r(n+1)/2}$ &  $(2^{rn}+1)+(2^r-1)2^{r(n+1)/2}$  \\
$14$  &  $2^{rn}+1$   &  $2^{rn}+1$      \\
$15$ &  $(2^{rn}+1)-2^{r(n+1)/2+1}$ &  $2^{rn}+1$ \\
$16$  &  $(2^{rn}+1)-(2^r-3)2^{r(n+2)/2}$  &  $(2^{rn}+1)-(2^r-1)2^{r(n+2)/2}$   \\
$17$ &  $(2^{rn}+1)+(2^r-1)2^{r(n+1)/2}$ &  $(2^{rn}+1)+(2^r-1)2^{r(n+1)/2}$ \\
$18$  &  $2^{rn}+1$   &  $2^{rn}+1$     \\
$19$ &  $(2^{rn}+1)-(2^r-1)2^{r(n+1)/2}$ &  $(2^{rn}+1)+(2^r-1)2^{r(n+1)/2}$  \\
$20$  &  $(2^{rn}+1)-2^{r(n+2)/2}$  &  $(2^{rn}+1)+2^{r(n+2)/2}$   \\
$21$ &$(2^{rn}+1)+2^{r(n+1)/2+1}$ &  $2^{rn}+1$\\
$22$  &  $2^{rn}+1$   &  $2^{rn}+1$     \\
$23$ &  $(2^{rn}+1)+(2^r-1)2^{r(n+1)/2}$ &  $(2^{rn}+1)+(2^r-1)2^{r(n+1)/2}$  \\
\hline
\end{tabular}
\end{center}
\end{table}

\begin{table}[h]
\caption{$F_q(n,0,0,0)$}
\begin{center}\label{table:Fqn000}
\begin{tabular}{c|c|c}
\hline
$n \pmod{24}$ &  $r$ odd &  $r$ even \\
\hline
$0$  &  $q^{n-3} -(q-1)(2q+1)q^{n/2-2}$   &  $q^{n-3} -(q-1)(2q+1)q^{n/2-2}$ \\
$1$ &  $q^{n-3} + (q^{2}-1)q^{(n-1)/2-2}$ &  $q^{n-3} + (q^{2}-1)q^{(n-1)/2-2}$\\
$2$  &  $q^{n-3} + (q-1)q^{n/2-2}$ &  $q^{n-3} + (q-1)q^{n/2-2}$\\
$3$ &  $q^{n-3}$  &  $q^{n-3}$  \\
$4$  &  $q^{n-3}$   &  $q^{n-3}$  \\	
$5$ &  $q^{n-3} -(q^{2}-1)q^{(n-1)/2-2}$  &  $q^{n-3} + (q^{2}-1)q^{(n-1)/2-2}$ \\
$6$  & $q^{n-3} + (q-1)q^{n/2-2}$  & $q^{n-3} -(q-1)q^{n/2-2}$ \\
$7$ &  $q^{n-3} + (q^{2}-1)q^{(n-1)/2-2}$  &  $q^{n-3} + (q^{2}-1)q^{(n-1)/2-2}$ \\
$8$  &  $q^{n-3}-(q-1)q^{n/2-1}$  &  $q^{n-3}-(q-1)q^{n/2-1}$  \\
$9$ &  $q^{n-3}$  &  $q^{n-3}$  \\
$10$  &  $q^{n-3} + (q-1)q^{n/2-2}$ &  $q^{n-3} + (q-1)q^{n/2-2}$  \\
$11$ &  $q^{n-3} -(q^{2}-1)q^{(n-1)/2-2}$ &  $q^{n-3} + (q^{2}-1)q^{(n-1)/2-2}$ \\
$12$  &  $q^{n-3} + (q^{2}-1)q^{n/2-2}$  &  $q^{n-3} - (q^{2}-1)q^{n/2-2}$   \\
$13$ &  $q^{n-3} - (q^{2}-1)q^{(n-1)/2-2}$ &  $q^{n-3} + (q^{2}-1)q^{(n-1)/2-2}$ \\
$14$  &  $q^{n-3} + (q-1)q^{n/2-2}$   &  $q^{n-3} + (q-1)q^{n/2-2}$    \\
$15$ &  $q^{n-3}$ &  $q^{n-3}$ \\
$16$  &  $q^{n-3}-(q-1)q^{n/2-1}$ &  $q^{n-3}-(q-1)q^{n/2-1}$    \\
$17$ &  $q^{n-3} + (q^{2}-1)q^{(n-1)/2-2}$  &  $q^{n-3} + (q^{2}-1)q^{(n-1)/2-2}$ \\
$18$  & $q^{n-3} + (q-1)q^{n/2-2}$  & $q^{n-3} - (q-1)q^{n/2-2}$  \\
$19$ &  $q^{n-3} - (q^{2}-1)q^{(n-1)/2-2}$ &  $q^{n-3} + (q^{2}-1)q^{(n-1)/2-2}$   \\
$20$  &  $q^{n-3}$  &  $q^{n-3}$  \\
$21$ &  $q^{n-3}$  &  $q^{n-3}$\\
$22$  &  $q^{n-3} + (q-1)q^{n/2-2}$  &  $q^{n-3} + (q-1)q^{n/2-2}$     \\
$23$ &  $q^{n-3} + (q^{2}-1)q^{(n-1)/2-2}$ &  $q^{n-3} + (q^{2}-1)q^{(n-1)/2-2}$ \\
\hline
\end{tabular}
\end{center}
\end{table}

\end{document}